\definecolor{ttzzqq}{rgb}{0.2,0.6,0}
\definecolor{qqttcc}{rgb}{0,0.2,0.8}
\definecolor{ffxfqq}{rgb}{1,0.5,0}
\definecolor{qqttzz}{rgb}{0,0.2,0.6}
\definecolor{ffqqqq}{rgb}{1,0,0}
\definecolor{qqwuqq}{rgb}{0,0.39,0}
\definecolor{zzttqq}{rgb}{0.6,0.2,0}
\definecolor{qqqqff}{rgb}{0,0,1}
\definecolor{ttttqq}{rgb}{0.2,0.2,0}
\definecolor{qqwwtt}{rgb}{0,0.4,0.2}
\definecolor{ubqqys}{rgb}{0.29,0,0.51}
\definecolor{wwttqq}{rgb}{0.4,0.2,0}
\definecolor{uuuuuu}{rgb}{0.27,0.27,0.27}
\definecolor{qqzzff}{rgb}{0,0.6,1}
\definecolor{xdxdff}{rgb}{0.49,0.49,1}
\definecolor{ccwwqq}{rgb}{0.8,0.4,0}
\definecolor{ttqqqq}{rgb}{0.2,0,0}
\definecolor{qqzzcc}{rgb}{0,0.6,0.8}
\newcommand{\xx}{\mathbf{x}}
\newcommand{\R}{\mathbb{R}}
\newcommand{\eref}[1]{$(\ref{#1})$}
\newcommand{\p}{\mathbb{\wp}}
\newcommand{\np}{p}
\newcommand{\nextud}{\vec{n}_{ij}}
\newcommand{\nstd}{\vec{n}_{j}}
\newcommand{\etah}{ \hat{\bm{p}}}
\newcommand{\bphi}{ \bm{\phi}}
\newcommand{\bpsi}{ \bm{\psi}}
\newcommand{\vvh}{\hat{\bm{\mathbf{v}}}}
\newcommand{\Fvh}{\widehat{\bm{F\mathbf{v}}}}
\newcommand{\nv}{\vec{n}}
\newcommand{\B}{\mathcal{B}}
\newcommand{\TF}{F}
\newcommand{\TT}{\mbox{\boldmath$T$}}
\newcommand{\QQ}{\mbox{\boldmath$H$}}
\newcommand{\poh}{+\frac{1}{2}}
\newcommand{\Ni}{N_e}
\newcommand{\Nj}{N_d}
\newcommand{\st}{ {st}}
\newcommand{\Ss}{ \bm{\mathcal{S}}}
\newcommand{\D}{\bm{\mathcal{D}}}
\newcommand{\Q}{\bm{\mathcal{Q}}}
\newcommand{\RM}{\bm{\mathcal{R}}}
\newcommand{\LM}{\bm{\mathcal{L}}}
\newcommand{\Mpsi}{\bm{M}}
\newcommand{\diff}[2]{\frac{\partial {#1} }{\partial {#2} } }
\newcommand{\vbar}{\overline{\mathbf{v}}}
\newtheorem{Corollary}{Corollary}
\journal{Journal of Computational Physics}
\begin{document}

\begin{frontmatter}



\title{A staggered space-time discontinuous Galerkin method for the three-dimensional incompressible Navier-Stokes equations on unstructured tetrahedral meshes}

\author[1]{Maurizio Tavelli\fnref{label1}}
\author[2]{Michael Dumbser \corref{corr1} \fnref{label2}}
\address[1]{Department of Mathematics, University of Trento, \\ Via Sommarive 14, I-38050 Trento, Italy}
\address[2]{Laboratory of Applied Mathematics, Department of Civil, Environmental and Mechanical Engineering,
          					   University of Trento, Via Mesiano 77, I-38123 Trento, Italy}

\fntext[label1]{\tt m.tavelli@unitn.it (M.~Tavelli)}
\fntext[label2]{\tt michael.dumbser@unitn.it (M.~Dumbser)}
\begin{abstract}
In this paper we propose a novel arbitrary high order accurate semi-implicit \textit{space-time} discontinuous Galerkin method for the solution of the three-dimensional incompressible Navier-Stokes  equations on \textit{staggered} unstructured curved tetrahedral meshes. As typical for space-time DG schemes, the discrete solution is represented in terms of space-time basis functions. This 
allows to achieve very high order of accuracy also in time, which is not easy to obtain for the incompressible Navier-Stokes equations. Similar to staggered finite difference schemes, in our 
approach the  discrete \textit{pressure} is defined on the \textit{primary} tetrahedral grid, while the discrete \textit{velocity} is defined on a face-based staggered \textit{dual} grid.  
While staggered meshes are state of the art in classical finite difference schemes for the incompressible Navier-Stokes equations, their use in high order DG schemes still quite rare. 
A very simple and efficient Picard iteration is used in order to derive a space-time pressure correction algorithm that achieves also high order of accuracy in time and that avoids the direct 
solution of global nonlinear systems. 
Formal substitution of the discrete momentum equation on the dual grid into the discrete continuity equation on the primary grid yields a very sparse five-point block system for the scalar 
pressure, which is conveniently solved with a matrix-free GMRES algorithm. From numerical experiments we find that the linear system seems to be reasonably well conditioned, since all simulations 
shown in this paper could be run without the use of any preconditioner, even up to very high polynomial degrees. For a piecewise constant polynomial approximation in time and if pressure boundary  conditions 
are specified at least in one point, the resulting system is, in addition, symmetric and positive definite. This allows us to use even faster iterative solvers, like the conjugate gradient method. 

The flexibility and accuracy of high order space-time DG methods on curved unstructured meshes allows to discretize even complex physical  domains with very coarse grids in both, space and time. The proposed method is verified for approximation polynomials of degree up to four in space and time by solving a series of typical 3D test problems and by comparing the obtained numerical 
results with available exact analytical solutions, or with other numerical or experimental reference data. 

To the knowledge of the authors, this is the first time that a \textit{space-time} discontinuous Galerkin finite element method is presented for the three-dimensional incompressible Navier-Stokes  equations on \textit{staggered} unstructured tetrahedral grids. 
\end{abstract}

\begin{keyword}
high order schemes \sep
space-time discontinuous Galerkin finite element schemes \sep 
staggered unstructured meshes \sep
space-time pressure correction algorithm \sep
incompressible Navier-Stokes equations in 3D 


\end{keyword}

\end{frontmatter}


\section{Introduction}
The numerical solution of the three dimensional incompressible Navier-Stokes equations represents a very important and challenging research topic, both from a numerical and from an application point of 
view. 
In the literature, there are many different approaches that have been proposed for the solution of the incompressible Navier-Stokes equations, for example using classical finite difference methods 
\cite{markerandcell,patankarspalding,patankar,vanKan} or continuous finite element schemes \cite{TaylorHood,SUPG,SUPG2,Fortin,Verfuerth,Rannacher1,Rannacher3}. 
Very recently, also different high order discontinuous Galerkin (DG) methods have been presented for the solution of the incompressible and the compressible Navier-Stokes equations. 
The first DG schemes that were able to solve the Navier-Stokes equations were those of Bassi and Rebay \cite{BassiRebay} and Baumann and Oden \cite{BaumannOden1,BaumannOden2}. 
Many other methods have been presented in the meantime, see for example \cite{BassiRebay,BaumannOden1,BaumannOden2,Bassi2006,Bassi2007,MunzDiffusionFlux,stedg2,ADERNSE,HartmannHouston1,HartmannHouston2,Shahbazi2007,LuoNSE,Ferrer2011,Nguyen2011,Crivellini2013,KleinKummerOberlack2013} for a non-exhaustive overview of the ongoing research in this very active field. 
In most DG schemes, the DG discretization is only used for space discretization, while the time discretization uses standard explicit or implicit time integrators known for ordinary 
differential equations, following the so-called method of lines approach. The method 
of lines has also been used by Cockburn and Shu in their well-known series of papers \cite{cbs2,cbs3,cbs4} on DG schemes for time-dependent nonlinear hyperbolic systems. 
In contrast to the method of lines approach, the family of space-time discontinuous Galerkin finite element schemes, which was introduced for the first time by van der Vegt et al. in 
\cite{spacetimedg1,spacetimedg2,KlaijVanDerVegt}, treats space and time in a unified manner. This is achieved by using test and basis functions that depend on both space and time, 
see \cite{Rhebergen2012,Rhebergen2013,Balazsova1,Balazsova2,DumbserFacchini,2STINS,Wang2015} for an overview of recent results. For a very early implementation of continuous 
space-time finite element schemes, the reader is also referred to \cite{OdenSTFEM}. 

From an application point of view, it is very important to consider the fully three-dimensional Navier-Stokes equations, in order to capture the relevant flow features that are observed 
in laboratory experiments, see \cite{Rehimi2008,Williamson1988,Sakamoto1990, Armaly1983}. This means that the use of a two-dimensional algorithm is in most cases inappropriate to 
reproduce the results of physical experiments, even for geometries that can be considered essentially two-dimensional. The importance of fully three-dimensional computations has been 
shown, for example, in \cite{Armaly1983,Hwar1987,Aidun1991,Brachet1983,Kanaris2011,Ribeiro2012,Arthur2000}. 
Unfortunately, the mesh generation for complex and realistic 3D geometries is still nowadays quite difficult, and the computational cost of a fully three-dimensional simulation grows 
very quickly with increasing mesh resolution. In this context, it becomes crucial to use unstructured simplex meshes, since they help to simplify the process of mesh generation significantly
compared to unstructured hexahedral meshes. Furthermore, it is at the same time also crucial to use very high order accurate methods in both space and time, since they allow to reduce the 
total number of elements significantly, compared to low order methods, while keeping at the same time a high level of accuracy of the numerical solution. 
Since the solution of the incompressible Navier-Stokes equations requires necessarily the solution of large systems of algebraic equations, it is indeed very important 
to derive a scheme that uses a stencil that is as small as possible, in order to improve the sparsity pattern of the resulting system matrices. It is also desirable to design methods that
lead to reasonably well conditioned systems that can be solved with iterative solvers, like the conjugate gradient method \cite{cgmethod} or the GMRES algorithm \cite{GMRES}. 

For structured grids, numerical schemes can be usually derived rather easily in multiple space dimensions, thanks to the particular regularity of the mesh. On the contrary, the development
of numerical schemes on general unstructured meshes in three space dimensions is not as straightforward and requires some care in the derivation and the implementation of the method. 
Particular difficulties of the incompressible Navier-Stokes equations arise from their nonlinearity and from the elliptic nature of the Poisson equation for the pressure, that is also
obtained on the discrete level when substituting the momentum equation into the discrete continuity equation. 
A unified analysis of several variants of the DG method applied to an elliptic model problem has been provided by Arnold et al. in \cite{ArnoldBrezzi}. 

While the use of \textit{staggered grids} is a very common practice in the finite difference community, its use is not so widespread in the context of high order DG schemes. 
The first staggered DG schemes, based on a \textit{vertex-based} dual grid, have been proposed in  \cite{CentralDG1,CentralDG2}. Other recent high order staggered DG schemes that 
use an \textit{edge-based} dual grid have been forwarded in \cite{StaggeredDGCE1,StaggeredDG,DumbserCasulli}. 
The advantage in using edge-based staggered grids is that they allow to improve significantly the sparsity pattern of the final linear system that has to be solved for the pressure.  
Very recently, a new family of \textit{staggered} semi-implicit DG schemes for the solution of the two dimensional shallow water equations was presented by Dumbser \&  Casulli 
\cite{DumbserCasulli} and Tavelli \& Dumbser \cite{2DSIUSW}. Subsequently, these semi-implicit staggered DG schemes have been successfully extended also to the two-dimensional 
incompressible Navier-Stokes equations by Tavelli \& Dumbser in \cite{2SINS,2STINS}. One year later, a staggered DG formulation for the 2D incompressible Navier-Stokes equations 
has been reproposed independently also in \cite{ChungNS}. 
Alternative semi-implicit discontinuous Galerkin schemes on \textit{collocated grids} have been presented, for example, in \cite{Dolejsi1,Dolejsi2,Dolejsi3,GiraldoRestelli,TumoloBonaventuraRestelli}. These semi-implicit schemes try to combine the simplicity of explicit methods for nonlinear PDE with the stability and efficiency of implicit time discretizations. 

In this paper we propose a new, arbitrary high order accurate \textit{staggered} space-time discontinuous Galerkin finite element method for the solution of the three-dimensional 
incompressible Navier-Stokes equations on \textit{curved} unstructured tetrahedral meshes, following some of the ideas outlined in \cite{2STINS} for the two-dimensional case. 
For that purpose we mimic the philosophy of staggered semi-implicit finite difference schemes, such as discussed and analyzed in 
 \cite{markerandcell,patankarspalding,patankar,vanKan,HirtNichols,CasulliCheng1992,Casulli1999,CasulliWalters2000,Casulli2009,CasulliVOF,BrugnanoCasulli,BrugnanoCasulli2, 
CasulliZanolli2010,CasulliZanolli2012}, where the discrete pressure field is defined on the primary grid, while the discrete velocity field is defined on an edge-based staggered 
dual grid. 

For the staggered space-time DG scheme proposed in this paper, we use a \textit{primal mesh} composed of (curved) tetrahedral elements, and a face-based staggered \textit{dual mesh} that 
consists of non-standard five-point hexahedral elements that are obtained by connecting the three nodes of a face of the primal mesh with the barycenters of the two tetrahedra that share
the common face. The face-based dual grid used here corresponds to the choice made also in \cite{Bermudez1998,Bermudez2014,USFORCE,StaggeredDG}. These spatial elements are then extended 
to space-time control volumes using a simple tensor product in the time direction. 

Since all quantities are readily defined where they are needed, the staggered DG scheme does \textit{not} require the use of Riemann solvers (numerical flux functions), apart from the 
nonlinear convective terms, which are treated in a standard way. For the convective part of the incompressible Navier-Stokes equations, we use a standard DG scheme for hyperbolic PDE
on the main grid, based on the local Lax-Friedrichs (Rusanov) flux \cite{Rusanov:1961a}. For that purpose, the velocity field is first interpolated from the dual grid to the main grid, 
as suggested in \cite{DumbserCasulli}. This allows us to use the same staggered space-time DG scheme \textit{again} to discretize also the viscous terms, where now the 
\textit{velocity gradient} that is needed for the evaluation of the viscous fluxes is computed on the face-based \textit{staggered} dual grid. In this way, we can avoid again the use of
numerical flux functions for the viscous fluxes, and furthermore, the structure of the resulting linear systems for the viscous terms is very similar to the pressure system. 

The discrete momentum equation is then inserted into the discrete continuity equation in order to obtain the discrete form of the pressure Poisson equation. Thanks to the use of a 
\textit{staggered grid}, this leads to a very sparse five-point block system, with the scalar pressure as the \textit{only} unknown quantity. Note that the \textit{same algorithm} on a 
\textit{collocated grid} would produce a \textbf{17-point} stencil, since it would also involve neighbors of neighbors\footnote{The discrete continuity equation of a DG scheme on a
collocated grid involves the velocity in the element itself and in its four neighbor elements, due to the numerical flux on the element boundaries. Furthermore, in the discrete 
momentum equation the velocity field in each tetrahedral element depends on the pressure in the cell itself and in its four neighbors. Inserting now the momentum equation into the 
continuity equation on the discrete level involves a total of $1+4+4\cdot 3 = 17$ elements for the pressure! \\ On a staggered mesh instead, the discrete continuity equation involves only the velocities of the four
dual elements associated with the faces of the primary element. The discrete momentum equation written on the face-based dual grid only involves the pressure of the \textit{two} tetrahedra that share 
the common face. Hence, substituting the momentum equation into the continuity equation leads to a $1+4=5$ point stencil for the pressure, which involves only the element and its four neighbors.}. On the other hand, if one does not substitute the momentum equation 
into the continuity equation on a collocated grid, one could still obtain a five point stencil, but with the pressure \textit{and} the velocity vector as unknowns, hence the final 
system to solve is four times larger than the corresponding system of our staggered DG scheme. It is therefore very clear that even in the DG context, the use of a staggered mesh is 
very beneficial, since it allows to produce a linear system with the smallest possible stencil and with the smallest number of unknowns, compared to similar approaches on a collocated 
mesh. 

Once the new pressure field is known, the velocity vector field can subsequently be updated directly. A very simple Picard iteration that embraces the entire scheme in each time step is 
used in order to achieve arbitrary high order of accuracy in time also for the nonlinear convective and viscous terms, without introducing a nonlinearity in the system for the pressure. 

The rest of the paper is organized as follows: in Section \ref{sec_1} we derive and present the new numerical method method. Section \ref{sec_nlcd} contains the details about the discretization 
of the nonlinear convective terms on the main grid, while the velocity gradients for the viscous terms are discretized again on the face-based staggered dual mesh. 
In Section \ref{sec.CNmethod} we discuss the important special case 
of a high order DG discretization in space, while using only a piecewise constant polynomial approximation in time, leading to symmetric positive definite systems for the pressure and the 
viscous terms. 
Finally, in Section \ref{sec_numres} the new numerical scheme proposed in this paper is run on a set of 3D benchmark problems, 
comparing the numerical results either with existing analytical or numerical reference solutions, or with available experimental results. The paper closes with some concluding remarks provided 
in Section \ref{sec.concl}.

\section{Staggered space-time DG scheme for the 3D incompressible Navier-Stokes equations} 
\label{sec_1}
\subsection{Governing equations}
The three-dimensional incompressible Navier-Stokes equations can be written as 
\begin{eqnarray}
    \frac{\partial \mathbf{v}}{\partial t}+\nabla \cdot \mathbf{F}_c + \nabla p= \nabla \cdot \left( \nu \nabla \mathbf{v} \right) + \mathbf{S} \label{eq:CS_2_2_0}, \\
    \nabla \cdot \mathbf{v}=0 \label{eq:CS_2},
\end{eqnarray}
where $\mathbf{x}=(x,y,z)$ is the vector of spatial coordinates and $t$ denotes the time; $p=P/\rho$ indicates the normalized fluid pressure; $P$ is the physical pressure and $\rho$ is the constant fluid density; $\nu=\mu / \rho$ is the kinematic viscosity  coefficient; $\mathbf{v}=(u,v,w)$ is the velocity vector; $u$, $v$ and $w$ are the velocity components in the $x$, $y$ and $z$ direction, respectively;
$\mathbf{S}=\mathbf{S}(\mathbf{x},t)$ is a vector of given source terms;  
$\mathbf{F}_c=\mathbf{v} \otimes \mathbf{v}$ is the flux tensor of the nonlinear convective terms, namely:
$$ \mathbf{F}_c=\left(\begin{array}{ccc} uu & uv & uw \\ vu & vv & vw\\ wu & wv & ww \end{array} \right). $$

The viscosity term can be grouped with the nonlinear convective term, i.e. the momentum Eq. \eref{eq:CS_2_2_0} then reads 
\begin{equation}
	\frac{\partial \mathbf{v}}{\partial t}+\nabla \cdot \mathbf{\TF} + \nabla p = \mathbf{S}, 
\label{eq:CS_2_2}
\end{equation}
where $\mathbf{\TF}=\mathbf{\TF}(\mathbf{v},\nabla \mathbf{v})=\mathbf{F}_c(\mathbf{v})-\nu \nabla \mathbf{v}$ is the nonlinear flux tensor that depends on the velocity and its gradient. 

\subsection{Staggered unstructured mesh and associated space-time basis functions}
Throughout this paper we use a main grid that is composed of (eventually curved) tetrahedral simplex elements, and a staggered face-based dual grid, consisting in non-standard five-point hexahedral  elements. These spatial control volumes are then extended to space-time control volumes using a tensor product in time direction. In the following, the staggered mesh in space is 
described in detail and is subsequently also extended to the time direction. The main notation is taken as the one presented for the two dimensional method proposed in \cite{2STINS} and is 
summarized here for the three dimensional case. 

\subsubsection{Staggered space-time control volumes }
The spatial computational domain $\Omega$ is covered with a set of $\Ni$ non-overlapping tetrahedral elements $\TT_i$ with $i=1 \ldots \Ni$. By denoting with $\Nj$ the total number of faces, the 
$j-$th face will be called $\Gamma_j$. $\B(\Omega)$ denotes the set of indices $j$ corresponding to boundary faces. 
The indices of the four faces of each tetrahedron $\TT_i$ constitute the set $S_i$ defined by $S_i=\{j \in [1,\Nj] \,\, | \,\, \Gamma_j \mbox{ is a face of }\TT_i \}$. For every $j\in [1\ldots \Nj]-\B(\Omega)$ there  exist two tetrahedra that share a common face $\Gamma_j$. We assign arbitrarily a left and a right element, called $\TT_{\ell(j)}$ and $\TT_{r(j)}$, respectively. The standard positive direction is  assumed to be from left to right. Let $\nv_{j}$ denote the unit normal vector defined on the face number $j$ and that is oriented with respect to the positive direction from left to right. For every  tetrahedral element number $i$ and face number $j \in S_i$, the index of the neighbor tetrahedron that shares the common face $\Gamma_j$ is denoted by $\p(i,j)$.
\par For every $j\in [1, \Nj]-\B(\Omega)$ the dual element (a non-standard 5-point hexahedron) associated with $\Gamma_j$ is called $\QQ_j$ and it is defined by the two centers of gravity of 
$\TT_{\ell(j)}$ and $\TT_{r(j)}$ and the three vertices of $\Gamma_j$, see also \cite{Bermudez1998,USFORCE,2DSIUSW}. We denote by $\TT_{i,j}=\QQ_j \cap \TT_i$ the intersection element for every 
$i$ and $j \in S_i$. Figures $\ref{fig.1}$ and $\ref{fig.1.1}$ summarize the notation used on the main tetrahedral mesh and on the associated dual grid.
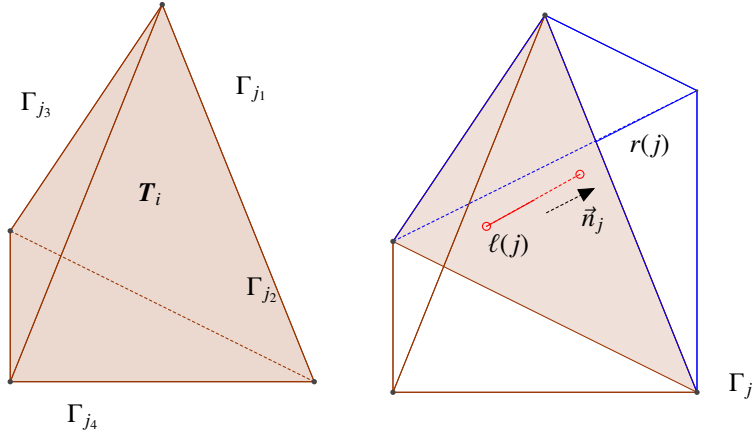
\begin{figure}[ht]
    \begin{center}
    \begin{tikzpicture}[line cap=round,line join=round,>=triangle 45,x=1.0cm,y=1.0cm]
\clip(3.26,-5.21) rectangle (8.37,1.57);
\fill[color=zzttqq,fill=zzttqq,fill opacity=0.1] (4,-4) -- (8,-4) -- (4,-2) -- cycle;
\fill[color=zzttqq,fill=zzttqq,fill opacity=0.1] (4,-2) -- (6,1) -- (8,-4) -- cycle;
\fill[color=zzttqq,fill=zzttqq,fill opacity=0.1] (4,-2) -- (6,1) -- (4,-4) -- cycle;
\fill[color=zzttqq,fill=zzttqq,fill opacity=0.1] (6,1) -- (4,-4) -- (8,-4) -- cycle;
\fill[color=zzttqq,fill=zzttqq,fill opacity=0.1] (16,-4) -- (20,-4) -- (16,-2) -- cycle;
\fill[color=zzttqq,fill=zzttqq,fill opacity=0.1] (16,-2) -- (18,1) -- (20,-4) -- cycle;
\fill[color=zzttqq,fill=zzttqq,fill opacity=0.1] (18,1) -- (16,-4) -- (20,-4) -- cycle;
\fill[color=zzttqq,fill=zzttqq,fill opacity=0.1] (16,-2) -- (12.56,-0.2) -- (16,-4) -- cycle;
\fill[color=qqqqff,fill=qqqqff,fill opacity=0.1] (12.56,-0.2) -- (15.16,-1.08) -- (16,-4) -- cycle;
\fill[color=qqqqff,fill=qqqqff,fill opacity=0.1] (15.16,-1.08) -- (16,-2) -- (12.56,-0.2) -- cycle;
\fill[color=qqqqff,fill=qqqqff,fill opacity=0.1] (15.16,-1.08) -- (16,-4) -- (16,-2) -- cycle;
\fill[color=qqqqff,fill=qqqqff,fill opacity=0.1] (12.56,-0.2) -- (13.78,-2.9) -- (16,-4) -- cycle;
\fill[color=qqqqff,fill=qqqqff,fill opacity=0.1] (13.78,-2.9) -- (16,-2) -- (12.56,-0.2) -- cycle;
\fill[color=qqqqff,fill=qqqqff,fill opacity=0.1] (13.78,-2.9) -- (16,-4) -- (16,-2) -- cycle;
\fill[color=zzttqq,fill=zzttqq,fill opacity=0.15] (26,-6) -- (28,-3) -- (30,-8) -- cycle;
\draw [color=zzttqq] (4,-4)-- (8,-4);
\draw [color=zzttqq] (4,-2)-- (4,-4);
\draw [color=zzttqq] (4,-2)-- (6,1);
\draw [color=zzttqq] (6,1)-- (8,-4);
\draw [dash pattern=on 1pt off 1pt,color=zzttqq] (8,-4)-- (4,-2);
\draw [color=zzttqq] (4,-2)-- (6,1);
\draw [color=zzttqq] (6,1)-- (4,-4);
\draw [color=zzttqq] (4,-4)-- (4,-2);
\draw [color=zzttqq] (6,1)-- (4,-4);
\draw [color=zzttqq] (4,-4)-- (8,-4);
\draw [color=zzttqq] (8,-4)-- (6,1);
\draw [color=zzttqq] (16,-4)-- (20,-4);
\draw [color=zzttqq] (16,-2)-- (16,-4);
\draw [color=zzttqq] (16,-2)-- (18,1);
\draw [color=zzttqq] (18,1)-- (20,-4);
\draw [dash pattern=on 1pt off 1pt,color=zzttqq] (20,-4)-- (16,-2);
\draw [color=zzttqq] (18,1)-- (16,-4);
\draw [color=zzttqq] (16,-4)-- (20,-4);
\draw [color=zzttqq] (20,-4)-- (18,1);
\draw [color=zzttqq] (16,-2)-- (16.67,-2.33);
\draw [shift={(16,-3)},dotted]  plot[domain=1.11:2.46,variable=\t]({1*4.47*cos(\t r)+0*4.47*sin(\t r)},{0*4.47*cos(\t r)+1*4.47*sin(\t r)});
\draw [color=zzttqq] (12.56,-0.2)-- (16,-4);
\draw [color=zzttqq] (16,-4)-- (16,-2);
\draw [shift={(16,-3)},dotted]  plot[domain=0.66:1.98,variable=\t]({1*2.12*cos(\t r)+0*2.12*sin(\t r)},{0*2.12*cos(\t r)+1*2.12*sin(\t r)});
\draw [color=qqqqff] (12.56,-0.2)-- (15.16,-1.08);
\draw [color=qqqqff] (15.16,-1.08)-- (16,-4);
\draw [color=qqqqff] (16,-4)-- (12.56,-0.2);
\draw [color=qqqqff] (15.16,-1.08)-- (16,-2);
\draw [color=qqqqff] (12.56,-0.2)-- (15.16,-1.08);
\draw [color=qqqqff] (15.16,-1.08)-- (16,-4);
\draw [color=qqqqff] (16,-4)-- (16,-2);
\draw [color=qqqqff] (16,-2)-- (15.16,-1.08);
\draw [color=qqqqff] (12.56,-0.2)-- (13.78,-2.9);
\draw [color=qqqqff] (13.78,-2.9)-- (16,-4);
\draw [color=qqqqff] (16,-4)-- (12.56,-0.2);
\draw [dash pattern=on 1pt off 1pt,color=qqqqff] (16,-2)-- (12.56,-0.2);
\draw [color=qqqqff] (12.56,-0.2)-- (13.78,-2.9);
\draw [color=qqqqff] (13.78,-2.9)-- (16,-4);
\draw [color=qqqqff] (16,-4)-- (16,-2);
\draw [dash pattern=on 1pt off 1pt,color=qqqqff] (16,-2)-- (13.78,-2.9);
\draw [color=zzttqq] (26,-8)-- (30,-8);
\draw [color=zzttqq] (26,-6)-- (26,-8);
\draw [color=zzttqq] (26,-6)-- (28,-3);
\draw [color=zzttqq] (28,-3)-- (30,-8);
\draw [color=zzttqq] (30,-8)-- (26,-6);
\draw [color=zzttqq] (26,-6)-- (28,-3);
\draw [color=zzttqq] (28,-3)-- (26,-8);
\draw [color=zzttqq] (26,-8)-- (26,-6);
\draw [color=zzttqq] (28,-3)-- (26,-8);
\draw [color=zzttqq] (26,-8)-- (30,-8);
\draw [color=zzttqq] (30,-8)-- (28,-3);
\draw [dash pattern=on 1pt off 1pt,color=qqqqff] (30,-4)-- (26,-6);
\draw [color=qqqqff] (30,-8)-- (30,-4);
\draw [color=qqqqff] (28,-3)-- (30,-4);
\draw [color=qqqqff] (30,-4)-- (30,-8);
\draw [color=qqqqff] (30,-8)-- (28,-3);
\draw [color=qqqqff] (26,-6)-- (28,-3);
\draw [color=qqqqff] (28,-3)-- (30,-4);
\draw [dash pattern=on 1pt off 1pt,color=ffqqqq] (28.46,-5.11)-- (27.23,-5.8);
\draw [color=qqqqff] (28.67,-4.68)-- (30,-4);
\draw [color=ffqqqq] (27.23,-5.8)-- (27.86,-5.45);
\draw (5.56,-1.25) node[anchor=north west] {$\TT_i$};
\draw (6.85,0.18) node[anchor=north west] {$\Gamma_{j_1}$};
\draw (6.99,-2.49) node[anchor=north west] {$\Gamma_{j_2}$};
\draw (4.02,-0.04) node[anchor=north west] {$\Gamma_{j_3}$};
\draw (4.63,-4.18) node[anchor=north west] {$\Gamma_{j_4}$};
\draw (14.4,0.1) node[anchor=north west] {$\QQ_j$};
\draw (16.02,-0.08) node[anchor=north west] {$i$};
\draw (17.82,-1.74) node[anchor=north west] {$\TT_i$};
\draw (13.21,-3.03) node[anchor=north west] {$\mathbb{\wp}(i,j)$};
\draw [->,dash pattern=on 1pt off 1pt] (28.02,-5.62) -- (28.66,-5.29);
\draw (27.13,-5.78) node[anchor=north west] {$\ell(j)$};
\draw (28.99,-4.46) node[anchor=north west] {$r(j)$};
\draw (28.36,-5.41) node[anchor=north west] {$\vec{n_j}$};
\draw (30.3,-7.62) node[anchor=north west] {$\Gamma_j$};
\begin{scriptsize}
\fill [color=uuuuuu] (4,-4) circle (1.0pt);
\fill [color=uuuuuu] (8,-4) circle (1.0pt);
\fill [color=uuuuuu] (4,-2) circle (1.0pt);
\fill [color=uuuuuu] (6,1) circle (1.0pt);
\fill [color=uuuuuu] (16,-4) circle (1.0pt);
\fill [color=uuuuuu] (20,-4) circle (1.0pt);
\fill [color=uuuuuu] (16,-2) circle (1.0pt);
\fill [color=uuuuuu] (18,1) circle (1.0pt);
\fill [color=uuuuuu] (12.56,-0.2) circle (1.0pt);
\draw [color=ffqqqq] (17.68,-1.7) circle (1.5pt);
\draw [color=ffqqqq] (15.16,-1.08) circle (1.5pt);
\draw [color=ffqqqq] (13.78,-2.9) circle (1.5pt);
\fill [color=uuuuuu] (26,-8) circle (1.0pt);
\fill [color=uuuuuu] (30,-8) circle (1.0pt);
\fill [color=uuuuuu] (26,-6) circle (1.0pt);
\fill [color=uuuuuu] (28,-3) circle (1.0pt);
\fill [color=qqqqff] (30,-4) circle (1.5pt);
\draw[color=qqqqff] (0.19,5.94) node {$L$};
\draw [color=ffqqqq] (28.46,-5.11) circle (1.5pt);
\draw [color=ffqqqq] (27.23,-5.8) circle (1.5pt);
\end{scriptsize}
\end{tikzpicture}
		\begin{tikzpicture}[line cap=round,line join=round,>=triangle 45,x=1.0cm,y=1.0cm]
\clip(25.51,-9.07) rectangle (31.11,-2.57);
\fill[color=zzttqq,fill=zzttqq,fill opacity=0.1] (4,-4) -- (8,-4) -- (4,-2) -- cycle;
\fill[color=zzttqq,fill=zzttqq,fill opacity=0.1] (4,-2) -- (6,1) -- (8,-4) -- cycle;
\fill[color=zzttqq,fill=zzttqq,fill opacity=0.1] (4,-2) -- (6,1) -- (4,-4) -- cycle;
\fill[color=zzttqq,fill=zzttqq,fill opacity=0.1] (6,1) -- (4,-4) -- (8,-4) -- cycle;
\fill[color=zzttqq,fill=zzttqq,fill opacity=0.1] (16,-4) -- (20,-4) -- (16,-2) -- cycle;
\fill[color=zzttqq,fill=zzttqq,fill opacity=0.1] (16,-2) -- (18,1) -- (20,-4) -- cycle;
\fill[color=zzttqq,fill=zzttqq,fill opacity=0.1] (18,1) -- (16,-4) -- (20,-4) -- cycle;
\fill[color=zzttqq,fill=zzttqq,fill opacity=0.1] (16,-2) -- (12.56,-0.2) -- (16,-4) -- cycle;
\fill[color=qqqqff,fill=qqqqff,fill opacity=0.1] (12.56,-0.2) -- (15.16,-1.08) -- (16,-4) -- cycle;
\fill[color=qqqqff,fill=qqqqff,fill opacity=0.1] (15.16,-1.08) -- (16,-2) -- (12.56,-0.2) -- cycle;
\fill[color=qqqqff,fill=qqqqff,fill opacity=0.1] (15.16,-1.08) -- (16,-4) -- (16,-2) -- cycle;
\fill[color=qqqqff,fill=qqqqff,fill opacity=0.1] (12.56,-0.2) -- (13.78,-2.9) -- (16,-4) -- cycle;
\fill[color=qqqqff,fill=qqqqff,fill opacity=0.1] (13.78,-2.9) -- (16,-2) -- (12.56,-0.2) -- cycle;
\fill[color=qqqqff,fill=qqqqff,fill opacity=0.1] (13.78,-2.9) -- (16,-4) -- (16,-2) -- cycle;
\fill[color=zzttqq,fill=zzttqq,fill opacity=0.15] (26,-6) -- (28,-3) -- (30,-8) -- cycle;
\draw [color=zzttqq] (4,-4)-- (8,-4);
\draw [color=zzttqq] (4,-2)-- (4,-4);
\draw [color=zzttqq] (4,-2)-- (6,1);
\draw [color=zzttqq] (6,1)-- (8,-4);
\draw [dash pattern=on 1pt off 1pt,color=zzttqq] (8,-4)-- (4,-2);
\draw [color=zzttqq] (4,-2)-- (6,1);
\draw [color=zzttqq] (6,1)-- (4,-4);
\draw [color=zzttqq] (4,-4)-- (4,-2);
\draw [color=zzttqq] (6,1)-- (4,-4);
\draw [color=zzttqq] (4,-4)-- (8,-4);
\draw [color=zzttqq] (8,-4)-- (6,1);
\draw [color=zzttqq] (16,-4)-- (20,-4);
\draw [color=zzttqq] (16,-2)-- (16,-4);
\draw [color=zzttqq] (16,-2)-- (18,1);
\draw [color=zzttqq] (18,1)-- (20,-4);
\draw [dash pattern=on 1pt off 1pt,color=zzttqq] (20,-4)-- (16,-2);
\draw [color=zzttqq] (18,1)-- (16,-4);
\draw [color=zzttqq] (16,-4)-- (20,-4);
\draw [color=zzttqq] (20,-4)-- (18,1);
\draw [color=zzttqq] (16,-2)-- (16.67,-2.33);
\draw [shift={(16,-3)},dotted]  plot[domain=1.11:2.46,variable=\t]({1*4.47*cos(\t r)+0*4.47*sin(\t r)},{0*4.47*cos(\t r)+1*4.47*sin(\t r)});
\draw [color=zzttqq] (12.56,-0.2)-- (16,-4);
\draw [color=zzttqq] (16,-4)-- (16,-2);
\draw [shift={(16,-3)},dotted]  plot[domain=0.66:1.98,variable=\t]({1*2.12*cos(\t r)+0*2.12*sin(\t r)},{0*2.12*cos(\t r)+1*2.12*sin(\t r)});
\draw [color=qqqqff] (12.56,-0.2)-- (15.16,-1.08);
\draw [color=qqqqff] (15.16,-1.08)-- (16,-4);
\draw [color=qqqqff] (16,-4)-- (12.56,-0.2);
\draw [color=qqqqff] (15.16,-1.08)-- (16,-2);
\draw [color=qqqqff] (12.56,-0.2)-- (15.16,-1.08);
\draw [color=qqqqff] (15.16,-1.08)-- (16,-4);
\draw [color=qqqqff] (16,-4)-- (16,-2);
\draw [color=qqqqff] (16,-2)-- (15.16,-1.08);
\draw [color=qqqqff] (12.56,-0.2)-- (13.78,-2.9);
\draw [color=qqqqff] (13.78,-2.9)-- (16,-4);
\draw [color=qqqqff] (16,-4)-- (12.56,-0.2);
\draw [dash pattern=on 1pt off 1pt,color=qqqqff] (16,-2)-- (12.56,-0.2);
\draw [color=qqqqff] (12.56,-0.2)-- (13.78,-2.9);
\draw [color=qqqqff] (13.78,-2.9)-- (16,-4);
\draw [color=qqqqff] (16,-4)-- (16,-2);
\draw [dash pattern=on 1pt off 1pt,color=qqqqff] (16,-2)-- (13.78,-2.9);
\draw [color=zzttqq] (26,-8)-- (30,-8);
\draw [color=zzttqq] (26,-6)-- (26,-8);
\draw [color=zzttqq] (26,-6)-- (28,-3);
\draw [color=zzttqq] (28,-3)-- (30,-8);
\draw [color=zzttqq] (30,-8)-- (26,-6);
\draw [color=zzttqq] (26,-6)-- (28,-3);
\draw [color=zzttqq] (28,-3)-- (26,-8);
\draw [color=zzttqq] (26,-8)-- (26,-6);
\draw [color=zzttqq] (28,-3)-- (26,-8);
\draw [color=zzttqq] (26,-8)-- (30,-8);
\draw [color=zzttqq] (30,-8)-- (28,-3);
\draw [dash pattern=on 1pt off 1pt,color=qqqqff] (30,-4)-- (26,-6);
\draw [color=qqqqff] (30,-8)-- (30,-4);
\draw [color=qqqqff] (28,-3)-- (30,-4);
\draw [color=qqqqff] (30,-4)-- (30,-8);
\draw [color=qqqqff] (30,-8)-- (28,-3);
\draw [color=qqqqff] (26,-6)-- (28,-3);
\draw [color=qqqqff] (28,-3)-- (30,-4);
\draw [dash pattern=on 1pt off 1pt,color=ffqqqq] (28.46,-5.11)-- (27.23,-5.8);
\draw [color=qqqqff] (28.67,-4.68)-- (30,-4);
\draw [color=ffqqqq] (27.23,-5.8)-- (27.86,-5.45);
\draw (5.56,-1.25) node[anchor=north west] {$\TT_i$};
\draw (6.85,0.18) node[anchor=north west] {$\Gamma_{j_1}$};
\draw (6.99,-2.49) node[anchor=north west] {$\Gamma_{j_2}$};
\draw (4.02,-0.04) node[anchor=north west] {$\Gamma_{j_3}$};
\draw (4.63,-4.18) node[anchor=north west] {$\Gamma_{j_4}$};
\draw (14.4,0.1) node[anchor=north west] {$\QQ_j$};
\draw (16.02,-0.08) node[anchor=north west] {$i$};
\draw (17.82,-1.74) node[anchor=north west] {$\TT_i$};
\draw (13.21,-3.03) node[anchor=north west] {$\mathbb{\wp}(i,j)$};
\draw [->,dash pattern=on 1pt off 1pt] (28.02,-5.62) -- (28.66,-5.29);
\draw (27.13,-5.78) node[anchor=north west] {$\ell(j)$};
\draw (28.99,-4.46) node[anchor=north west] {$r(j)$};
\draw (28.36,-5.41) node[anchor=north west] {$\nv_{j}$};
\draw (30.3,-7.62) node[anchor=north west] {$\Gamma_j$};
\begin{scriptsize}
\fill [color=uuuuuu] (4,-4) circle (1.0pt);
\fill [color=uuuuuu] (8,-4) circle (1.0pt);
\fill [color=uuuuuu] (4,-2) circle (1.0pt);
\fill [color=uuuuuu] (6,1) circle (1.0pt);
\fill [color=uuuuuu] (16,-4) circle (1.0pt);
\fill [color=uuuuuu] (20,-4) circle (1.0pt);
\fill [color=uuuuuu] (16,-2) circle (1.0pt);
\fill [color=uuuuuu] (18,1) circle (1.0pt);
\fill [color=uuuuuu] (12.56,-0.2) circle (1.0pt);
\draw [color=ffqqqq] (17.68,-1.7) circle (1.5pt);
\draw [color=ffqqqq] (15.16,-1.08) circle (1.5pt);
\draw [color=ffqqqq] (13.78,-2.9) circle (1.5pt);
\fill [color=uuuuuu] (26,-8) circle (1.0pt);
\fill [color=uuuuuu] (30,-8) circle (1.0pt);
\fill [color=uuuuuu] (26,-6) circle (1.0pt);
\fill [color=uuuuuu] (28,-3) circle (1.0pt);
\fill [color=uuuuuu] (30,-4) circle (0.5pt);
\draw [color=ffqqqq] (28.46,-5.11) circle (1.5pt);
\draw [color=ffqqqq] (27.23,-5.8) circle (1.5pt);
\end{scriptsize}
\end{tikzpicture}
    \caption{A tetrahedral element of the primary mesh with $S_i=\{j_1, j_2, j_3, j_4\}$ (left) and the standard orientation used throughout this paper (right).}
    \label{fig.1}
		\end{center}
\end{figure}
\begin{figure}[ht]
    \begin{center}
		\begin{tabular}{cc} 
		\input{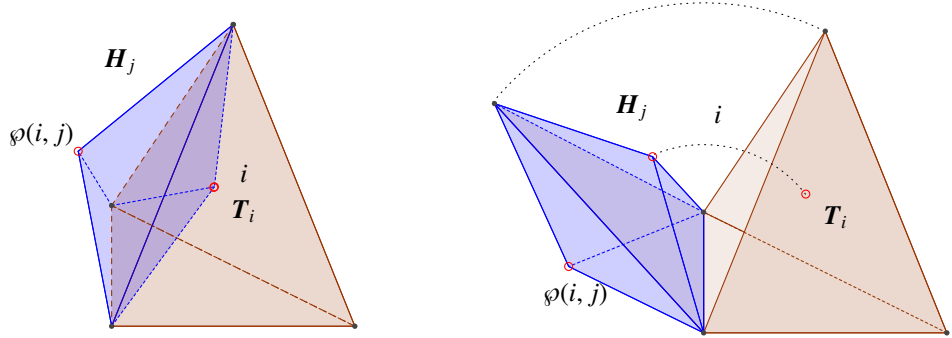} & 
		\begin{tikzpicture}[line cap=round,line join=round,>=triangle 45,x=0.8cm,y=0.8cm]
\clip(11.74,-4.85) rectangle (20.91,2.12);
\fill[color=zzttqq,fill=zzttqq,fill opacity=0.1] (4,-4) -- (8,-4) -- (4,-2) -- cycle;
\fill[color=zzttqq,fill=zzttqq,fill opacity=0.1] (4,-2) -- (6,1) -- (8,-4) -- cycle;
\fill[color=zzttqq,fill=zzttqq,fill opacity=0.1] (4,-2) -- (6,1) -- (4,-4) -- cycle;
\fill[color=zzttqq,fill=zzttqq,fill opacity=0.1] (6,1) -- (4,-4) -- (8,-4) -- cycle;
\fill[color=zzttqq,fill=zzttqq,fill opacity=0.1] (16,-4) -- (20,-4) -- (16,-2) -- cycle;
\fill[color=zzttqq,fill=zzttqq,fill opacity=0.1] (16,-2) -- (18,1) -- (20,-4) -- cycle;
\fill[color=zzttqq,fill=zzttqq,fill opacity=0.1] (18,1) -- (16,-4) -- (20,-4) -- cycle;
\fill[color=zzttqq,fill=zzttqq,fill opacity=0.1] (16,-2) -- (12.56,-0.2) -- (16,-4) -- cycle;
\fill[color=qqqqff,fill=qqqqff,fill opacity=0.1] (12.56,-0.2) -- (15.16,-1.08) -- (16,-4) -- cycle;
\fill[color=qqqqff,fill=qqqqff,fill opacity=0.1] (15.16,-1.08) -- (16,-2) -- (12.56,-0.2) -- cycle;
\fill[color=qqqqff,fill=qqqqff,fill opacity=0.1] (15.16,-1.08) -- (16,-4) -- (16,-2) -- cycle;
\fill[color=qqqqff,fill=qqqqff,fill opacity=0.1] (12.56,-0.2) -- (13.78,-2.9) -- (16,-4) -- cycle;
\fill[color=qqqqff,fill=qqqqff,fill opacity=0.1] (13.78,-2.9) -- (16,-2) -- (12.56,-0.2) -- cycle;
\fill[color=qqqqff,fill=qqqqff,fill opacity=0.1] (13.78,-2.9) -- (16,-4) -- (16,-2) -- cycle;
\fill[color=zzttqq,fill=zzttqq,fill opacity=0.15] (26,-6) -- (28,-3) -- (30,-8) -- cycle;
\draw [color=zzttqq] (4,-4)-- (8,-4);
\draw [color=zzttqq] (4,-2)-- (4,-4);
\draw [color=zzttqq] (4,-2)-- (6,1);
\draw [color=zzttqq] (6,1)-- (8,-4);
\draw [dash pattern=on 1pt off 1pt,color=zzttqq] (8,-4)-- (4,-2);
\draw [color=zzttqq] (4,-2)-- (6,1);
\draw [color=zzttqq] (6,1)-- (4,-4);
\draw [color=zzttqq] (4,-4)-- (4,-2);
\draw [color=zzttqq] (6,1)-- (4,-4);
\draw [color=zzttqq] (4,-4)-- (8,-4);
\draw [color=zzttqq] (8,-4)-- (6,1);
\draw [color=zzttqq] (16,-4)-- (20,-4);
\draw [color=zzttqq] (16,-2)-- (16,-4);
\draw [color=zzttqq] (16,-2)-- (18,1);
\draw [color=zzttqq] (18,1)-- (20,-4);
\draw [dash pattern=on 1pt off 1pt,color=zzttqq] (20,-4)-- (16,-2);
\draw [color=zzttqq] (18,1)-- (16,-4);
\draw [color=zzttqq] (16,-4)-- (20,-4);
\draw [color=zzttqq] (20,-4)-- (18,1);
\draw [color=zzttqq] (16,-2)-- (16.67,-2.33);
\draw [shift={(16,-3)},dotted]  plot[domain=1.11:2.46,variable=\t]({1*4.47*cos(\t r)+0*4.47*sin(\t r)},{0*4.47*cos(\t r)+1*4.47*sin(\t r)});
\draw [color=zzttqq] (12.56,-0.2)-- (16,-4);
\draw [color=zzttqq] (16,-4)-- (16,-2);
\draw [shift={(16,-3)},dotted]  plot[domain=0.66:1.98,variable=\t]({1*2.12*cos(\t r)+0*2.12*sin(\t r)},{0*2.12*cos(\t r)+1*2.12*sin(\t r)});
\draw [color=qqqqff] (12.56,-0.2)-- (15.16,-1.08);
\draw [color=qqqqff] (15.16,-1.08)-- (16,-4);
\draw [color=qqqqff] (16,-4)-- (12.56,-0.2);
\draw [color=qqqqff] (15.16,-1.08)-- (16,-2);
\draw [color=qqqqff] (12.56,-0.2)-- (15.16,-1.08);
\draw [color=qqqqff] (15.16,-1.08)-- (16,-4);
\draw [color=qqqqff] (16,-4)-- (16,-2);
\draw [color=qqqqff] (16,-2)-- (15.16,-1.08);
\draw [color=qqqqff] (12.56,-0.2)-- (13.78,-2.9);
\draw [color=qqqqff] (13.78,-2.9)-- (16,-4);
\draw [color=qqqqff] (16,-4)-- (12.56,-0.2);
\draw [dash pattern=on 1pt off 1pt,color=qqqqff] (16,-2)-- (12.56,-0.2);
\draw [color=qqqqff] (12.56,-0.2)-- (13.78,-2.9);
\draw [color=qqqqff] (13.78,-2.9)-- (16,-4);
\draw [color=qqqqff] (16,-4)-- (16,-2);
\draw [dash pattern=on 1pt off 1pt,color=qqqqff] (16,-2)-- (13.78,-2.9);
\draw [color=zzttqq] (26,-8)-- (30,-8);
\draw [color=zzttqq] (26,-6)-- (26,-8);
\draw [color=zzttqq] (26,-6)-- (28,-3);
\draw [color=zzttqq] (28,-3)-- (30,-8);
\draw [color=zzttqq] (30,-8)-- (26,-6);
\draw [color=zzttqq] (26,-6)-- (28,-3);
\draw [color=zzttqq] (28,-3)-- (26,-8);
\draw [color=zzttqq] (26,-8)-- (26,-6);
\draw [color=zzttqq] (28,-3)-- (26,-8);
\draw [color=zzttqq] (26,-8)-- (30,-8);
\draw [color=zzttqq] (30,-8)-- (28,-3);
\draw [dash pattern=on 1pt off 1pt,color=qqqqff] (30,-4)-- (26,-6);
\draw [color=qqqqff] (30,-8)-- (30,-4);
\draw [color=qqqqff] (28,-3)-- (30,-4);
\draw [color=qqqqff] (30,-4)-- (30,-8);
\draw [color=qqqqff] (30,-8)-- (28,-3);
\draw [color=qqqqff] (26,-6)-- (28,-3);
\draw [color=qqqqff] (28,-3)-- (30,-4);
\draw [dash pattern=on 1pt off 1pt,color=ffqqqq] (28.46,-5.11)-- (27.23,-5.8);
\draw [color=qqqqff] (28.67,-4.68)-- (30,-4);
\draw [color=ffqqqq] (27.23,-5.8)-- (27.86,-5.45);
\draw (5.56,-1.25) node[anchor=north west] {$\TT_i$};
\draw (6.85,0.18) node[anchor=north west] {$\Gamma_{j_1}$};
\draw (6.99,-2.49) node[anchor=north west] {$\Gamma_{j_2}$};
\draw (4.02,-0.04) node[anchor=north west] {$\Gamma_{j_3}$};
\draw (4.63,-4.18) node[anchor=north west] {$\Gamma_{j_4}$};
\draw (14.4,0.1) node[anchor=north west] {$\QQ_j$};
\draw (16.02,-0.08) node[anchor=north west] {$i$};
\draw (17.82,-1.74) node[anchor=north west] {$\TT_i$};
\draw (13.21,-3.03) node[anchor=north west] {$\mathbb{\wp}(i,j)$};
\draw [->,dash pattern=on 1pt off 1pt] (28.02,-5.62) -- (28.66,-5.29);
\draw (27.13,-5.78) node[anchor=north west] {$\ell(j)$};
\draw (28.99,-4.46) node[anchor=north west] {$r(j)$};
\draw (28.36,-5.41) node[anchor=north west] {$\vec{n_j}$};
\draw (30.3,-7.62) node[anchor=north west] {$\Gamma_j$};
\begin{scriptsize}
\fill [color=uuuuuu] (4,-4) circle (1.0pt);
\fill [color=uuuuuu] (8,-4) circle (1.0pt);
\fill [color=uuuuuu] (4,-2) circle (1.0pt);
\fill [color=uuuuuu] (6,1) circle (1.0pt);
\fill [color=uuuuuu] (16,-4) circle (1.0pt);
\fill [color=uuuuuu] (20,-4) circle (1.0pt);
\fill [color=uuuuuu] (16,-2) circle (1.0pt);
\fill [color=uuuuuu] (18,1) circle (1.0pt);
\fill [color=uuuuuu] (12.56,-0.2) circle (1.0pt);
\draw [color=ffqqqq] (17.68,-1.7) circle (1.5pt);
\draw [color=ffqqqq] (15.16,-1.08) circle (1.5pt);
\draw [color=ffqqqq] (13.78,-2.9) circle (1.5pt);
\fill [color=uuuuuu] (26,-8) circle (1.0pt);
\fill [color=uuuuuu] (30,-8) circle (1.0pt);
\fill [color=uuuuuu] (26,-6) circle (1.0pt);
\fill [color=uuuuuu] (28,-3) circle (1.0pt);
\fill [color=qqqqff] (30,-4) circle (1.5pt);
\draw[color=qqqqff] (0.19,5.94) node {$L$};
\draw [color=ffqqqq] (28.46,-5.11) circle (1.5pt);
\draw [color=ffqqqq] (27.23,-5.8) circle (1.5pt);
\end{scriptsize}
\end{tikzpicture}
		\end{tabular} 
    \caption{An example of a dual element (a non-standard 5-point hexahedron, highlighted in blue) associated with the face $\Gamma_j$.}
    \label{fig.1.1}
		\end{center}
\end{figure}
We exdend our definitions on the main grid to the dual one, namely: $N_l$ is the total amount of sides of $\QQ_j$; $\Gamma_l$ indicates the $l$-th side; $\forall j$, the set of sides $l$ of $j$ is indicated with $S_j$; $\forall l$, $\ell_{jl}(l)$ and $r_{jl}(l)$ are the left and the right hexahedral element, respectively; $\nv_{l}$ is the standard normal vector defined on $l$ and assumed positive with respect to the standard orientation on $l$ (defined, as for the main grid, from the left to the right).

In the time direction we cover the time interval $[0,T]$ with a sequence of times $0=t^0<t^1<t^2 \ldots <t^N<t^{N+1}=T$. We denote the time step between $t^n$ and $t^{n+1}$ by 
$\Delta t^{n+1} = t^{n+1}-t^{n} $ and the associated time interval by $T^{n+1}=[t^{n}, t^{n+1}]$, for $n=0 \ldots N$. In order to ease the notation, sometimes we will simply
write $\Delta t= \Delta t^{n+1}$. In this way the generic space-time element defined in the time interval $[t^n, t^{n+1}]$ is given by $\TT_i^\st=\TT_i \times T^{n+1}$ 
for the main grid and $\QQ_j^\st=\QQ_j \times T^{n+1}$ for the dual grid. 

\subsection{Space-time basis functions}
\label{sec222}
We first construct the spatial basis functions and then we extend them to the time direction using a simple tensor product. For tetrahedral elements, the basis functions are generated on a
standard reference tetrahedron, defined by $T_{ref}=\{(\xi,\eta,\zeta) \in \R^{3} \,\, | \,\, 0 \leq \xi+\eta+\zeta \leq 1 \}$. 
We write the basis function on the reference element as 
\begin{equation}
	\phi_k(\xi,\eta,\zeta)= \sum_{r_1=0}^p  \sum_{r_2=0}^{p-r_1} \sum_{r_3=0}^{p-r_2-r_1}   \alpha_{kr} \, \xi^{r_1}\eta^{r_2}\zeta^{r_3} := \alpha_{kr} \boldsymbol{\xi}^r, 
\label{eq:bf_1}
\end{equation}
for some coefficients $\alpha_{kr}$ and the multi-index $r=(r_1,r_2,r_3)$. We then define $N_\phi=\frac{(p+1)(p+2)(p+3)}{6}$ nodal points $\boldsymbol{\xi}_j=(\xi_{j_1},\eta_{j_2},\zeta_{j_3})=(j_1/p,j_2/p,j_3/p)$, with the multi-index $j=(j_1,j_2,j_3)$ and $0 \leq j_1+j_2+j_3 \leq p$, as in standard conforming finite elements. We then impose the classical interpolation condition for nodal finite elements 
$\phi_k(\boldsymbol{\xi}_j) = \delta_{kj}$, with the usual Kronecker symbol $\delta_{kj}$. This means that we have chosen a nodal basis which is defined by the Lagrange interpolation polynomials that pass through the nodes given by the  standard nodes of conforming finite elements. This leads to the linear system $\alpha_{kr} \boldsymbol{\xi}_{\!j}^r = \delta_{kj}$ for the coefficients $\alpha_{kr}$ that can be solved analytically for every polynomial degree $p$ on the reference 
tetrahedron. 
In this way we obtain $N_\phi$ basis functions on $T_{ref}$, $\{\phi_k\}_{k \in [1, N_\phi]}$. The connection between the reference coordinates $\boldsymbol{\xi}$ and the physical coordinates 
$\mathbf{x}$ is performed by the map $T(\cdot,\TT_i)=T_i:\TT_i \longrightarrow T_{ref}$ for every $i =1 \ldots \Ni$ and its inverse, called $T^{-1}(\cdot,\TT_i)=T_i^{-1}:\TT_i \longleftarrow T_{ref}$. The maps from the physical to the reference coordinates can be constructed following a classical sub-parametric or a complete iso-parametric approach and in general we will write, for all $i=1\ldots \Ni$, $\phi^{(i)}_k(x,y,z)=\phi_k(T_i(x,y,z))$.

\par Unfortunately, it is not so easy to construct a similar nodal basis on the dual mesh, due to the use of non-standard 5-point hexahedral elements. 
As discussed in \cite{Chatzi2000}, the definition of basis functions based on Lagrange interpolation polynomials on this kind of element is problematic, 
since for special configurations of the vertex coordinates of the dual elements, the linear system to be solved for the classical interpolation condition 
of a nodal basis can become singular. This does not allow the construction of a nodal polynomial basis for a generic element $\QQ_j$ and therefore one 
has to pass to rational functions of polynomials instead of using simple polynomial functions in that case.

Therefore, for the basis functions on the dual grid directly we choose a simple Taylor-type modal basis \cite{LuoTaylor} directly in the physical space, hence the basis functions will 
consequently depend on the element $j \in [1, \Nj]$. The basis functions read  
\begin{equation}
	\psi_k^{(j)}(\xx)= \frac{\left(x-x_0^{(j)}\right)^{k_1}\left(y-y_0^{(j)}\right)^{k_2}\left(z-z_0^{(j)}\right)^{k_3}}{h_j^{k_1+k_2+k_3}},
	\label{eq:bf_2}
\end{equation} 
where $\xx_0^j=(x,y,z)_0^{(j)}$ is the center of the dual element and $h_j$ is a characteristic length of $\QQ_j$ used for scaling the basis. 
Here, $0 \leq k_1 + k_2 + k_3 \leq p$, i.e. we use the optimal number of polynomials of degree $p$ in three space dimensions, namely $N_\psi = N_\phi$. 
With this choice we get only a modal basis for the dual hexaxedral elements, i.e. if the convective term is directly computed on the dual mesh according to 
the natural extension of the method proposed in \cite{2STINS}, then it has to be computed according to a modal approach, which is more expensive than 
a nodal one. 

Finally, the time basis functions  are constructed on a reference interval $I=[0,1]$ for polynomials of degree $p_\gamma$. 
In this case the resulting $N_\gamma=p_\gamma+1$ basis functions $\{\gamma_k\}_{k \in [1, N_\gamma]}$ are defined as the Lagrange interpolation polynomials passing through the Gauss-Legendre quadrature points for the unit interval. For every time interval $[t^n, t^{n+1}]$, the map between the reference interval and the physical one is simply given by $t=t^n+\tau \Delta t^{n+1}$ for every $\tau \in [0,1]$.
Using the tensor product we can finally construct the basis functions on the space-time elements $\TT_i^\st$ and $\QQ_j^\st$ such as $\tilde{\phi}(\xi,\eta,\zeta,\tau)=\phi(\xi, \eta, \zeta) \cdot \gamma(\tau)$ and $\tilde{\psi}^{(j)}(x, y,z,t)=\psi^{(j)}(x, y, z) \cdot \gamma(\tau(t))$. The total number of basis functions becomes $N_\phi^\st=N_\phi \cdot N_\gamma$ and $N_\psi^\st=N_\psi \cdot N_\gamma$. 



\subsection{Staggered semi-implicit space-time DG scheme}
\label{sec_semi_imp_dg}
The discrete pressure $p_h$ is defined on the main grid, namely $p_h(\xx,t)|_{\TT_i^\st}=p_i(\xx,t)=p_i$, while the discrete
velocity vector field $\mathbf{v}_h$ is defined on the dual grid, namely  $\mathbf{v}_h(\xx,t)|_{\QQ_j^\st}=\mathbf{v}_j(\xx,t)=\mathbf{v}_j$ . \par

The numerical solution of \eref{eq:CS_2}-\eref{eq:CS_2_2} is represented in each space-time element $\TT_i^\st$ and $\QQ_j^\st$ between times $t^n$ and $t^{n+1}$ by piecewise 
polynomials as  
\begin{equation}
	p_i(\xx,t)=\sum\limits_{l=1}^{N_\phi^\st} \tilde{\phi}_l^{(i)}(\xx,t)\hat{p}_{l,i}^{n+1}=:\tilde{\bphi}^{(i)}(\xx,t)\etah_i^{n+1},
\label{eq:D_1}
\end{equation}
\begin{equation}
	\mathbf{v}_j(\xx,t)=\sum\limits_{l=1}^{N_\psi^\st} \tilde{\psi}_l^{(j)}(\xx,t) \hat{\mathbf{v}}_{l,j}^{n+1}=:\tilde{\bpsi}^{(j)}(\xx,t)\vvh_j^{n+1},
\label{eq:D_3}
\end{equation}
where the vector of basis functions $\tilde{\bphi}(\xx,t)$ is generated from $\tilde{\bphi}(\xi,\eta,\zeta,\tau)$ on $T_{std}\times [0,1]$ while $\tilde{\bpsi}^{(j)}(\xx,t)$ is 
defined for every $j\in [1 \ldots \Nj]-\B(\Omega)$ directly in the physical space.

A weak formulation of the continuity equation \eref{eq:CS_2} is obtained by multiplying it with a test function $\tilde{\phi}_k^{(i)}$ and integrating over the space-time 
control volume $\TT_i^\st$, for every $k=1\ldots N_\phi^\st$. The resulting weak formulation reads
\begin{equation}
\int\limits_{\TT_i^\st}{\tilde{\phi}_k^{(i)} \nabla \cdot \mathbf{v} \, d\xx  dt}=0,
\label{eq:CS_4}
\end{equation}
with $d\xx = dx dy dz$. 
Similarly, multiplication of the momentum equation \eref{eq:CS_2_2} by the test function $\tilde{\psi}_k^{(j)}$ and integrating over a control volume $\QQ_j^\st$ yields 
\begin{equation}
\int\limits_{\QQ_j^\st}{ \tilde{\psi}_k^{(j)}\left( \diff{\mathbf{v}}{t}+\nabla \cdot \mathbf{\TF} \right)  \, d\xx  dt}+\int\limits_{\QQ_j^\st}{\tilde{\psi}_k^{(j)} \nabla \np \, \, d\xx  dt}=\int\limits_{\QQ_j^\st}{\tilde{\psi}_k^{(j)} \mathbf{S} \, d\xx  dt},
\label{eq:CS_5}
\end{equation}
for every $j=1 \ldots \Nj$ and $k=1 \ldots N_\psi^\st$. Using integration by parts Eq. \eref{eq:CS_4} reads 
\begin{equation}
\oint\limits_{\partial \TT_i^\st}{\tilde{\phi}_k^{(i)} \mathbf{v} \cdot \nv_{i} \, dS dt }-\int\limits_{\TT_i^\st}{\nabla \tilde{\phi}_k^{(i)} \cdot \mathbf{v} \, d\xx dt }  =0,
\label{eq:CS_6}
\end{equation}
where $\nv_{i}$ indicates the outward pointing unit normal vector.
Due to the discontinuity of $p_h$ and $\mathbf{v}_h$, equations \eref{eq:CS_5} and \eref{eq:CS_6} have to be split as follows:

\begin{equation}
\sum\limits_{j \in S_i}\left( \int\limits_{\Gamma_j^\st}{\tilde{\phi}_k^{(i)} \mathbf{v}_j \cdot \nextud \, dS dt}-\int\limits_{\TT_{i,j}^\st}{\nabla \tilde{\phi}_k^{(i)} \cdot \mathbf{v}_j \, d\xx  dt}  \right)=0,
\label{eq:CS_8}
\end{equation}
and
\begin{equation}
\int\limits_{\QQ_j^\st}{\tilde{\psi}_k^{(j)}\left( \diff{\mathbf{v}_j}{t}+\nabla \cdot \mathbf{\TF} \right)  \, d\xx  dt}
+\hspace{-3mm} \int\limits_{\TT_{\ell(j),j}^\st}{\tilde{\psi}_k^{(j)} \nabla \np_{\ell(j)} \, d\xx  dt} 
+\hspace{-3mm} \int\limits_{\TT_{r(j),j}^\st}{\tilde{\psi}_k^{(j)} \nabla \np_{r(j)} \, d\xx  dt} + 
\int\limits_{\Gamma_j^\st}{\tilde{\psi}_k^{(j)} \left(\np_{r(j)}-\np_{\ell(j)}\right) \nstd \, dS dt}=\int\limits_{\QQ_j^\st}{\tilde{\psi}_k^{(j)} \mathbf{S} \, d\xx  dt},
\label{eq:CS_9}
\end{equation}
where $\nextud=\nv_{i}|_{\Gamma_j^\st}$; $\TT_{i,j}^\st=\TT_{i,j} \times T^{n+1}$; and $\Gamma_j^\st=\Gamma_j \times T^{n+1}$. Note that the pressure has a discontinuity along $\Gamma_j^{st}$ 
inside the hexahedral element $\QQ_j^\st$ and hence the pressure gradient in \eref{eq:CS_5} needs to be interpreted in the sense of distributions, as in path-conservative finite volume
schemes \cite{Castro2006,Pares2006}. This leads to the jump terms present in \eref{eq:CS_9}, see \cite{2STINS}. Alternatively, the same jump term can be produced also via forward and backward 
integration by parts, see e.g. the well-known work of Bassi and Rebay \cite{BassiRebay}. 
Using definitions \eref{eq:D_1} and \eref{eq:D_3}, we rewrite the above equations as
\begin{eqnarray}
\sum\limits_{j \in S_i}\left(\int\limits_{\Gamma_j^\st}{\tilde{\phi}_k^{(i)}\tilde{\psi}_l^{(j)} \nextud dS dt}\, \cdot \hat{\mathbf{v}}_{l,j}^{n+1} -\int\limits_{\TT_{i,j}^\st}{\nabla \tilde{\phi}_k^{(i)}\tilde{\psi}_l^{(j)}\, d\xx  dt} \, \cdot \hat{\mathbf{v}}_{l,j}^{n+1} \right)=0,
\label{eq:CS_10}
\end{eqnarray}
and
\begin{eqnarray}
\int\limits_{\QQ_j^\st}{\tilde{\psi}_k^{(j)}  \diff{\mathbf{v}_j}{t}  \, d\xx  dt} + 
\int\limits_{\QQ_j^\st}{\tilde{\psi}_k^{(j)} \nabla \cdot \mathbf{\TF}   \, d\xx  dt} 
+\int\limits_{\TT_{\ell(j),j}^\st}{\tilde{\psi}_k^{(j)} \nabla \tilde{\phi}_{l}^{(\ell(j))}  \, d\xx  dt} \,  \, \hat \np_{l,\ell(j)}^{n+1}
+\int\limits_{\TT_{r(j),j}^\st}{   \tilde{\psi}_k^{(j)} \nabla \tilde{\phi}_{l}^{(r(j))}     \, d\xx  dt} \,  \, \hat \np_{l,r(j)}^{n+1}    \nonumber \\
 +\int\limits_{\Gamma_j^\st}{\tilde{\psi}_k^{(j)} \tilde{\phi}_{l}^{(r(j))}    \nstd dS dt} \,  \hat \np_{l,r(j)}^{n+1}
 -\int\limits_{\Gamma_j^\st}{\tilde{\psi}_k^{(j)} \tilde{\phi}_{l}^{(\ell(j))} \nstd dS dt} \,  \hat \np_{l,\ell(j)}^{n+1}=\int\limits_{\QQ_j^\st}{\tilde{\psi}_k^{(j)} \mathbf{S} \, d\xx  dt}, \nonumber \\
\label{eq:CS_11_1}
\end{eqnarray}
where we have used the standard summation convention for the repeated index $l$.
Integrating the first integral in \eref{eq:CS_11_1} by parts in time we obtain 
\begin{equation}
\int\limits_{\QQ_j^\st}{\tilde{\psi}_k^{(j)} \diff{\mathbf{v}_j}{t} \, d\xx  dt} = 
  \int\limits_{\QQ_j}{\tilde{\psi}_k^{(j)}(\mathbf{x},t^{n+1}) \mathbf{v}_j(\mathbf{x},t^{n+1}) \, d\xx  } 
- \int\limits_{\QQ_j}{\tilde{\psi}_k^{(j)}(\mathbf{x},t^{n})   \mathbf{v}_j(\mathbf{x},t^{n }) \, d\xx   }  
- \int\limits_{\QQ_j^\st}{ \diff{\tilde{\psi}_k^{(j)}}{t} \mathbf{v}_j}(\xx,t) \, d\xx  dt.  
\label{eq:CS_11_2}
\end{equation}
In Eq. \eref{eq:CS_11_2} we can recognize the fluxes between the current space-time element $\QQ_j \times T^{n+1}$, the future and the past space-time elements, as well as an internal contribution 
that connects in an asymmetric way the degrees of freedom inside the element $\QQ_j^\st$. Note that the asymmetry appears only in the volume contribution in \eref{eq:CS_11_2}. For the spatial
integral at time $t^n$ we will insert the boundary-extrapolated numerical solution from the previous time step, which corresponds to upwinding in time direction due to the causality principle.   
By substituting Eq. \eref{eq:CS_11_2} into \eref{eq:CS_11_1} and using the causality principle, we obtain the following weak formulation of the momentum equation: 
\begin{eqnarray}
\left( \int\limits_{\QQ_j}{\tilde{\psi}_k^{(j)}(\xx,t^{n+1}) \tilde{\psi}_l^{(j)}(\xx,t^{n+1}) \, d\xx} - 
       \int\limits_{\QQ_j^\st}{\diff{\tilde{\psi}_k^{(j)}}{t} \tilde{\psi}_l^{(j)} \, d\xx  dt} \right)  \hat{\mathbf{v}}_{l,j}^{n+1} 
 -     \int\limits_{\QQ_j}{\tilde{\psi}_k^{(j)}(\xx,t^{n}) \tilde{\psi}_l^{(j)}(\xx,t^{n}) \, d\xx  } \, \hat{\mathbf{v}}_{l,j}^{n} +
\int\limits_{\QQ_j^\st}{\tilde{\psi}_k^{(j)} \nabla \cdot \mathbf{\TF} \, d\xx } \nonumber \\
+\int\limits_{\TT_{\ell(j),j}^\st}{\tilde{\psi}_k^{(j)} \nabla \tilde{\phi}_{l}^{(\ell(j))}  \, d\xx } \,  \, \hat \np_{l,\ell(j)}^{n+1}
+\int\limits_{\TT_{r(j),j}^\st}{   \tilde{\psi}_k^{(j)} \nabla \tilde{\phi}_{l}^{(r(j))}     \, d\xx } \,  \, \hat \np_{l,r(j)}^{n+1}   
 +\int\limits_{\Gamma_j^\st}{\tilde{\psi}_k^{(j)} \tilde{\phi}_{l}^{(r(j))}    \nstd dS} \,  \hat \np_{l,r(j)}^{n+1}
 -\int\limits_{\Gamma_j^\st}{\tilde{\psi}_k^{(j)} \tilde{\phi}_{l}^{(\ell(j))} \nstd dS} \,  \hat \np_{l,\ell(j)}^{n+1} \nonumber \\ 
=\int\limits_{\QQ_j^\st}{\tilde{\psi}_k^{(j)} \mathbf{S} \, d\xx  dt},  
\label{eq:CS_11}
\end{eqnarray}
For every $i$ and $j$, Eqs. \eref{eq:CS_10} and \eref{eq:CS_11} can be written in a compact matrix form as
\begin{eqnarray}
    \sum\limits_{j \in S_i}\D_{i,j}\vvh_j^{n+1}=0 \label{eq:CS_12},
\end{eqnarray}
and
\begin{eqnarray}
    \left(\Mpsi_j^+ - \Mpsi_j^\circ \right) \vvh_j^{n+1} - \Mpsi_j^-\vvh_j^{n} + \Upsilon_j(\mathbf{v}, \nabla \mathbf{v}) + \RM_j \etah_{r(j)} ^{n+1} - \LM_j \etah_{\ell(j)}^{n+1} =\Ss_j, \label{eq:CS_12_1}
\end{eqnarray}
respectively, where:
\begin{eqnarray}
	\Mpsi_j^+ &=& \int\limits_{\QQ_j}{\tilde{\psi}_k^{(j)}(\xx,t(1))\tilde{\psi}_l^{(j)}(\xx,t(1))  \, d\xx }, \label{eq:MD_2} \\
    \Mpsi_j^- &=& \int\limits_{\QQ_j}{\tilde{\psi}_k^{(j)}(\xx,t(0))\tilde{\psi}_l^{(j)}(\xx,t(1))  \, d\xx }, \label{eq:MD_2_1} \\
    \Mpsi_j^\circ &=& \int\limits_{\QQ_j^\st}{\diff{\tilde{\psi}_k^{(j)}}{t} \tilde{\psi}_l^{(j)} \, d\xx  dt}, \label{eq:MD_2_2} \\
    \Upsilon_j &=& \int\limits_{\QQ_j^\st}{\tilde{\psi}_k^{(j)} \nabla \cdot \mathbf{\TF} \, d\xx  dt}
\end{eqnarray}

\begin{equation}
	\D_{i,j}=\int\limits_{\Gamma_j^\st}{\tilde{\phi}_k^{(i)}\tilde{\psi}_l^{(j)}\nextud dS dt}-\int\limits_{\TT_{i,j}^\st}{\nabla \tilde{\phi}_k^{(i)}\tilde{\psi}_l^{(j)}\, d\xx  dt},
\label{eq:MD_3}
\end{equation}

\begin{equation}
	\RM_{j}=\int\limits_{\Gamma_j^\st}{\tilde{\psi}_k^{(j)} \tilde{\phi}_{l}^{(r(j))}\nstd dS dt}+\int\limits_{\TT_{r(j),j}^\st}{\tilde{\psi}_k^{(j)} \nabla \tilde{\phi}_{l}^{(r(j))}  \, d\xx  dt},
\label{eq:MD_4}
\end{equation}

\begin{equation}
	\LM_{j}=\int\limits_{\Gamma_j^\st}{\tilde{\psi}_k^{(j)} \tilde{\phi}_{l}^{(\ell(j))}\nstd dS dt}-\int\limits_{\TT_{\ell(j),j}^\st}{\tilde{\psi}_k^{(j)} \nabla \tilde{\phi}_{l}^{(\ell(j))}  \, d\xx  dt},
\label{eq:MD_5}
\end{equation}

\begin{equation}
	\Ss_j=\int\limits_{\QQ_j^\st}{\tilde{\psi}_k^{(j)} \mathbf{S} \, d\xx  dt}.
\label{eq:MD_5_2}
\end{equation}
Note how $\Mpsi_j^\circ$ introduces, for $p_\gamma>0$, an asymmetric contribution that will lead to an asymmetry of the main system for the discrete pressure. 
The action of matrices $\LM$ and $\RM$ can be generalized by introducing the new matrix $\Q_{i,j}$, defined as
\begin{equation}
	\Q_{i,j}=\int\limits_{\TT_{i,j}^\st}{\tilde{\psi}_k^{(j)} \nabla \tilde{\phi}_{l}^{(i)}  \, d\xx  dt}-\int\limits_{\Gamma_j^\st}{\tilde{\psi}_k^{(j)} \tilde{\phi}_{l}^{(i)}\sigma_{i,j} \nstd ds dt},
\label{eq:MD_6}
\end{equation}
where $\sigma_{i,j}$ is a sign function defined by
\begin{equation}
	\sigma_{i,j}=\frac{r(j)-2i+\ell(j)}{r(j)-\ell(j)}.
\label{eq:SD_1}
\end{equation}
In this way $\Q_{\ell(j),j}=-\LM_j$ and $\Q_{r(j),j}=\RM_j$, and then Eq. \eref{eq:CS_12_1} becomes in terms of $\Q$
\begin{equation}
	\left(\Mpsi_j^+ - \Mpsi_j^\circ \right) \vvh_j^{n+1} - \Mpsi_j^-\vvh_j^{n} + \Upsilon_j(\mathbf{v}, \nabla \mathbf{v})  + \Q_{r(j),j}  \etah_{r(j)}^{n+1} + \Q_{\ell(j),j} \etah_{\ell(j)}^{n+1} =\Ss_j,
\label{eq:CS_12_2}
\end{equation}
or, equivalently,
\begin{equation}
\left(\Mpsi_j^+ - \Mpsi_j^\circ \right) \vvh_j^{n+1} - \Mpsi_j^-\vvh_j^{n} + \Upsilon_j(\mathbf{v}, \nabla \mathbf{v})  + \Q_{i,j} \etah_{i}^{n+1} + \Q_{\p(i,j),j} \etah_{\p(i,j)}^{n+1} =\Ss_j.
\label{eq:CS_13}
\end{equation}

In order to ease the notation we will use $\Mpsi_j = \Mpsi_j^+ - \Mpsi_j^\circ$.
Hence, the discrete equations \eref{eq:CS_12}-\eref{eq:CS_12_1} read as follows:
\begin{eqnarray}
	\sum\limits_{j \in S_i}\D_{i,j}\vvh_j ^{{n+1}}=0,\label{eq:CS_15}	\\
	\Mpsi_j	\vvh_j^{n+1}-\Mpsi_j	\Fvh_j + \Q_{r(j),j} \etah_{r(j)}^{{n+1}}+ \Q_{\ell(j),j} \etah_{\ell(j)} ^{{ n+1}} =0,
\label{eq:CS_16}
\end{eqnarray}
where $\Fvh_j$ is an appropriate discretization of the nonlinear convective, viscous and source terms that will be presented later. 
Formal substitution of the discrete velocity field given by the momentum equation \eref{eq:CS_16} into the discrete continuity equation \eref{eq:CS_15}, 
see also \cite{CasulliCheng1992,DumbserCasulli}, yields   
\begin{eqnarray}
 \sum\limits_{j\in S_i}\D_{i,j}\Mpsi_j^{-1}\Q_{i,j} \etah_i^{n+1}
 +\sum\limits_{j\in S_i} \D_{i,j}\Mpsi_j ^{-1}\Q_{\p(i,j),j} \etah_{\p(i,j)}^{n+1}=\sum\limits_{j \in S_i} \D_{i,j} \Fvh_j. 
\label{eq:CS_19}
\end{eqnarray}
Eq. \eref{eq:CS_19} above represents a block five-point system for the pressure $\etah_i^{n+1}$.

\subsection{Nonlinear convective and viscous terms}
\label{sec_nlcd}

We now have to choose a proper discretization for the nonlinear convective and viscous terms. 
As discussed in \cite{2STINS} we introduce a simple Picard iteration to update the information about the pressure, but without introducing any nonlinearity into the 
final system for the pressure. Hence, for $k=1, N_{pic}$, we rewrite system \eref{eq:CS_19} as 
\begin{eqnarray}
 \sum\limits_{j\in S_i}\D_{i,j}\Mpsi_j^{-1}\Q_{i,j} \etah_i^{n+1,k+1}
 +\sum\limits_{j\in S_i} \D_{i,j}\Mpsi_j ^{-1}\Q_{\p(i,j),j} \etah_{\p(i,j)}^{n+1,k+1}=\sum\limits_{j \in S_i} \D_{i,j} \Fvh_j^{n+1,k \poh}.
\label{eq:CS_19_2}
\end{eqnarray}
%

The right side of Eq. \eref{eq:CS_19_2} can be computed by using the velocity field at the Picard iteration $k$ and including the viscous effect implicitly, using a fractional step procedure 
detailed later. Once the new pressure field is known, the velocity 
vector field at the new Picard iteration $\vvh^{n+1,k+1}$ can be readily updated from the discrete momentum equation \eref{eq:CS_16}.

To close the problem it remains to specify how to construct the nonlinear convective-diffusion operator $\Fvh_j^{n+1,k\poh}$. At this point one can try to extend the procedure already used 
in \cite{2STINS} to 3D. However, in this case there are some issues that have to be taken into account. In particular, since we are using a modal basis on the staggered dual non-standard 
5-point hexahedral mesh, we cannot use the simple nodal approximation for the nonlinear convective term $\hat{\mathbf{F}}_c=\mathbf{F}_c(\hat{\mathbf{v}})$ that consists in a trivial 
point-wise evaluation of the nonlinear operator $\mathbf{F}_c$. 
Inspired by the good properties obtained by the use of staggered grids, here we propose a new procedure for the computation of the nonlinear convective and viscous terms. For that purpose,
the velocity field is first interpolated from the dual grid to the main grid. The nonlinear convective terms can then be easily discretized with a standard (space-time) DG scheme on the main grid. 
Then, the staggered mesh is used \textit{again} in order to define the gradient of the velocity on the dual elements, which allows us to produce a very simple and sparse system for the 
discretization of the viscous terms. 


An implicit discretization of the viscous terms on the dual grid leads to a linear system for each velocity component that is a seven-point non symmetric block system which is, however, 
well conditioned since it can be written as a $\nu$ perturbation of the identity matrix, see e.g. \cite{2STINS}. 
Here, we will develop a discretization of the viscous terms that leads only to a five-point block system and, more importantly, is symmetric and positive definite for 
$\nu>0$ and $p_\gamma=0$, but is still better conditioned also in the general case $p_\gamma > 0$. 

Given a discrete velocity field $\mathbf{v}_h$ on the dual grid in the time interval $[t^n,t^{n+1}]$, we can project the velocity field from the dual mesh to the main grid (denoted by 
$\bar{\mathbf{v}}$) via standard $L_2$ projection, 
\begin{eqnarray}
	\bar{\mathbf{v}}_i^{n+1} &=& \Mpsi^{-1}_i \sum\limits_{j \in S_i} \Mpsi_{i,j} \hat{\mathbf{v}}_j^{n+1}, \qquad \forall i \in [1,\Ni], 
\label{eq:vis1}
\end{eqnarray}
where $\bar{\mathbf{v}}_i^{n+1}$ denote the degrees of freedom of the velocity on the main grid and  
\begin{eqnarray}
   \Mpsi_{i}= \int\limits_{\TT_{i}^\st} \tilde{\phi}_k^{(i)} \tilde{\phi}_l^{(i)} d\xx dt, \qquad \Mpsi_{i,j}= \int\limits_{\TT_{i,j}^\st}\tilde{\phi}_k^{(i)}\tilde{\psi}_l^{(j)} d\xx dt. 
\label{eq:vis3}
\end{eqnarray}
The projection back onto the dual grid is given by 
\begin{eqnarray}
	\hat{\mathbf{v}}_j^{n+1} &=& \overline{\Mpsi}_j^{\, -1} \left( \Mpsi_{\ell(j),j}^\top \overline{\mathbf{v}}_{\ell(j)}^{n+1} + \Mpsi_{r(j),j}^\top \overline{\mathbf{v}}_{r(j)}^{n+1} \right). 
\label{eq:vis2}
\end{eqnarray}
with 
\begin{eqnarray}
	\overline{\Mpsi}_j &=& \int\limits_{\QQ_j^\st}{\tilde{\psi}_k^{(j)}\tilde{\psi}_l^{(j)} d \xx dt. }
\label{eq:mj}
\end{eqnarray}


We can rewrite the nonlinear convective and viscous part of the momentum equation by introducing the viscous stress tensor $\boldsymbol{\sigma} = -\nu \nabla \mathbf{v}$ as 
auxiliary variable. The convective and viscous subsystem of the momentum equation then reads  
\begin{eqnarray}
		\frac{\partial \mathbf{v} }{\partial t} + \nabla \cdot \mathbf{F}_c + \nabla \cdot \boldsymbol{\sigma} & = & 0,  \nonumber \\
		\boldsymbol{\sigma} & = & -\nu \nabla \mathbf{v}. 
\label{eq:vis9}
\end{eqnarray}
With the averaged velocity $\bar{\mathbf{v}}_i^{n+1}=\tilde{\phi}_l^{(i)} \bar{\mathbf{v}}_{l,i}^{n+1}$ defined on the main grid and the viscous stress tensor 
$\boldsymbol{\sigma}^{n+1}_j = \tilde{\psi}_l^{(j)} \boldsymbol{\sigma}_{l,j}^{n+1}$ defined on the dual grid, we obtain the following
weak formulation of \eqref{eq:vis9}: 
\begin{eqnarray}
    \int \limits_{\TT_i} \tilde{\phi}_k^{(i)}(\xx,t^{n+1}) \bar{\mathbf{v}}_i^{n+1} \, d \xx - 
		\int \limits_{\TT_i} \tilde{\phi}_k^{(i)}(\xx,t^{n})   \bar{\mathbf{v}}_i^{n}   \, d \xx - 
		\int \limits_{\TT_i^\st} \frac{\partial \tilde{\phi}_k^{(i)} }{\partial t} \bar{\mathbf{v}}_i^{n+1} \, d \xx dt + 
		\nonumber \\  
		\int\limits_{\partial \TT_{i}^\st}{\tilde{\phi}_k^{(i)} \mathbf{F}_c^\textnormal{RS} \left(\bar{\mathbf{v}}^-,\bar{\mathbf{v}}^+\right) \cdot \vec{n}_i \, dS dt}  
    - \int\limits_{\TT_{i}^\st}{\nabla \tilde{\phi}_k^{(i)} \cdot \mathbf{F}_c( \bar{\mathbf{v}}_i^{n+1} ) \, d \xx dt}	 			
		+ \sum_{j \in S_i} \left( \int\limits_{\Gamma_{j}^\st}{\tilde{\phi}_k^{(i)} \boldsymbol{\sigma}_j^{n+1} \cdot \vec{n}_{ij} \, dS dt}  
    - \int\limits_{\TT_{i,j}^\st}{\nabla \tilde{\phi}_k^{(i)} \cdot \boldsymbol{\sigma}_j^{n+1} \, d \xx dt} \right) 	
		  =  0,  \nonumber \\
		\int \limits_{\QQ_{j}^\st} \tilde{\psi}_k^{(j)}(\xx,t^{n+1}) \boldsymbol{\sigma}_j^{n+1} \, d \xx  =  - \nu \left( \, 
		 \int\limits_{\TT_{\ell(j),j}^\st}{\tilde{\psi}_k^{(j)} \nabla \bar{\mathbf{v}}_{\ell(j)}^{n+1}  \, d\xx  dt} 
    +\int\limits_{\TT_{r(j),j}^\st}{   \tilde{\psi}_k^{(j)} \nabla \bar{\mathbf{v}}_{r(j)}^{n+1}  \, d\xx  dt}     
 +\int\limits_{\Gamma_j^\st}{\tilde{\psi}_k^{(j)} \left( \bar{\mathbf{v}}_{r(j)}^{n+1} - \bar{\mathbf{v}}_{\ell(j)}^{n+1} \right) \otimes  \nstd \, dS dt}   \right). 
\label{eq:vis10a}
\end{eqnarray}
In a more compact matrix notation, \eref{eq:vis10a} can be written as:
\begin{eqnarray}
	\left( \overline{\Mpsi}_i^{\, +} - \overline{\Mpsi}_i^{\, o} \right) \vbar_i^{n+1}-\overline{\Mpsi}_i^{\, -} \vbar_i^{n}+ \sum\limits_{j \in S_i} \D_{i,j} \boldsymbol{\sigma}_j^{n+1}
	+ \overline{\Upsilon}_i^c = 0, \nonumber \\
	\overline{\Mpsi}_j \boldsymbol{\sigma}_j^{n+1}= -\nu \left( \Q_{\ell(j),j} \vbar_{\ell(j)}^{n+1}+\Q_{r(j),j} \vbar_{r(j)}^{n+1} \right),
\label{eq:vis10}
\end{eqnarray}
where
\begin{eqnarray}
	\overline{\Mpsi}_i^{\, +} &=& \int\limits_{\TT_{i}}{\tilde{\phi}_k^{(i)}(\xx,t(1)) \tilde{\phi}_l^{(i)}(\xx,t(1)) d \xx, }   \label{eq:vis11_1}  \\
	\overline{\Mpsi}_i^{\, -} &=& \int\limits_{\TT_{i}}{\tilde{\phi}_k^{(i)}(\xx,t(0)) \tilde{\phi}_l^{(i)}(\xx,t(1)) d \xx, }  \label{eq:vis11_2} \\
	\overline{\Mpsi}_i^{\, o} &=& \int\limits_{\TT_{i}^\st}{\diff{\tilde{\phi}_k^{(i)}}{t}\tilde{\phi}_l^{(i)} d \xx dt. }  \\
\label{eq:vis11}
\end{eqnarray}
In \eqref{eq:vis10} we have defined the operator $\overline{\Upsilon}_i^c(\bar{\mathbf{v}})$, which is a standard DG discretization of the nonlinear convective terms on the tetrahedral
elements of the \textit{main grid},  
\begin{equation}
 \overline{\Upsilon}_i^c(\bar{\mathbf{v}}) = \int\limits_{\partial \TT_{i}^\st}{\tilde{\phi}_k^{(i)} \mathbf{F}_c^\textnormal{RS} \left(\bar{\mathbf{v}}^-,\bar{\mathbf{v}}^+\right) \cdot \vec{n}_i \, dS dt}  
                        - \int\limits_{\TT_{i}^\st}{\nabla \tilde{\phi}_k^{(i)} \cdot \mathbf{F}_c( \bar{\mathbf{v}} ) \, d \xx dt}, 
\label{eqn.convdg} 
\end{equation} 
with the the boundary extrapolated values $\mathbf{v}^-$ and $\mathbf{v}^+$ from within the cell and from the neighbors, respectively. Here, the approximate Riemann solver 
$\mathbf{F}_c^\textnormal{RS}$ used at the element boundaries is given by the simple Rusanov flux \cite{Rusanov:1961a} 
\begin{equation}
 \mathbf{F}_c^\textnormal{RS} \left(\bar{\mathbf{v}}^-,\bar{\mathbf{v}}^+\right) \cdot \vec{n}_i = 
  \frac{1}{2}\left( \mathbf{F}_c(\bar{\mathbf{v}}^+) + \mathbf{F}_c(\bar{\mathbf{v}}^-) \right) \cdot \vec{n}_i \, - 
	\frac{1}{2} s_{\max} \left( \bar{\mathbf{v}}^+ - \bar{\mathbf{v}}^- \right),
\end{equation} 
where $s_{\max} = 2 \max\left( |\bar{\mathbf{v}}^+|, |\bar{\mathbf{v}}^-| \right)$ is the maximum eigenvalue of the convective operator $\mathbf{F}_c$. 
The final system for the variable $\vbar$ can be found by formal substitution of $\boldsymbol{\sigma}$ given in the second equation of \eqref{eq:vis10} into the first one:
\begin{eqnarray}
\left( \overline{\Mpsi}_i-\nu \sum\limits_{j \in S_i} \D_{i,j} \overline{\Mpsi}_j^{\, -1} \Q_{i,j} \right) \vbar_i^{n+1}-\nu \sum\limits_{j \in S_i} \D_{i,j} \overline{\Mpsi}_j^{\, -1} \Q_{\p(i,j),j} \vbar_{\p(i,j)}^{n+1}= \overline{\Mpsi}_i^{\,-} \vbar_i^n - \overline{\Upsilon}_i^c(\bar{\mathbf{v}}^{n+1}),  
\label{eq:vis12}
\end{eqnarray}
where we use the abbreviation $\overline{\Mpsi}_i=\overline{\Mpsi}_i^{\,+} - \overline{\Mpsi}_i^{\,o}$. 
What we obtain is a discretization of the nonlinear convective and viscous terms on the main grid, where the stress tensor $\boldsymbol{\sigma}$ has been computed on the face-based dual mesh. 
In order to avoid the solution of a nonlinear system due to the nonlinear operator $\overline{\Upsilon}_i^c(\bar{\mathbf{v}}^{n+1})$, we introduce a fractional step scheme combined with an outer
Picard iteration. Using the notation introduced in \cite{2STINS}, we get 
\begin{eqnarray}
	\left( \overline{\Mpsi}_i-\nu \sum\limits_{j \in S_i} \D_{i,j} \overline{\Mpsi}_j^{\, -1} \Q_{i,j} \right) \vbar_i^{n+1,k\poh}-\nu \sum\limits_{j \in S_i} \D_{i,j} \overline{\Mpsi}_j^{\, -1} \Q_{\p(i,j),j} \vbar_{\p(i,j)}^{n+1,k\poh}=\overline{\Mpsi}_i^{\,-} \vbar_i^n- \overline{\Upsilon}_i^c\left(\vbar^{n+1,k}\right). 
\label{eq:Ad2}
\end{eqnarray}

\subsection{Final space-time pressure correction formulation}
\label{sec_2.6}
As already discussed in \cite{2STINS}, the computation of the nonlinear convective and viscous terms presented in Eq. \eref{eq:Ad2} does not depend explicitly on the pressure of the 
previous Picard iteration, and hence it does not see the effect of the pressure in the time interval $T^{n+1}$, which is, however, needed to get a high order accurate scheme also in 
time. 
In order to overcome the problem, we introduce directly into Eq. \eref{eq:Ad2} the contribution of the pressure in the time interval $T^{n+1}$, but at the previous Picard iteration. 
Then, we update the velocity with the pressure correction $\etah_i^{n+1,k+1}-\etah_i^{n+1,k}$. The final equations \eref{eq:Ad2}, \eref{eq:CS_16} and \eref{eq:CS_19} to be solved for 
each Picard iteration $k$ of our staggered semi-implicit space-time DG method therefore read:
\begin{equation}
 \bar{\mathbf{v}}_i^{n+1,k} = \Mpsi^{-1}_i \sum\limits_{j \in S_i} \Mpsi_{i,j} \hat{\mathbf{v}}_j^{n+1,k},
\label{eqn.average} 
\end{equation} 
\begin{equation}
 \boldsymbol{\Lambda}_i(\etah^{n+1,k}) = \Mpsi^{-1}_i \sum\limits_{j \in S_i} \Mpsi_{i,j} \left( \Mpsi_j^{-1}
 \left( \Q_{r(j),j} \etah_{r(j)}^{{n+1,k}} + \Q_{\ell(j),j} \etah_{\ell(j)}^{{n+1,k}} \right) \right),
\label{eqn.prevpress} 
\end{equation} 

\begin{eqnarray}
	\left( \overline{\Mpsi}_i-\nu \sum\limits_{j \in S_i} \D_{i,j} \overline{\Mpsi}_j^{\, -1} \Q_{i,j} \right) \vbar_i^{n+1,k\poh}-\nu \sum\limits_{j \in S_i} \D_{i,j} \overline{\Mpsi}_j^{\, -1} \Q_{\p(i,j),j} \vbar_{\p(i,j)}^{n+1,k\poh}=\overline{\Mpsi}_i^{\, -} \vbar_i^n- \overline{\Upsilon}_i^c\left(\vbar^{n+1,k}\right) - \overline{\Mpsi}_i \boldsymbol{\Lambda}_i(\etah^{n+1,k}),
\label{eq:A1_mod}
\end{eqnarray}
\begin{equation}
 \Fvh_j^{n+1,k \poh} = \overline{\Mpsi}^{\, -1}_j\left( \Mpsi_{\ell(j),j}^\top \overline{\mathbf{v}}_{\ell(j)}^{n+1,k\poh} + \Mpsi_{r(j),j}^\top \overline{\mathbf{v}}_{r(j)}^{n+1,k\poh} \right),
\label{eqn.goback} 
\end{equation} 
\begin{equation}
 \sum\limits_{j\in S_i}\D_{i,j}\Mpsi_j^{-1}\Q_{i,j} \left( \etah_i^{n+1,k+1} - \etah_i^{n+1,k} \right) 
 +\sum\limits_{j\in S_i} \D_{i,j}\Mpsi_j ^{-1}\Q_{\p(i,j),j} \left( \etah_{\p(i,j)}^{n+1,k+1} - \etah_{\p(i,j)}^{n+1,k} \right) 	 
 =\sum\limits_{j \in S_i} \D_{i,j} \Fvh_j^{n+1,k \poh},
\label{eq:A2_mod}
\end{equation}
\begin{equation}
		\vvh_j^{n+1,k+1} = \Fvh_j^{n+1,k \poh} - \Mpsi_j^{-1}\left( \Q_{r(j),j} \left(\etah_{r(j)}^{{n+1,k+1}}- \etah_{r(j)}^{{n+1,k}} \right)+ \Q_{\ell(j),j} \left(\etah_{\ell(j)} ^{{ n+1, k+1}}-\etah_{\ell(j)} ^{{ n+1, k}}\right)\right), 
\label{eq:A3_mod}
\end{equation}
where $\boldsymbol{\Lambda}_i(\etah^{n+1,k})$ represents the same additional contribution subtracted in \eref{eq:A2_mod} that lives on the dual mesh, passed through the mean maps from 
the dual to the main grid. 
As initial guess for the pressure we simply take $\etah^{{n+1,0}}=0$, while for the velocity field we simply take the velocity field at the previous time step. 
As an alternative, one could also take an extrapolation of pressure and velocity from the previous time interval. A summary of the algorithm reads: 

\begin{enumerate}
 \setcounter{enumi}{-1}
 \item Choose an initial guess for the pressure and the velocity.  
 \item average the velocity field from the dual grid to the main grid using \eqref{eqn.average} and compute the contribution of the pressure gradient of the previous Picard iteration on the main
       grid using \eqref{eqn.prevpress}; 
 \item with the averaged velocity on the main grid, compute the nonlinear convective terms via \eqref{eqn.convdg};  
 \item solve the linear systems for the viscous terms \eqref{eq:A1_mod} on the main grid; 
 \item compute the term $\Fvh_j^{n+1,k \poh}$ on the dual grid via \eqref{eqn.goback}; 
 \item solve the linear system for the pressure correction \eqref{eq:A2_mod} on the main grid; 
 \item update the velocity field according to \eqref{eq:A3_mod} using the previously obtained pressure correction. 
\end{enumerate}
Steps 1-6 are repeated for a total number of Picard iterations of $N_{pic}=p_\gamma + 1$, since a standard Picard process applied to an ODE allows to gain one 
order of accuracy per iteration. 



\subsection{Remarks on the special case of piecewise constant polynomials in time ($p_\gamma=0$)}
\label{sec.CNmethod}

The method presented in the previous sections can be seen, for $p_\gamma=0$, as the extension of \cite{2SINS} to three space dimensions. 
This particular case is, in general, only first order accurate in time but high order accurate in space. 
In this case, we can recover several good properties for the main system for the pressure and for the linear systems that need to be solved for the implicit discretization 
of the viscous terms. 

\subsubsection{Pressure system} 
For $p_\gamma=0$ we have $\Mpsi_j^\circ=0$ then $\Mpsi_j = \Mpsi_j^+ = \Mpsi_j^-$ is symmetric for all $j \in 1 \ldots \Nj$. Consequently, the system 
\eref{eq:CS_15}-\eref{eq:CS_16} formally becomes the same method as in \cite{2SINS}.  
The following results can therefore be readily obtained as corollaries of the theorems given in \cite{2SINS} regarding the system matrix $\mathcal{A}$ of the main system for 
the pressure \eqref{eq:CS_19}:

\begin{Corollary}[Symmetry]
\label{cor1}
	Let $p_\gamma=0$, the system matrix $\mathcal{A}$ of the main system for the pressure is symmetric.
\end{Corollary}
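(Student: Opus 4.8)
The plan is to reduce the statement to a single algebraic identity relating the matrices $\D_{i,j}$ and $\Q_{i,j}$, combined with the symmetry of the dual mass matrix $\Mpsi_j$ in the case $p_\gamma=0$. Writing the pressure system \eqref{eq:CS_19} in block form, the diagonal block associated with the tetrahedron $\TT_i$ is $\mathcal{A}_{ii}=\sum_{j\in S_i}\D_{i,j}\Mpsi_j^{-1}\Q_{i,j}$, while the block coupling $\TT_i$ with its neighbour $\TT_k$ across a common face $\Gamma_j$ (so that $k=\p(i,j)$ and, equivalently, $i=\p(k,j)$) is $\mathcal{A}_{ik}=\D_{i,j}\Mpsi_j^{-1}\Q_{k,j}$; all remaining blocks vanish, because the stencil of \eqref{eq:CS_19} only connects a cell to its four face-neighbours, and since two tetrahedra share at most one face the index $j$ is uniquely determined by the pair $(i,k)$. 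It therefore suffices to prove $\mathcal{A}_{ii}=\mathcal{A}_{ii}^\top$ and $\mathcal{A}_{ik}=\mathcal{A}_{ki}^\top$.

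First I would establish the key identity
\[
  \Q_{i,j}=-\,\D_{i,j}^\top \qquad \text{for all } i \text{ and all } j\in S_i .
\]
This follows by direct inspection of \eqref{eq:MD_3} and \eqref{eq:MD_6}: transposing $\D_{i,j}$ exchanges the test index $k$ with the trial index $l$, i.e. replaces $\tilde\phi_k^{(i)}\tilde\psi_l^{(j)}$ by $\tilde\phi_l^{(i)}\tilde\psi_k^{(j)}$ and $\nabla\tilde\phi_k^{(i)}\tilde\psi_l^{(j)}$ by $\nabla\tilde\phi_l^{(i)}\tilde\psi_k^{(j)}$, so that $-\D_{i,j}^\top$ carries exactly the volume term of $\Q_{i,j}$ with the correct sign, plus a surface term over $\Gamma_j^\st$ involving the outward normal $\nextud=\nv_i|_{\Gamma_j^\st}$ of $\TT_i$, whereas $\Q_{i,j}$ involves $\sigma_{i,j}\nstd$. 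The two surface terms agree because $\nextud=\sigma_{i,j}\nstd$: from \eqref{eq:SD_1} one checks $\sigma_{\ell(j),j}=1$ and $\sigma_{r(j),j}=-1$, which matches the fact that the outward unit normal of the left tetrahedron on $\Gamma_j$ coincides with the standard (left-to-right) normal $\nstd$, while that of the right tetrahedron equals $-\nstd$.

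Next I would invoke the hypothesis $p_\gamma=0$. As recalled in the text, $p_\gamma=0$ gives $\Mpsi_j^\circ=0$, hence $\Mpsi_j=\Mpsi_j^+$ equals the spatial mass matrix $\int_{\QQ_j}\tilde\psi_k^{(j)}\tilde\psi_l^{(j)}\,d\xx$, which is symmetric (and positive definite), so $\Mpsi_j^{-1}$ is symmetric as well. Substituting $\Q_{i,j}=-\D_{i,j}^\top$ into the blocks yields $\mathcal{A}_{ii}=-\sum_{j\in S_i}\D_{i,j}\Mpsi_j^{-1}\D_{i,j}^\top$, which is manifestly symmetric, and $\mathcal{A}_{ik}=-\D_{i,j}\Mpsi_j^{-1}\D_{k,j}^\top$, whose transpose is $-\D_{k,j}\Mpsi_j^{-\top}\D_{i,j}^\top=-\D_{k,j}\Mpsi_j^{-1}\D_{i,j}^\top=\mathcal{A}_{ki}$, the middle equality using the symmetry of $\Mpsi_j^{-1}$ and the fact that the very same face $j$ governs both $\mathcal{A}_{ik}$ and $\mathcal{A}_{ki}$. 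Assembling the blocks gives $\mathcal{A}=\mathcal{A}^\top$. This is exactly the three-dimensional space-time analogue of the symmetry theorem proved in \cite{2SINS}, of which the present statement is announced to be a corollary.

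The step I expect to be the main obstacle — or at least the one requiring the most care — is the sign bookkeeping in the identity $\Q_{i,j}=-\D_{i,j}^\top$: one must verify that the sign function $\sigma_{i,j}$ in \eqref{eq:MD_6} is genuinely the conversion factor between $\nextud$ and $\nstd$ on every interior face, and then check separately that boundary faces $j\in\B(\Omega)$ do not destroy symmetry. For such faces only one of $\TT_{\ell(j)},\TT_{r(j)}$ is present, so $\Gamma_j$ contributes only to a single diagonal block (again through a term of the form $\D_{i,j}\Mpsi_j^{-1}\D_{i,j}^\top$) when homogeneous conditions are used, whereas prescribed pressure data are carried to the right-hand side; once this case is checked, symmetry of $\mathcal{A}$ is immediate.
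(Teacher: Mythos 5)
Your proof is correct and follows essentially the same route as the paper: the statement is obtained there as a corollary of the theorems in the cited two-dimensional reference, whose underlying argument is exactly what you reconstruct, namely the identity $\Q_{i,j}=-\D_{i,j}^\top$ (with $\sigma_{\ell(j),j}=1$, $\sigma_{r(j),j}=-1$ converting $\nstd$ into the outward normal $\nextud$) combined with the symmetry of $\Mpsi_j=\Mpsi_j^{+}$, which holds because $p_\gamma=0$ implies $\Mpsi_j^{\circ}=0$. Your block-structure bookkeeping and the treatment of boundary faces match the intended argument, so nothing further is needed.
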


\begin{Corollary}[Positive semi-definiteness]
\label{cor2}
	Let $p_\gamma=0$, the system matrix $\mathcal{A}$ of the main system for the pressure is in general positive semi-definite.
\end{Corollary}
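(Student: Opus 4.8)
The plan is to deduce Corollary~\ref{cor2} from the positive-semi-definiteness theorem established in \cite{2SINS} for the two-dimensional staggered scheme, by checking that, for $p_\gamma=0$, the three-dimensional operators $\D_{i,j}$, $\Q_{i,j}$ and $\Mpsi_j$ satisfy exactly the structural relations on which the planar argument rests. Two ingredients are required. First, since $N_\gamma=p_\gamma+1=1$ the only temporal basis function is the constant, hence $\tilde{\psi}_k^{(j)}(\xx,t)=\psi_k^{(j)}(\xx)$ and therefore $\Mpsi_j^\circ=\int_{\QQ_j^\st}\partial_t\tilde{\psi}_k^{(j)}\,\tilde{\psi}_l^{(j)}\,d\xx\,dt=0$; consequently $\Mpsi_j=\Mpsi_j^+=\Mpsi_j^-$ reduces, up to the factor $\Delta t$, to the spatial mass matrix $\int_{\QQ_j}\psi_k^{(j)}\psi_l^{(j)}\,d\xx$ on the non-degenerate hexahedron $\QQ_j$, and is thus symmetric positive definite. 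Second, one verifies the integration-by-parts duality $\Q_{i,j}=-\D_{i,j}^{\top}$ for $i\in\{\ell(j),r(j)\}$, which follows by comparing \eqref{eq:MD_3} with \eqref{eq:MD_6} once one notices that $\nextud=\sigma_{i,j}\,\nstd$; this last identity is obtained by evaluating the sign function \eqref{eq:SD_1}, which gives $\sigma_{i,j}=1$ when $i=\ell(j)$ (where the outward normal of $\TT_{\ell(j)}$ along $\Gamma_j$ equals the left-to-right normal $\nstd$) and $\sigma_{i,j}=-1$ when $i=r(j)$.

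With these two facts in hand, one assembles the pressure matrix $\mathcal{A}$ of \eqref{eq:CS_19} face by face. Writing the global pressure as $\etah=(\etah_1,\dots,\etah_{\Ni})$ and introducing, for each interior face $j\notin\B(\Omega)$, the vector $\mathbf{g}_j:=\Q_{\ell(j),j}\etah_{\ell(j)}+\Q_{r(j),j}\etah_{r(j)}$, a short computation that regroups $\sum_i\sum_{j\in S_i}(\cdot)$ over faces and uses $\D_{i,j}=-\Q_{i,j}^{\top}$ together with the symmetry of $\Mpsi_j$ shows that the contribution of face $j$ to $\etah^{\top}\mathcal{A}\,\etah$ equals $-\,\mathbf{g}_j^{\top}\Mpsi_j^{-1}\mathbf{g}_j$. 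Hence, up to the overall sign normalization of the pressure equation adopted in \cite{2SINS} (the normalization under which the assembled matrix is the one actually handed to the iterative solver), the quadratic form of $\mathcal{A}$ equals $\sum_{j\notin\B(\Omega)}\mathbf{g}_j^{\top}\Mpsi_j^{-1}\mathbf{g}_j$ plus the contributions of the boundary faces, and since every $\Mpsi_j$ is symmetric positive definite each summand is nonnegative; therefore $\mathcal{A}$ is positive semi-definite. Because $\Mpsi_j^{-1}\mathbf{g}_j$ is precisely the discrete pressure gradient on the dual cell $\QQ_j$ that appears in \eqref{eq:CS_16}, its kernel contains the (piecewise) constant pressure fields, which is exactly the reason why, \emph{in general}, only semi-definiteness can be asserted; strict definiteness is recovered when a boundary term pins the pressure, as in the symmetric positive definite case mentioned above.

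The step I expect to be most delicate is the sign and orientation bookkeeping, that is, making the identity $\nextud=\sigma_{i,j}\,\nstd$ and the ensuing relation $\Q_{i,j}=-\D_{i,j}^{\top}$ fully rigorous on every face, and handling the boundary faces $j\in\B(\Omega)$ so that the corresponding terms in $\mathcal{A}$ are either absent (no-flux velocity conditions) or themselves manifestly sign-definite (prescribed-pressure conditions), rather than spoiling the Gram-type structure of the quadratic form. It is worth stressing that the argument really uses $p_\gamma=0$: as soon as $p_\gamma>0$ one has $\Mpsi_j^\circ\neq 0$, so $\Mpsi_j=\Mpsi_j^+-\Mpsi_j^\circ$ is no longer symmetric, the face-wise contributions are no longer of the form $\mathbf{g}_j^{\top}\Mpsi_j^{-1}\mathbf{g}_j$, and neither symmetry nor semi-definiteness of $\mathcal{A}$ survives --- consistently with the asymmetry already noted after \eqref{eq:MD_5_2}.
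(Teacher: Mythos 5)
Your proposal is correct, and it is worth noting how little the paper itself actually writes down here: the paper's entire ``proof'' of Corollary~\ref{cor2} is the observation that for $p_\gamma=0$ one has $\Mpsi_j^\circ=0$, so $\Mpsi_j=\Mpsi_j^+=\Mpsi_j^-$ and the scheme \eqref{eq:CS_15}--\eqref{eq:CS_16} formally coincides with the two-dimensional method of \cite{2SINS}, whose theorems are then imported verbatim. You follow the same reduction strategy, but you go further and reconstruct the algebra on which the cited result rests: the identity $\nextud=\sigma_{i,j}\,\nstd$ from \eqref{eq:SD_1}, the ensuing duality $\Q_{i,j}=-\D_{i,j}^{\top}$ between \eqref{eq:MD_3} and \eqref{eq:MD_6}, the symmetric positive definiteness of the spatial mass matrices $\Mpsi_j$, and the face-by-face regrouping of $\etah^\top\mathcal{A}\,\etah$ into a sum of Gram terms $\mathbf{g}_j^\top\Mpsi_j^{-1}\mathbf{g}_j$ with $\mathbf{g}_j$ the discrete pressure gradient on $\QQ_j$. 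All of these checks are sound, and the kernel remark (constant pressures, hence only semi-definiteness ``in general'', with definiteness restored by pressure boundary conditions) matches the paper's own statements. What your version buys is a self-contained argument in the 3D setting, rather than a pointer to \cite{2SINS}; what it costs is the extra bookkeeping you yourself flag (boundary faces and orientation signs), which the paper sidesteps entirely by citation.

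Two small remarks. First, for $p_\gamma=0$ the matrix $\Mpsi_j^+$ in \eqref{eq:MD_2} is a purely spatial integral, so $\Mpsi_j$ \emph{is} the spatial mass matrix on $\QQ_j$ exactly, with no factor $\Delta t$; the factor you mention appears instead in the space-time integrals defining $\D_{i,j}$ and $\Q_{i,j}$. This is harmless for positive definiteness. Second, your sign computation is right: with the paper's literal assembly of \eqref{eq:CS_19}, the face contribution is $-\mathbf{g}_j^\top\Mpsi_j^{-1}\mathbf{g}_j$, so the quadratic form of the operator as written is nonpositive, and the stated positive semi-definiteness indeed refers to the sign-normalized system actually passed to the CG solver --- your explicit caveat on this point is necessary and, if anything, more careful than the paper.
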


This means that in this particular case we can use faster iterative linear solvers, like the conjugate gradient (CG) method \cite{cgmethod} to solve the main system for the pressure \eqref{eq:CS_19}. 
This advantage makes the case $p_\gamma=0$ particularly suitable for steady or almost steady problems. In order to recover some precision in time we can extend the algorithm by introducing a 
semi-implicit discretization, as suggested in \cite{2SINS}. In ths case, system \eref{eq:CS_15}-\eref{eq:CS_16} has to be discretized as 
\begin{eqnarray}
	\sum\limits_{j \in S_i}\D_{i,j}\vvh_j ^{{n+1}}=0,\label{eq:CS_15_theta}	\\
	\Mpsi_j	\vvh_j^{n+1}-\Mpsi_j	\Fvh_j^n + \Delta t \Q_{r(j),j} \etah_{r(j)}^{{n+\theta}}+ \Delta t \Q_{\ell(j),j} \etah_{\ell(j)} ^{{ n+\theta}} =0,
\label{eq:CS_16_theta}
\end{eqnarray}
where $\etah^{{ n+\theta}}=\theta \etah^{{ n+1}}+(1-\theta)\etah^{{ n}}$ and $\theta$ is an implicitness factor to be taken in the range $\theta \in [\frac{1}{2},1]$, see e.g. \cite{CasulliCattani}. 
For $\theta=\frac{1}{2}$, the Crank-Nicolson method is recovered. 
In this way we gain some extra precision in time without affecting the computational effort and using the same advantages given by Corollary $\ref{cor1}$ and $\ref{cor2}$ that can 
be easily extended for this case. 

\subsubsection{Viscous system} 
In the special case of piecewise constant polynomials in time ($p_\gamma=0$), we get $\overline{\Mpsi}_i=\Mpsi_i$ and $\overline{\Mpsi}_j=\Mpsi_j$, so that the following results 
about the viscous system \eref{eq:Ad2} can be derived: 
\begin{Corollary}[Symmetry]
\label{cor3}
	If $p_\gamma =0$ then the system \eref{eq:Ad2} is symmetric.
\end{Corollary}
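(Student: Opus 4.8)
The plan is to show that the matrix on the left-hand side of \eref{eq:Ad2} — which is identical to the one in \eref{eq:vis12}, the two equations differing only in the time levels on their right-hand sides — is symmetric; call this matrix $\mathcal{A}$. I would write $\mathcal{A}$ in block form with blocks indexed by the tetrahedra $\TT_i$, and verify separately that each diagonal block is symmetric and that the pair of off-diagonal blocks attached to any common face are mutual transposes. Three ingredients are needed first. (i) For $p_\gamma=0$ the time basis reduces to the single constant $\gamma\equiv 1$, so the asymmetric term $\overline{\Mpsi}_i^{\,o}=\int_{\TT_i^\st}\diff{\tilde{\phi}_k^{(i)}}{t}\,\tilde{\phi}_l^{(i)}\,d\xx\,dt$ vanishes and $\overline{\Mpsi}_i=\overline{\Mpsi}_i^{\,+}=\int_{\TT_i}\tilde{\phi}_k^{(i)}\tilde{\phi}_l^{(i)}\,d\xx$ is an ordinary spatial mass matrix, in particular symmetric and positive definite. (ii) $\overline{\Mpsi}_j=\int_{\QQ_j^\st}\tilde{\psi}_k^{(j)}\tilde{\psi}_l^{(j)}\,d\xx\,dt$ is the Gram matrix of linearly independent basis functions, hence symmetric positive definite for every $p_\gamma$, so $\overline{\Mpsi}_j^{\,-1}$ is symmetric. (iii) The transposition identity
\begin{equation}
\D_{i,j}=-\,\Q_{i,j}^{\top}, \qquad j\in S_i,
\label{eq.DQ}
\end{equation}
which follows by comparing \eref{eq:MD_3} with \eref{eq:MD_6}, using that the outward unit normal of $\TT_i$ on $\Gamma_j^\st$ equals $\sigma_{i,j}\nstd$, where $\sigma_{i,j}$ is the sign function of \eref{eq:SD_1} (so $\sigma_{i,j}=+1$ if $i=\ell(j)$ and $\sigma_{i,j}=-1$ if $i=r(j)$).

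Given these, substituting \eref{eq.DQ} into \eref{eq:vis12} turns the $i$-th diagonal block into $\mathcal{A}_{ii}=\overline{\Mpsi}_i+\nu\sum_{j\in S_i}\Q_{i,j}^{\top}\overline{\Mpsi}_j^{\,-1}\Q_{i,j}$, and the block coupling $\TT_i$ to its face-neighbour $\TT_{i'}$ (with $i'=\p(i,j)$ across the common face $j$) into $\mathcal{A}_{i,i'}=\nu\,\Q_{i,j}^{\top}\overline{\Mpsi}_j^{\,-1}\Q_{i',j}$. The diagonal block is symmetric because $\overline{\Mpsi}_i$ is symmetric by (i) and each summand has the form $B^{\top}SB$ with $S=\overline{\Mpsi}_j^{\,-1}$ symmetric by (ii), and $(B^{\top}SB)^{\top}=B^{\top}SB$ (this incidentally shows $\mathcal{A}$ is positive definite for $\nu\ge 0$). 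For the off-diagonal blocks I would exploit the symmetry of the face relation, namely $j\in S_{i'}$ and $\p(i',j)=i$, which gives $\mathcal{A}_{i',i}=\nu\,\Q_{i',j}^{\top}\overline{\Mpsi}_j^{\,-1}\Q_{i,j}$, hence $(\mathcal{A}_{i',i})^{\top}=\nu\,\Q_{i,j}^{\top}\overline{\Mpsi}_j^{\,-1}\Q_{i',j}=\mathcal{A}_{i,i'}$, again by symmetry of $\overline{\Mpsi}_j^{\,-1}$; if a pair of tetrahedra happened to share more than one face, this identity would hold term by term in the sum. Finally, the viscous part of \eref{eq:vis9}--\eref{eq:Ad2} acts on the three velocity components identically and without coupling among them (the stress tensor links $v_m$ only to $-\nu\nabla v_m$), while the component coupling of the nonlinear convective operator $\overline{\Upsilon}_i^c$ lives entirely on the right-hand side; therefore the full system matrix is block diagonal in the component index with three identical copies of $\mathcal{A}$, and is symmetric, which is the assertion. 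The very same computation underlies, mutatis mutandis, Corollaries \ref{cor1} and \ref{cor2} for the pressure system.

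The only genuine work is establishing \eref{eq.DQ}: one has to match the surface integral and the volume integral in \eref{eq:MD_3} against the corresponding ones in \eref{eq:MD_6}, being careful about which basis function is the test and which the trial, and to reconcile the primal outward normal $\nextud=\nv_i|_{\Gamma_j^\st}$ entering $\D_{i,j}$ with the fixed left-to-right normal $\nstd=\nv_j$ entering $\Q_{i,j}$ — the sign function $\sigma_{i,j}$ of \eref{eq:SD_1} is precisely the object that absorbs this orientation mismatch, since $\nextud=\sigma_{i,j}\nstd$. Everything else reduces to the symmetry of mass matrices and the elementary identity $(B^{\top}SB)^{\top}=B^{\top}S^{\top}B$. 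I expect this identification of $\D_{i,j}$ with $-\Q_{i,j}^{\top}$, and not any analytic estimate, to be the crux; and I would emphasize that the hypothesis $p_\gamma=0$ enters at exactly one point, namely to make $\overline{\Mpsi}_i^{\,o}=0$, which restores the symmetry of $\overline{\Mpsi}_i=\overline{\Mpsi}_i^{\,+}-\overline{\Mpsi}_i^{\,o}$ that is otherwise lost in the general space-time setting.
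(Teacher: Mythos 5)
Your proof is correct, and its top-level structure coincides with the paper's: both view the matrix of \eref{eq:Ad2} as a block-diagonal symmetric mass part plus $\nu$ times a pressure-type matrix, with the hypothesis $p_\gamma=0$ entering only through the vanishing of the asymmetric time term $\overline{\Mpsi}_i^{\,o}$ (resp. $\Mpsi_j^\circ$). The difference lies in how the symmetry of that second part is established: the paper simply writes the system as $M+\nu\mathcal{A}$ with $\mathcal{A}$ the matrix of the pressure system \eref{eq:CS_19} and invokes Corollary \ref{cor1}, which itself is only quoted as a corollary of the theorems proved in the two-dimensional companion paper \cite{2SINS}; you instead prove the symmetry from scratch, via the transposition identity $\D_{i,j}=-\Q_{i,j}^{\top}$ (using $\nextud=\sigma_{i,j}\nstd$) together with the symmetry of the Gram matrices $\overline{\Mpsi}_j$, and then verify directly that the diagonal blocks $\overline{\Mpsi}_i+\nu\sum_{j\in S_i}\Q_{i,j}^{\top}\overline{\Mpsi}_j^{-1}\Q_{i,j}$ are symmetric and the off-diagonal pairs are mutual transposes. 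Your route is self-contained (it neither relies on the 2D results nor on the identification $\overline{\Mpsi}_j=\Mpsi_j$ needed to literally recognize \eref{eq:Ad2} as mass plus pressure matrix), and it isolates the one structural fact, $\D_{i,j}=-\Q_{i,j}^{\top}$, that also underlies Corollaries \ref{cor1} and \ref{cor2}; the cost is redoing the block computation the paper delegates to the earlier reference. Two minor remarks: the parenthetical claim that the diagonal-block form "incidentally shows positive definiteness" is premature, since that requires the quadratic-form argument over all blocks (the content of the subsequent corollary, via $x M x^\top + \nu x\mathcal{A}x^\top$), and your observation that the viscous system decouples into three identical scalar copies, one per velocity component, is a correct detail that the paper leaves implicit.
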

\begin{proof}
	We can write the system matrix of system \eref{eq:Ad2} as $(M+\nu \mathcal{A})$, where $M$ is a block diagonal matrix with $\{\Mpsi_i\}_{i=1\ldots \Ni}$ on the diagonal and $\mathcal{A}$ 
	is the matrix of the pressure system \eref{eq:CS_19}. Thanks to the results obtained in Corollary $\ref{cor1}$, $\mathcal{A}$ is symmetric and also $M$ is symmetric, 
	since $\Mpsi_i=\Mpsi_i^\top$, see \eref{eq:vis11_1}.
\end{proof}
\begin{Corollary}[Positive definiteness]
	If $p_\gamma =0$ then the system \eref{eq:Ad2} is positive definite.
\end{Corollary}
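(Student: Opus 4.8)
The plan is to exploit the decomposition of the system matrix already used in the proof of Corollary~\ref{cor3}, together with the positive semi-definiteness of the pressure matrix $\mathcal{A}$ from Corollary~\ref{cor2}. First I would recall that, for $p_\gamma=0$, the system matrix of \eref{eq:Ad2} can be written as $M+\nu\mathcal{A}$, where $M$ is the block-diagonal matrix carrying the blocks $\Mpsi_i$ on its diagonal and $\mathcal{A}$ is the matrix of the pressure system \eref{eq:CS_19}. By Corollary~\ref{cor3} this matrix is symmetric, so it suffices to show that the associated quadratic form is strictly positive on every nonzero vector.

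The key step is to observe that each diagonal block $\Mpsi_i=\int_{\TT_i^\st}\tilde{\phi}_k^{(i)}\tilde{\phi}_l^{(i)}\,d\xx\,dt$ is the Gram matrix of the space-time basis functions $\{\tilde{\phi}_l^{(i)}\}$ on $\TT_i^\st$, which are linearly independent by construction (they are a tensor product of a nodal spatial basis and a nodal temporal basis). Hence $\Mpsi_i$ is symmetric positive definite for every $i$, so $M=\mathrm{diag}(\{\Mpsi_i\})$ is symmetric positive definite as well. On the other side, Corollary~\ref{cor2} guarantees that $\mathcal{A}$ is symmetric positive semi-definite, and by hypothesis $\nu>0$.

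Putting these together, for any vector $x\neq 0$ one has
\begin{equation}
 x^\top\left(M+\nu\mathcal{A}\right)x = x^\top M x + \nu\, x^\top\mathcal{A}\, x \;\geq\; x^\top M x \;>\; 0,
\end{equation}
because $\nu x^\top\mathcal{A}x\geq 0$ and $x^\top M x>0$ by positive definiteness of $M$. Therefore $M+\nu\mathcal{A}$ is symmetric positive definite, which proves that the viscous system \eref{eq:Ad2} is positive definite for $p_\gamma=0$ and $\nu>0$.

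I do not anticipate a genuine obstacle here; the only point that deserves a sentence of justification is the positive definiteness of the mass blocks $\Mpsi_i$, i.e. the linear independence of the chosen nodal space-time basis on $\TT_i^\st$, together with making explicit that the statement relies on $\nu>0$ (the case $\nu=0$ only recovers the pressure matrix and its semi-definiteness). Everything else is an immediate consequence of Corollaries~\ref{cor2} and \ref{cor3}.
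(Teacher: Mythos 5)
Your proposal is correct and follows essentially the same route as the paper: both write the system matrix as $M+\nu\mathcal{A}$ with $M$ block-diagonal (blocks $\Mpsi_i$) and $\mathcal{A}$ the pressure matrix, invoke Corollaries \ref{cor2}--\ref{cor3}, and conclude from $x^\top M x>0$ and $\nu x^\top \mathcal{A} x \geq 0$. Your only addition is the explicit Gram-matrix justification that the mass blocks are positive definite (and the explicit remark that $\nu>0$ is needed), which the paper leaves implicit.
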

\begin{proof}
	As used in Corollary \ref{cor3}, we can write the system such as $M+\nu \mathcal{A}$ and we know, thanks to Corollary \ref{cor2}, that $\mathcal{A}$ is in general positive semi-definite. 
	A simple computation leads to 
	\begin{eqnarray}
	x(M+\nu \mathcal{A})x^\top=x M x^\top+ \nu x \mathcal{A} x^\top>0
	\label{eq:vis8}
	\end{eqnarray}
	since $\nu x \mathcal{A} x^\top \geq 0$ and $x M x^\top>0$ we have that the complete system is also positive definite.
\end{proof}

In the general case of $p_\gamma>0$ it is not true that we recover the pressure system, since $\overline{\Mpsi}_\cdot \neq \Mpsi_\cdot$. In this case, we can observe how the non symmetric 
contribution affects only $\overline{\Mpsi}_i$. This allows us to write the previous system as $T+\nu H$ where $T$ is a block diagonal non symmetric matrix and $H$ is symmetric and positive
semi-definite. 

\subsection{Extension to curved elements}
The method described in the previous sections can readily be generalized by introducing also curved elements inside the computational domain following an iso-parametric approach. 
This generalization will affect only the pre-processing step. The extension is quite similar to the one introduced in \cite{2STINS,2DSIUSW} for the two dimensional case, but there 
are some differences due to the three dimensionality of the problem. 

\begin{figure}[ht]
    \begin{center}
    \includegraphics[width=0.4\textwidth]{./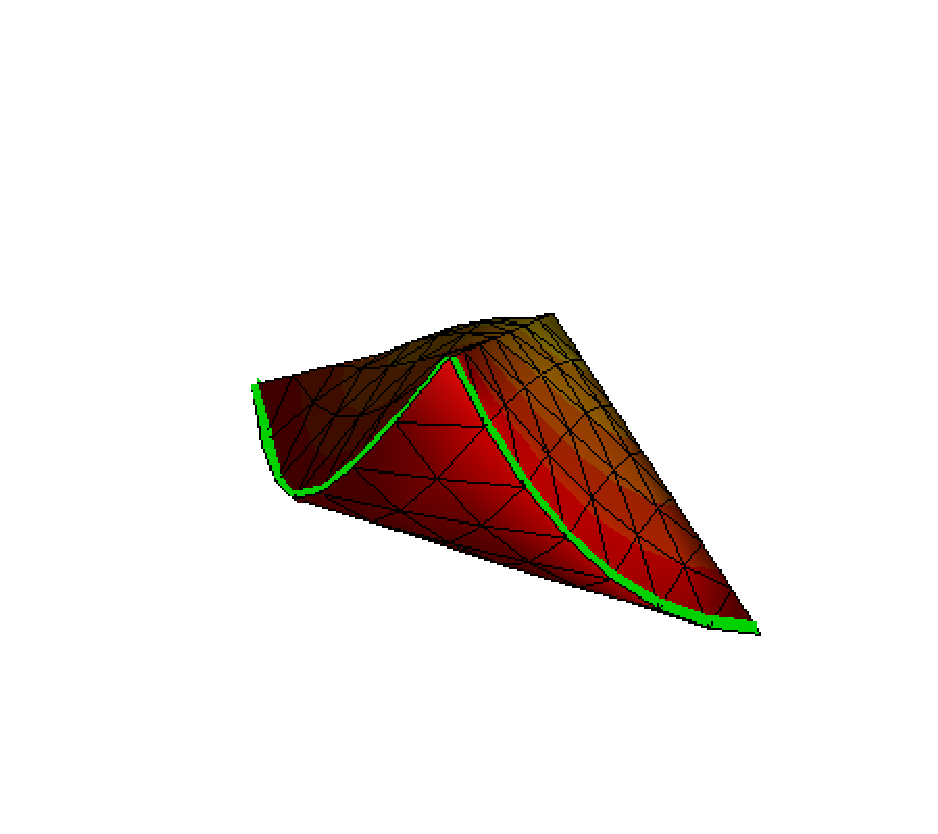}
		\includegraphics[width=0.3\textwidth]{./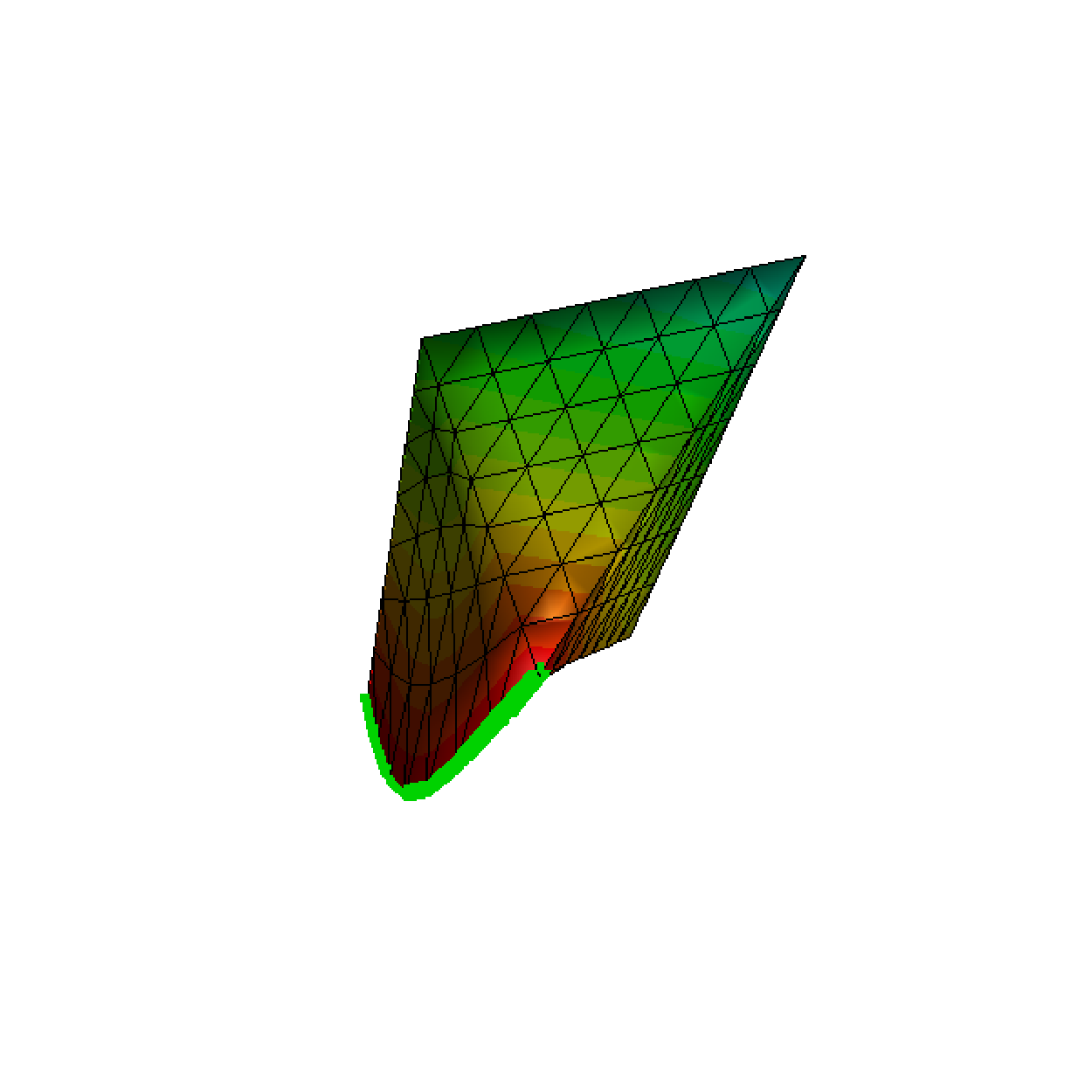}
    \caption{Isoparametric dual element example. On the left, a dual element with a 2D face on the curved boundary; on the right, an internal dual element, but with a 1D edge on the 
		curved boundary.}
    \label{fig.ISO.1}
		\end{center}
\end{figure}  

First of all, in the two dimensional case one could eventually consider as curved only the primary elements that touch a curved boundary, as well as the associated dual elements 
such that $j \in \B(\Omega)$. In the $3D$ case we have to curve also those internal elements which touch the boundary with an edge, see for example Fig.\ref{fig.ISO.1}. Each 
tetrahedral main element is then characterized by $N_\phi$ nodes $\{(X,Y,Z)_k^\cdot\}_{k=1,N_\phi}$, while the dual hexahedral elements are split into a left and a right tetrahedron, 
i.e. $\QQ_j^{iso}=\TT_{\ell(j),j}^{iso} \cup \TT_{r(j),j}^{iso}$ and the points that lie on $\Gamma_j^{iso}$ are physically joined. In this way we have a full characterization 
of the left and the right sub-tetrahedron of the dual hexahedral element, needed to compute properly the integral contributions in the algorithm. 

In order to compute the position of the grid points in the presence of curved boundaries, we start from an initial tetrahedrization with piecewise linear faces, as given by a standard
mesh generator. Then, we produce a fine sub-tetrahedrization that involves all the degrees of freedom inside the domain $\Omega$ and we solve a simple Laplace equation for the displacement 
using a classical $P1$ continuous finite element method, imposing the projection onto the curved physical boundaries as boundary conditions for the Laplace equation. This procedure 
produces a regular distribution of nodes inside the computational domain in the presence of curved boundaries. 
As shown in \cite{2STINS}, the possibility to curve the grid is crucial when we try to represent complex domains with a very coarse grid. In any case, we emphasize that this generalization does not affect the computational cost during run-time, since it affects only the construction of the main matrices that can be done in a preprocessing step.

\section{Numerical test problems}
\label{sec_numres}
\subsection{Three-dimensional lid driven cavity}
In this section we present some results regarding the three-dimensional lid-driven cavity problem. 
In the literature there are a lot of well known results and reference solutions for the two-dimensional as well as for the fully three-dimensional case, see 
\cite{Ghia1982,Erturk2005,Albensoeder2005,Hwar1987,Aidun1991}.
We take a classical cubic cavity $\Omega=[-0.5, 0.5]^3$ and we discretize it with a very coarse tetrahedral mesh with characteristic mesh size $h=0.2$. We set as initial conditions $p=1$; $u=v=w=0$. As boundary condition we impose velocity $(u,v,w)=(1,0,0)$ at $y=0.5$ while no-slip boundary conditions are used on the other boundaries. 
Since we are interested in steady state solutions, we take for the current test $p=4$, $p_\gamma=0$, $\theta=1$, and several different values for the kinematic viscosity in order to 
obtain different Reynolds numbers. 

\begin{figure}[ht]
    \begin{center}
    \includegraphics[width=0.48\textwidth]{./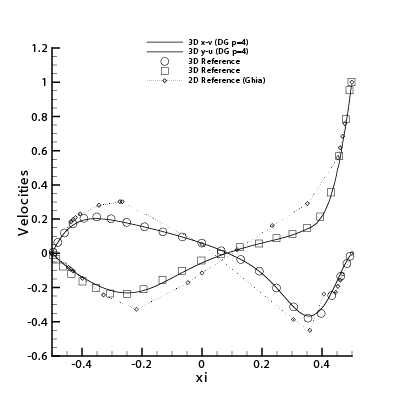}
    \includegraphics[width=0.48\textwidth]{./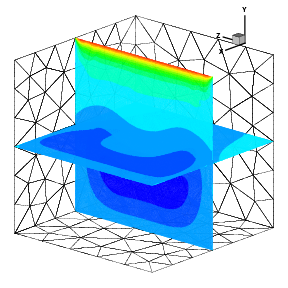} \\
    \includegraphics[width=0.328\textwidth]{./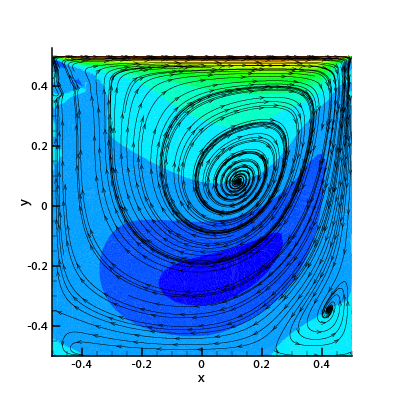}
    \includegraphics[width=0.328\textwidth]{./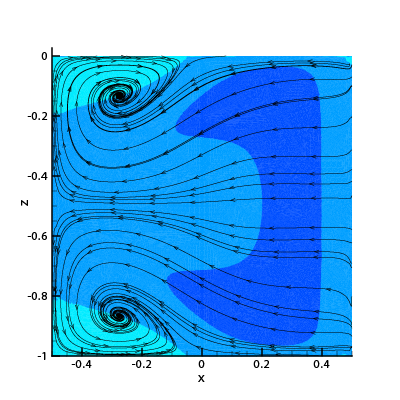}
		\includegraphics[width=0.328\textwidth]{./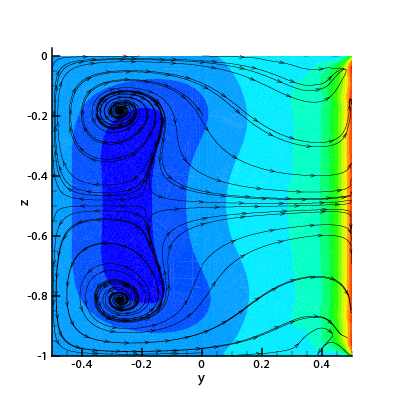}
    \caption{3D lid-driven cavity. From top left to bottom right: Comparison between our numerical results, the one obtained by Albensoeder et al in \cite{Albensoeder2005}, and the two dimensional data from Ghia et al \cite{Ghia1982} at $Re=400$; three-dimensional plot of the two secondary slices and grid spacing; streamlines and magnitude of $u$ on slices $x-y$, $x-z$ and $y-z$.}
    \label{fig.C.1}
		\end{center}
\end{figure}

\begin{figure}[ht]
    \begin{center}
    \includegraphics[width=0.48\textwidth]{./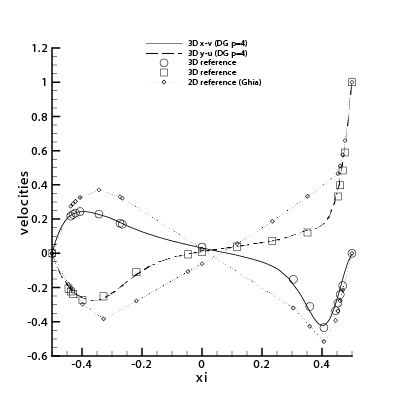}
    \includegraphics[width=0.48\textwidth]{./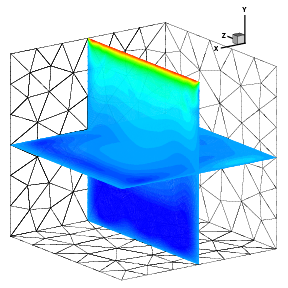} \\
    \includegraphics[width=0.328\textwidth]{./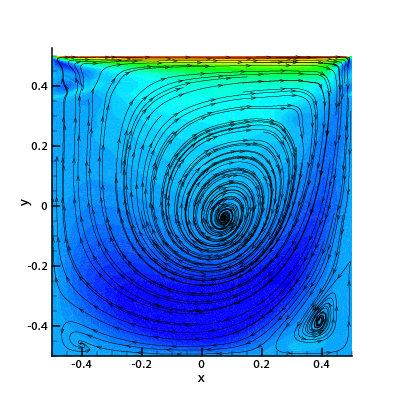}
    \includegraphics[width=0.328\textwidth]{./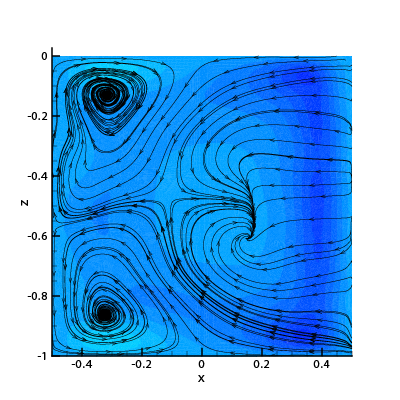}
		\includegraphics[width=0.328\textwidth]{./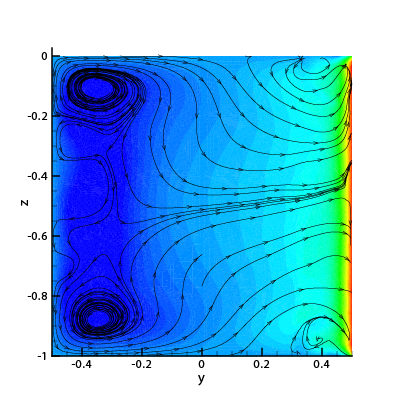}
    \caption{3D lid-driven cavity. From top left to bottom right: Comparison between our numerical results, the one obtained by Albensoeder et al in \cite{Albensoeder2005}, and the two dimensional data from Ghia et al \cite{Ghia1982} at $Re=1000$; three-dimensional plot of the two secondary slices and grid spacing; streamlines and magnitude of $u$ on slices $x-y$, $x-z$ and $y-z$.}
    \label{fig.C.2}
		\end{center}
\end{figure}

In Figure \ref{fig.C.1} the results are shown at a final time of $t_{end}=30$ for $Re=400$. In Figure \ref{fig.C.2} the same plots are given for $t_{end}=40$ and $Re=1000$. In the top left panel
of each plot we report our numerical results and compare them against the reference solution obtained in \cite{Albensoeder2005} for the fully three-dimensional case and the data given 
by Ghia et al. \cite{Ghia1982} for the two dimensional cavity at the same Reynolds number. We note a very good agreement with the 3D reference solution, despite the use of an extremely coarse mesh. 
The data show that the presence of the third space dimension significantly modifies the velocity profiles compared to the 2D case. 
Furthermore, several Taylor-G{\"o}rtler like vortices appear in the secondary planes in a very similar way as observed in other numerical and experimental investigations of 
this problem, see e.g. \cite{Hwar1987,Aidun1991}.

\subsection{Convergence test}
In this test we will investigate the Arnold-Beltrami-Childress flow that was originally introduced by Arnold in \cite{Arnold65} and Childress in \cite{Childress70} as an interesting class of Beltrami flows and successively studied in a series of papers, see e.g. \cite{Dombre86,Podvigina99,Podvigina2006,Ershkov15}. In particular we consider:
\begin{eqnarray}
	u(x,y,z,t) &=& \left[\sin(z)+ \cos(y) \right] e^{-\nu t}, \nonumber \\
	v(x,y,z,t) &=& \left[\sin(x)+ \cos(z) \right] e^{-\nu t}, \nonumber \\
	w(x,y,z,t) &=& \left[\sin(y)+ \cos(x) \right] e^{-\nu t}, \nonumber \\
	p(x,y,z,t) &=& -\left[\cos(x)\sin(y)+ \sin(x)\cos(z)+ \sin(z)\cos(y) \right] e^{-2\nu t}+c
\label{eq:CT_1}
\end{eqnarray}
where $c \in \R$. One can check that this is an exact solution for the complete three dimensional incompressible Navier-Stokes equations in a periodic domain, so this smooth configuration is suitable 
for numerical convergence tests. In particular if $\nu=0$ we can check the accuracy of the spatial part of the algorithm, i.e. $p_\gamma=0$, since the solution is a steady one. We take as computational  domain $\Omega=[-\pi,\pi]^3$ and we extend it using periodic boundary conditions everywhere. We use increasing values of the polynomial degree $p$ and use a sequence of successively refined meshes,  starting from a regular initial mesh. Simulations are performed up to $t_{end}=0.1$. The time step $\Delta t$ is chosen according to the CFL time restriction for explicit DG schemes based on the 
magnitude of the flow velocity. Since we have periodic boundary conditions everywhere, we have a set of solutions for the pressure given by \eref{eq:CT_1} up to a constant. In order to verify 
that also the pressure field is correct, we choose $c$ in \eref{eq:CT_1} a posteriori according to the mean value of the resulting numerical pressure. 

\begin{table}[!htb]
 \caption{Numerical convergence results for the steady 3D ABC flow ($\nu=0$).}
 \begin{center}
 \begin{tabular}{ccccccc}
     \hline
 $p$ & $p_\gamma$ & $\Ni$ & $\epsilon(p)$& $\epsilon(\mathbf{v})$ & $\sigma(p)$& $\sigma(\mathbf{v})$  \\
 \hline \hline
1  &0    &   7986    &    7.4349E-01  &    3.7768E-01  & - & -   \\
1  &0    &   10368    &    6.2638E-01  &    3.1662E-01  & 2.0  & 2.0  \\
1  &0    &   13182    &    5.3318E-01  &    2.7046E-01  & 2.0  & 2.0 \\
1  &0    &   16464    &    4.6155E-01  &    2.3309E-01  & 2.0  & 2.0  \\
 \hline \hline
2  &0    &   7986    &    8.6472E-02  &    5.0920E-02  & 3.0  & 2.4    \\
2  &0    &   10368    &    6.7178E-02  &    4.1417E-02  & 2.9  & 2.4  \\
2  &0    &   13182    &    5.2651E-02  &    3.4271E-02  & 3.0  & 2.4 \\
2  &0    &   16464    &    4.2520E-02  &    2.8499E-02  & 2.9  & 2.5  \\
 \hline \hline
3  &0    &   7986    &    6.6133E-03  &    3.5899E-03  & 3.9  & 3.4  \\
3  &0    &   10368    &    4.7069E-03  &    2.6619E-03  & 3.9  & 3.4 \\
3  &0    &   13182    &    3.4219E-03  &    2.0294E-03  & 4.0  & 3.4   \\
3  &0    &   16464    &    2.5604E-03  &    1.5727E-03  & 3.9  & 3.4  \\
 \hline \hline
4  &0    &   6000    &    8.4806E-04  &    6.7156E-04  & 4.9  & 4.1   \\
4  &0    &   7986    &    5.3156E-04  &    4.5361E-04  & 4.9  & 4.1  \\
4  &0    &   10368    &    3.4667E-04  &    3.1585E-04  & 4.9  & 4.2   \\
4  &0    &   13182    &    2.3307E-04  &    2.2733E-04  & 5.0  & 4.1   \\
 \hline \hline
5  &0    &   4374    &    1.5777E-04  &    1.6300E-04  & 5.9  & 5.1   \\
5  &0    &   6000    &    8.4744E-05  &    9.4463E-05  & 5.9  & 5.2   \\
5  &0    &   7986    &    4.8228E-05  &    5.7433E-05  & 5.9  & 5.2  \\
5  &0    &   10368    &   2.8868E-05  &    3.6318E-05  & 5.9  & 5.2  \\
     \hline
     \end{tabular}
 \end{center}
 \label{tab:CT_1}
 \end{table}

The resulting vorticity, pressure and streamlines are plotted in Figure \ref{fig.CT.1}, while in Table \ref{tab:CT_1} the resulting $L_2$ error norms are reported for the steady case 
$\nu=0$. We observe how the optimal order of convergence is obtained for this steady problem for the pressure, while a suboptimal order of convergence can be observed for the velocity field.  

In the second test case we turn on the viscosity in order to make the problem unsteady. For this kind of problem we use the space-time DG implementation of the algorithm and we set the 
number of Picard iterations to $N_{pic}=p_\gamma+1$. Unfortunately, as soon as we use a high order polynomial in time, the resulting main system looses the symmetry property and hence 
we have to use a slower linear solver, such as the GMRES method. Since the viscosity contribution is discretized implicitly, we can take very large values for the kinematic viscosity 
and maintain the same CFL time restriction for the simulation. The chosen viscosity for this test is $\nu=1$ and we test the method for $p=p_\gamma=1\ldots 4$ on a sequence of 
successively refined grids. The resulting convergence rates, as well as the $L_2$ error norms, are shown in Table \ref{tab:CT_2}. In this case an order of $p+\frac{1}{2}$ is achieved 
for the pressure, while order $p+1$ can be observed for the velocity. 

 \begin{table}[!htb]
 \caption{Numerical convergence results for the unsteady ABC flow ($\nu=1$).}
 \begin{center}
 \begin{tabular}{ccccccc}
     \hline
 $p$ & $p_\gamma$ & $\Ni$ & $\epsilon(p)$& $\epsilon(\mathbf{v})$ & $\sigma(p)$& $\sigma(\mathbf{v})$  \\
 \hline \hline
1  &1    &   10368    &    1.1713E+00  &    2.4695E-01  & 1.6  & 2.0  \\
1  &1    &   13182    &    1.0388E+00  &    2.1017E-01  & 1.5  & 2.0    \\
1  &1    &   16464     &   9.2718E-01  &    1.8075E-01  & 1.5  & 2.0  \\
1  &1    &   20250    &    8.3860E-01  &    1.5730E-01  & 1.5  & 2.0  \\
		\hline \hline
2  &2    &   10368    &    1.7339E-01  &    1.4475E-02  & 2.8  & 3.1  \\
2  &2    &   13182    &    1.4060E-01  &    1.1291E-02  & 2.6  & 3.1   \\
2  &2    &   16464     &    1.1470E-01  &    8.9676E-03  & 2.8  & 3.1  \\
2  &2    &   20250    &    9.5780E-02  &    7.2516E-03 & 2.6  & 3.1  \\
 \hline \hline
3  &3    &   6000    &    1.6219E-02  &    1.5469E-03  & 3.8  & 4.1    \\
3  &3    &   7986    &    1.1454E-02  &    1.0494E-03  & 3.7  & 4.1   \\
3  &3    &   10368    &    8.2191E-03  &    7.3591E-04  & 3.8  & 4.1    \\
3  &3    &   13182    &    6.1399E-03  &    5.3142E-04  & 3.6  & 4.1    \\
 \hline \hline
4  &4    &    750    &    4.5578E-02  &    3.2574E-03  & 4.7  & 4.8  \\
4  &4    &    1296    &    1.9664E-02  &    1.2957E-03  & 4.6  & 5.1 \\
4  &4    &    2058    &    9.3757E-03  &    5.9049E-04  & 4.8  & 5.1   \\
4  &4    &    3072    &    5.0553E-03  &    2.9738E-04  & 4.6  & 5.1  \\
     \hline
     \end{tabular}
 \end{center}
 \label{tab:CT_2}
 \end{table}

\begin{figure}[ht]
    \begin{center}
    \includegraphics[width=0.45\textwidth]{./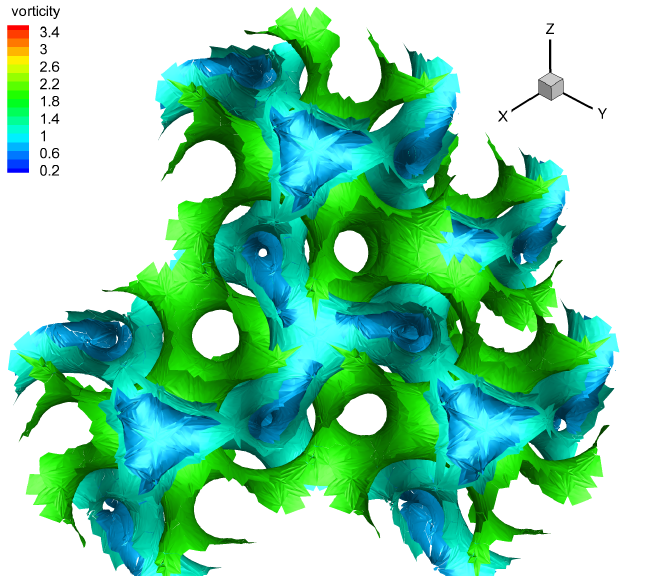}
		\includegraphics[width=0.45\textwidth]{./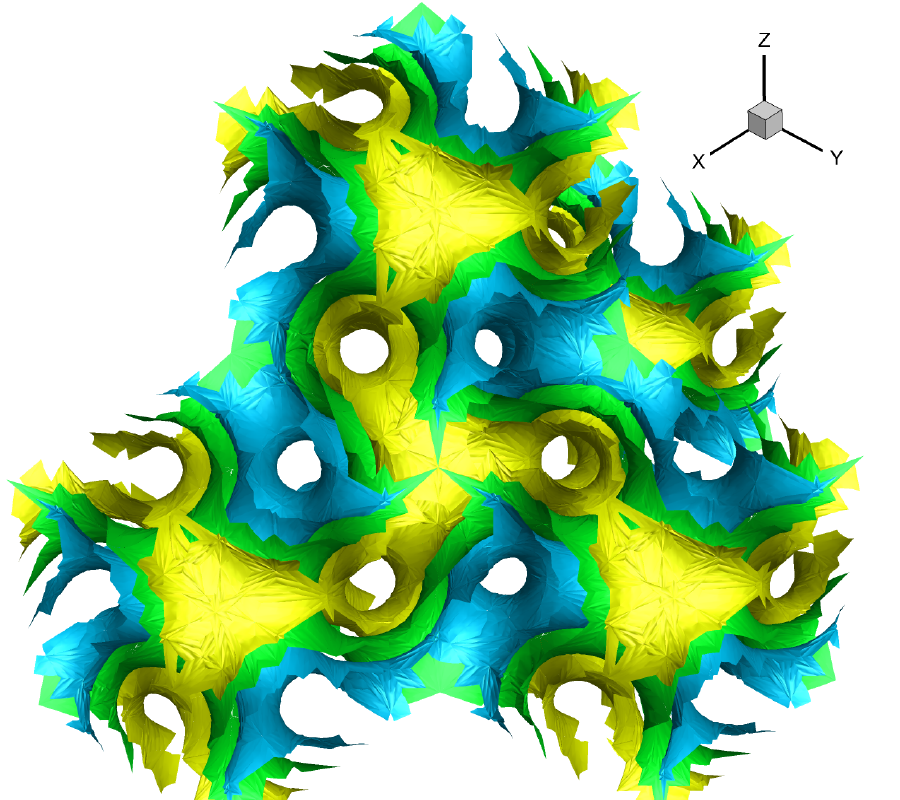}
		\includegraphics[width=0.45\textwidth]{./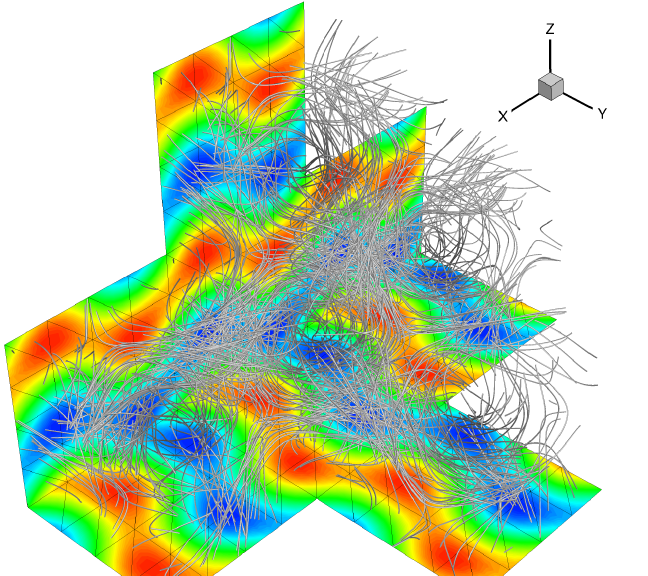}
    \caption{From top left to bottom: Vorticity isosurfaces $[0.8,1.2,2.0]$; pressure isosurfaces $p=[-0.8,0.0,0.8]$ and streamlines in order to show the three-dimensionality of the 
		ABC flow problem.}
    \label{fig.CT.1}
		\end{center}
\end{figure}

\subsection{Taylor-Green Vortex}
\label{sec_3dtgv}
In this section we investigate another typical benchmark problem, namely the classical 3D Taylor-Green vortex. In this test case a very simple initial analytical solution degenerates quickly to a  turbulent flow with very complex flow structures. We take the initial condition as given in \cite{Oriol2015}: 
\begin{eqnarray}
	u(x,y,z,t) &=& \sin(x)\cos(y)\cos(z), \nonumber \\
	v(x,y,z,t) &=& -\cos(x)\sin(y)\cos(z), \nonumber \\
	w(x,y,z,t) &=& 0, \nonumber \\
	p(x,y,z,t) &=& p_0+\frac{1}{16}\left(\cos(2x)+\cos(2y) \right)\left(\cos(2z)+2) \right),
\label{eq:TGV_1}
\end{eqnarray}
in $\Omega=[\pi,\pi]^3$ and periodic boundary conditions everywhere. As numerical parameters we take $(p,p_\gamma)=(4,0)$; $N_i=494592$ tetrahedral elements; $\theta=0.51$; $\Delta t$ according to 
the CFL time restriction; $t_{end}=10$; 
and several values of $\nu$ so that the Reynolds numbers under consideration are $Re=400$, $Re=800$ and $Re=1600$, respectively.

\begin{figure}[ht]
    \begin{center}
    \includegraphics[width=0.3\textwidth]{./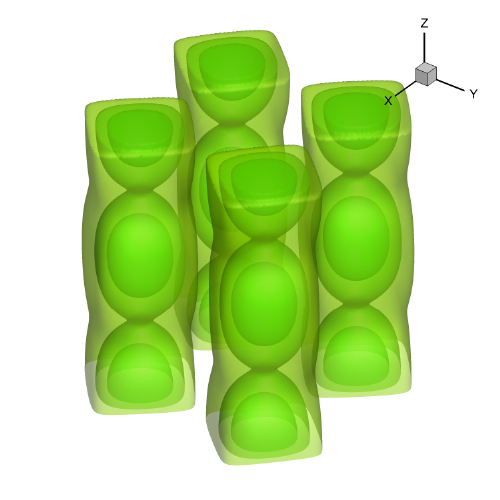}
		\includegraphics[width=0.3\textwidth]{./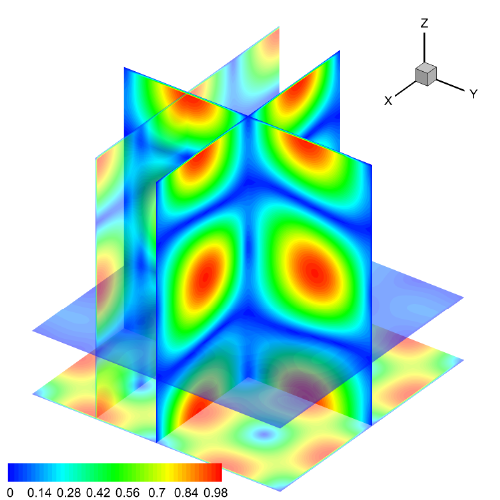}
		\includegraphics[width=0.3\textwidth]{./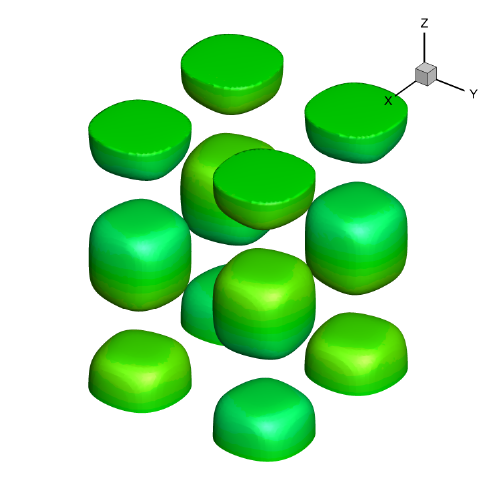} \\
		\includegraphics[width=0.3\textwidth]{./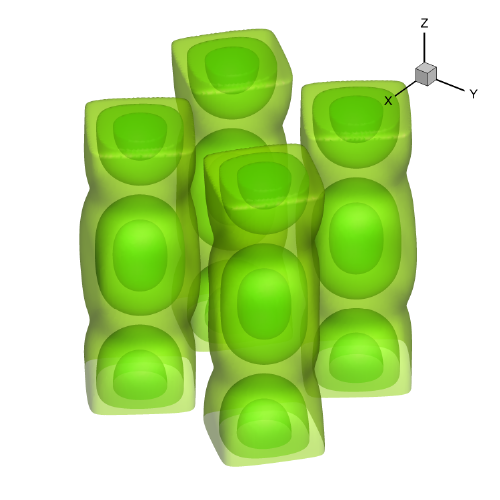}
		\includegraphics[width=0.3\textwidth]{./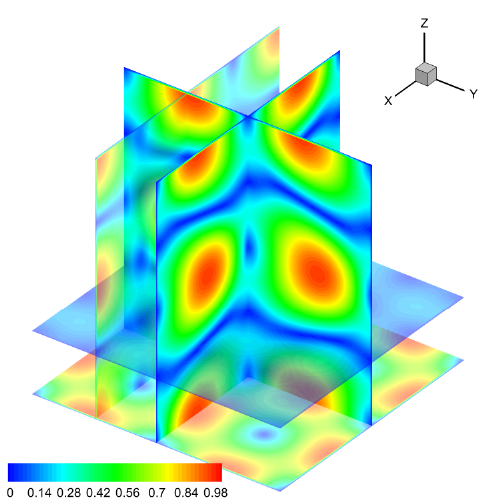}
		\includegraphics[width=0.3\textwidth]{./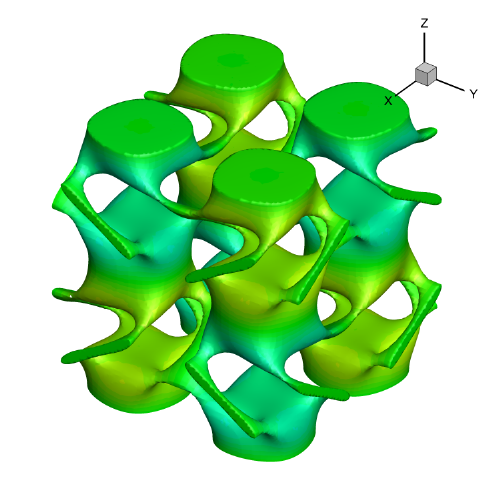} 
    \caption{3D Taylor-Green vortex at $Re=800$. From left to right: Pressure isosurfaces, velocity magnitude and vorticity isosurfaces at times $t=0.5$ (top) and $t=1.0$ (bottom).}
    \label{fig.TGV.1}
		\end{center}
\end{figure}

\begin{figure}[ht]
    \begin{center}
		\includegraphics[width=0.3\textwidth]{./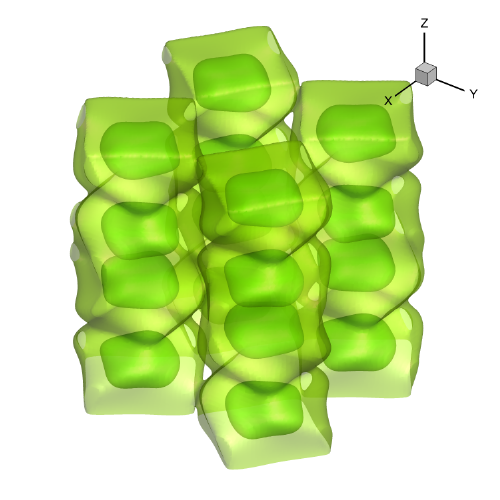}
		\includegraphics[width=0.3\textwidth]{./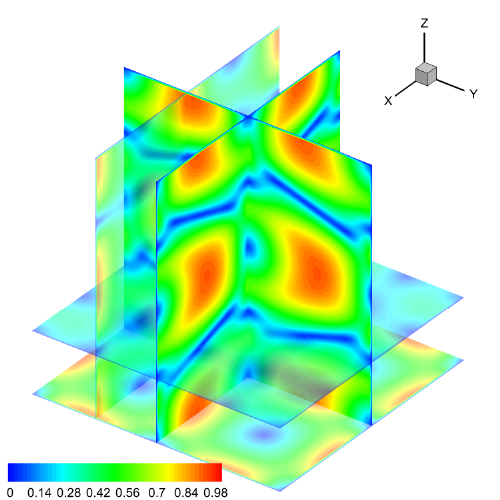}
		\includegraphics[width=0.3\textwidth]{./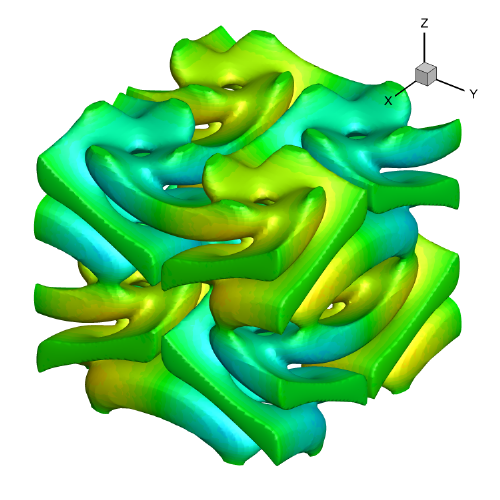} \\
		\includegraphics[width=0.3\textwidth]{./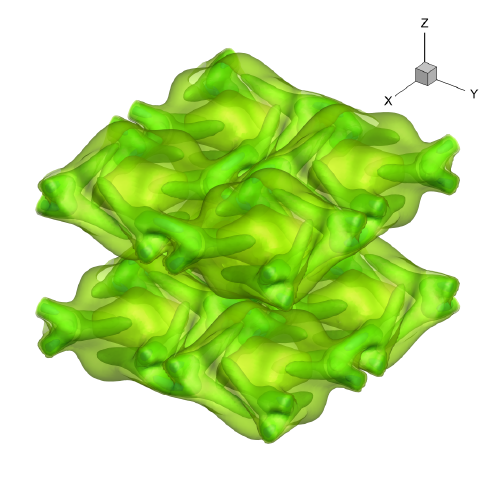}
		\includegraphics[width=0.3\textwidth]{./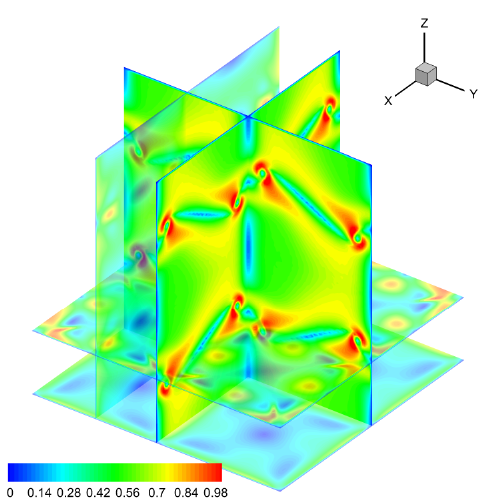}
		\includegraphics[width=0.3\textwidth]{./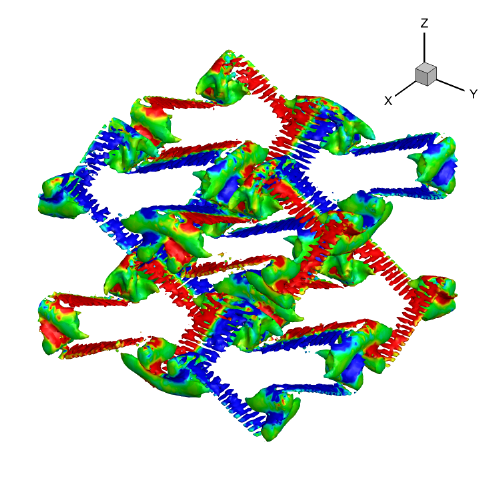} \\
		\includegraphics[width=0.3\textwidth]{./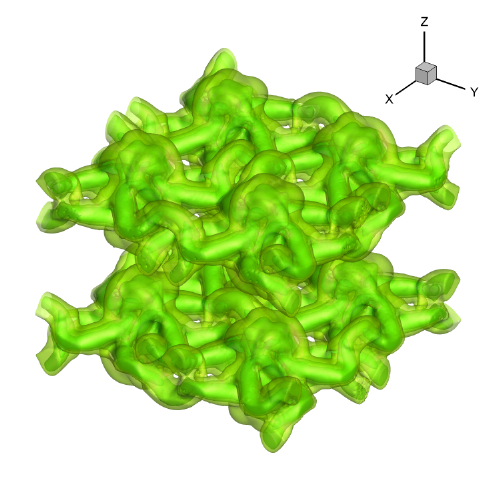}
		\includegraphics[width=0.3\textwidth]{./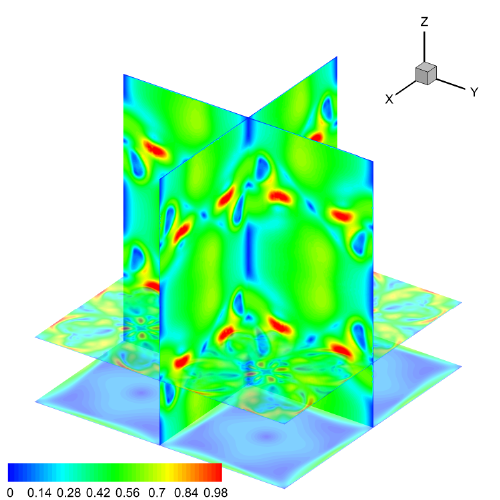}
		\includegraphics[width=0.3\textwidth]{./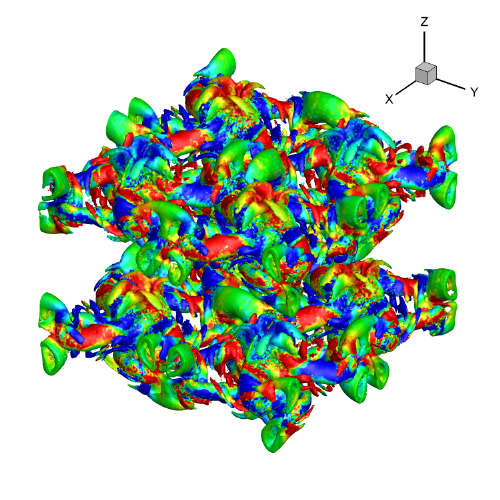}
    \caption{3D Taylor-Green vortex at $Re=800$. From left to right: Pressure isosurfaces, velocity magnitude and vorticity isosurfaces at times $t=2.1$ (top), $t=4.8$ (center) and $t=9.0$ (bottom).}
    \label{fig.TGV.1a}
		\end{center}
\end{figure}

\begin{figure}[ht]
    \begin{center}
		\includegraphics[width=0.6\textwidth]{./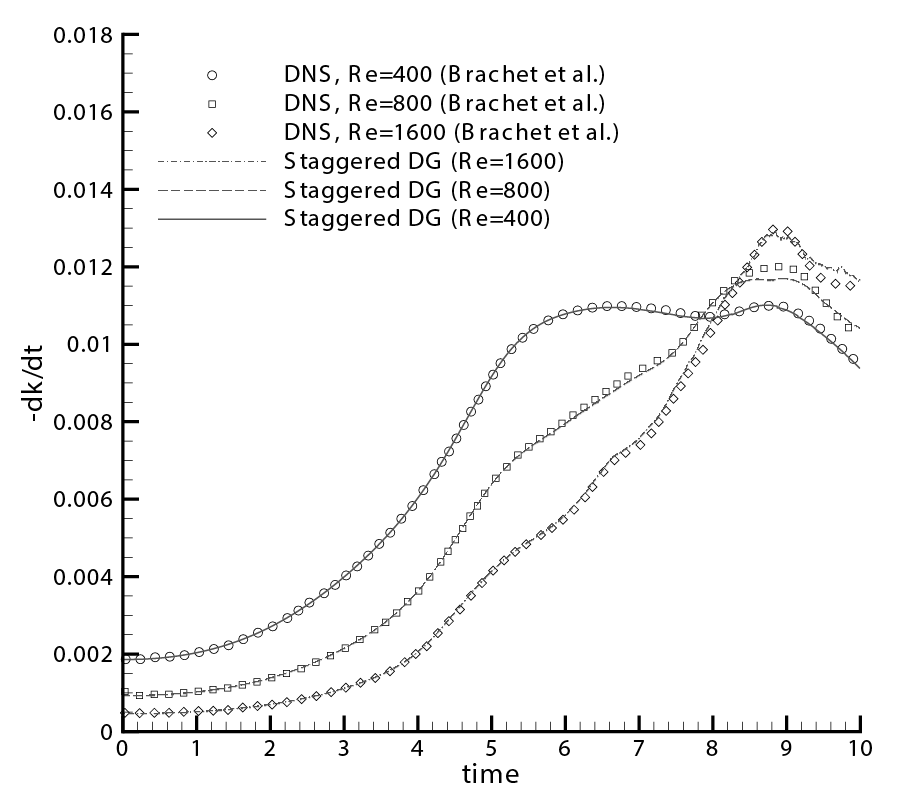}
    \caption{Time evolution of the kinetic energy dissipation rate $-dk/dt$ for the 3D Taylor-Green vortex, compared with available DNS data of Brachet et al \cite{Brachet1983} 
		for $Re=400,800$ and $Re=1600$.} 
    \label{fig.TGV.2}
		\end{center}
\end{figure}

A plot of the time evolution of the pressure field, the velocity magnitude and the vorticity pattern is shown in Figure \ref{fig.TGV.1} for several times, as well as time series of the total kinematic dissipation rates compared with available DNS data given by Brachet et al in \cite{Brachet1983} in Figure \ref{fig.TGV.2}.
A good agreement between reference data and our numerical results can be observed. In Figure \ref{fig.TGV.1} the vorticity pattern shows a really complex behavior that appears after a certain 
time. 

%

In this particular test it is very important to resolve the small scale structures that, close to $t=9$, constitute the main contribution to the total kinetic energy dissipation. 
The mean number of iterations needed to solve the linear system for the pressure at $Re=1600$ and a tolerance of $tol=10^{-8}$ is $I_{mean}=290.7$. In general we observe a number of 
iterations of the linear solver in the range $I \in [93,2516]$ for this test case, without the use of any preconditioner. 

\subsection{Womersley flow}
In this section the proposed algorithm is verified against the exact solution for an oscillating flow in a rigid tube of length $L$ with circular cross section of diameter $D$. 
The unsteady flow is driven by a sinusoidal pressure gradient on the inlet and outlet boundaries
\begin{equation}
	\frac{p_{out}(t)-p_{inlet}(t)}{L}=\frac{\tilde{p}}{\rho}e^{i \omega t},
\label{eq:W_1}
\end{equation}
where $\tilde{p}$ is the amplitude of the pressure gradient; $\rho$ is the fluid density; $\omega$ is the frequency of the oscillation; $i$ indicates the imaginary unit; $p_{inlet}$ and $p_{out}$ are the inlet and outlet pressures, respectively. The analytical solution was derived by Womersley in \cite{Womersley1995}.
According to \cite{Womersley1995,FambriDumbserCasulli} no convective contribution is considered.
By imposing Eq. \eref{eq:W_1} at the tube ends, the resulting unsteady velocity field is uniform in the axial direction and is given by
\begin{equation}
	u_{e}(\xx,t)=\frac{\tilde{p}}{\rho}\frac{1}{i \omega}\left[ 1- \frac{J_0\left(\alpha \zeta i^{\frac{3}{2}}\right)}{J_0\left(\alpha i^{\frac{3}{2}}\right)} \right]e^{i \omega t}\,\, ; \,\, 
	v_{e}(\xx,t)=w_{e}(\xx,t)=0,
\label{eq:W_2}
\end{equation}
where $\zeta=2r/D$ with $r=\sqrt{y^2+z^2}$ is the dimensionless radial coordinate; $D$ is the diameter of the tube; $\alpha=\frac{D}{2}\sqrt{\frac{\omega}{\nu}}$ is a constant; and $J_0$ is the zero-th order Bessel function of the first kind. For the present test we take $\Omega$ as a cylinder (aligned with the $x$-axis) of length $L=4$ and diameter $D=2$; $\tilde{p}=1000$; $\rho=1000$; $\omega=2 \pi$; and $\nu=0.04$. The computational domain $\Omega$ is covered with a total number of only $\Ni=1185$ tetrahedra and the time step size is chosen as $\Delta t=0.3$, which is $30\%$ of one 
oscillation period. For this test we take $(p, p_\gamma)=(4,3)$ in order to produce a good solution also with the chosen time step $\Delta t$, which can be considered as very large for this problem. 

\begin{figure}[ht]
    \begin{center}
    \includegraphics[width=0.3\textwidth]{./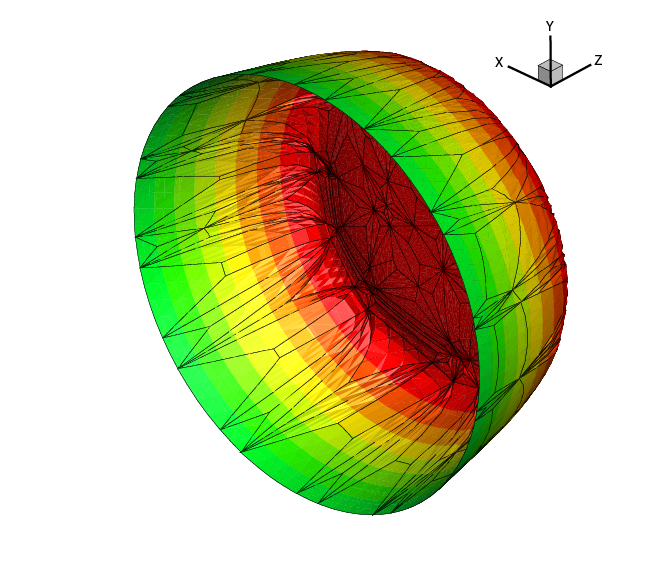}
		\includegraphics[width=0.3\textwidth]{./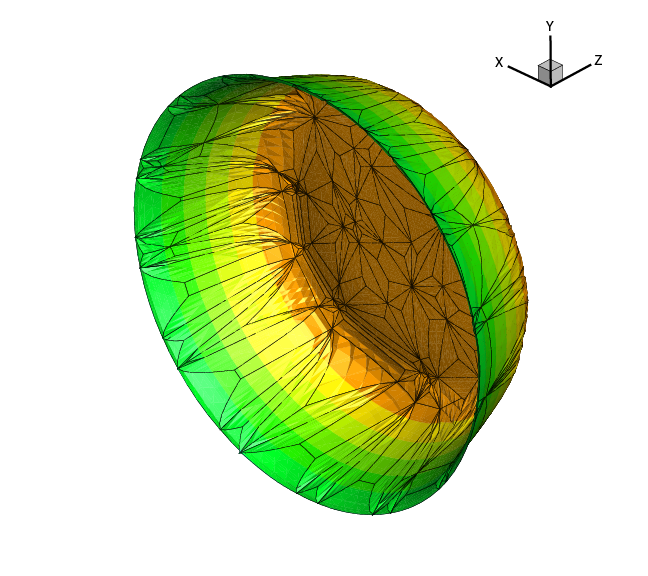}
		\includegraphics[width=0.3\textwidth]{./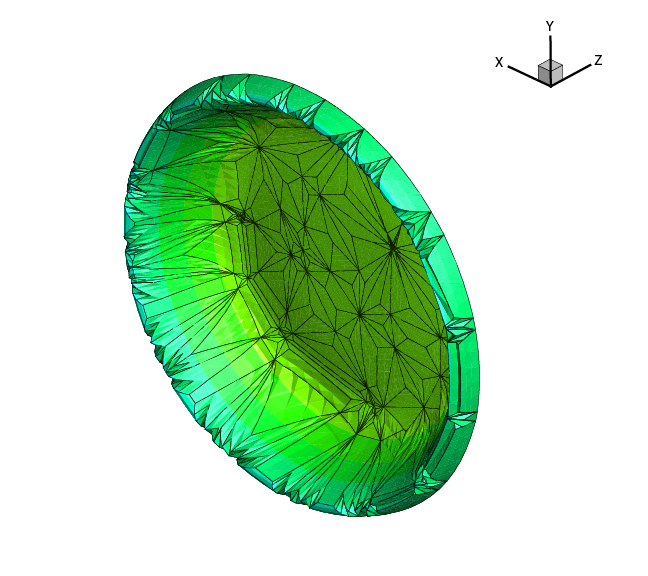}
		\includegraphics[width=0.3\textwidth]{./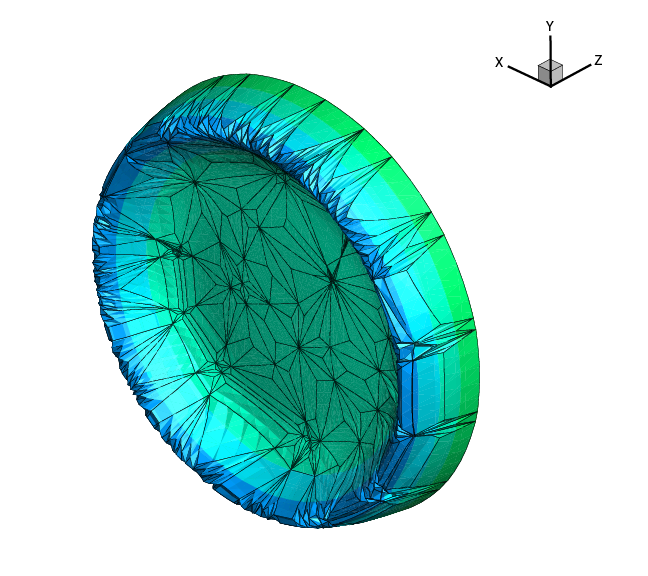}
		\includegraphics[width=0.3\textwidth]{./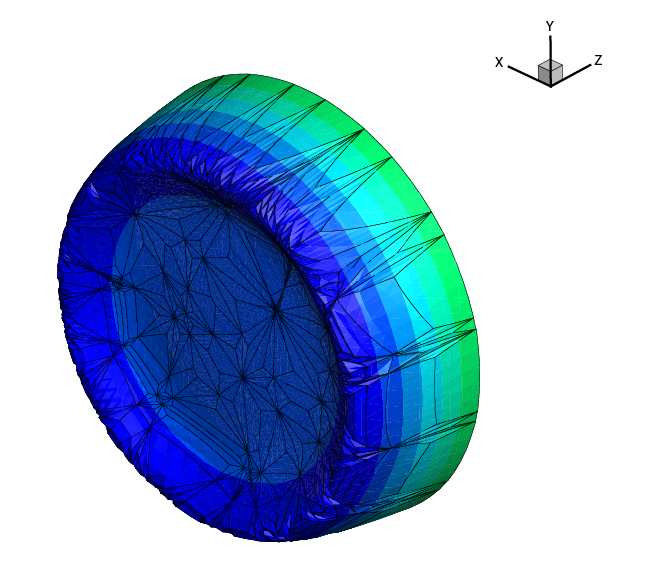}
    \caption{3D Womersley flow. Plot of $u$ in the middle of the tube in \textit{one single} time control volume $T=[0.3,0.6]$. From top left to bottom right we plot the discrete solution 
		at intermediate time levels $t=[0.3,0.375,0.45,0.525,0.6]$.}
    \label{fig.Wom.1}
		\end{center}
\end{figure}

\begin{figure}[ht]
    \begin{center}
    \includegraphics[width=0.4\textwidth]{./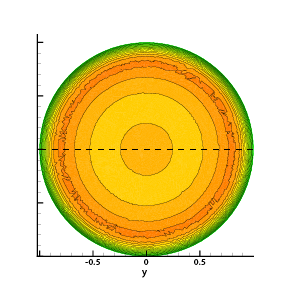}
		\includegraphics[width=0.4\textwidth]{./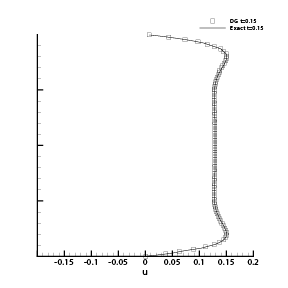} \\
		\includegraphics[width=0.4\textwidth]{./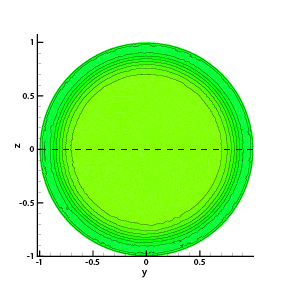}
		\includegraphics[width=0.4\textwidth]{./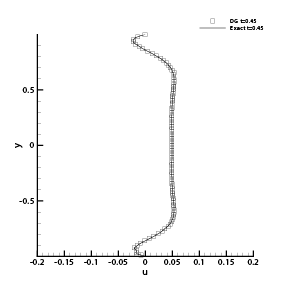} \\
		\includegraphics[width=0.4\textwidth]{./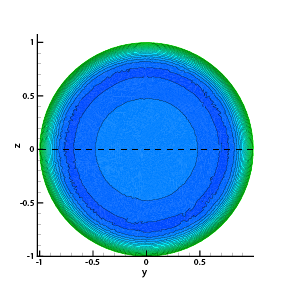}
		\includegraphics[width=0.4\textwidth]{./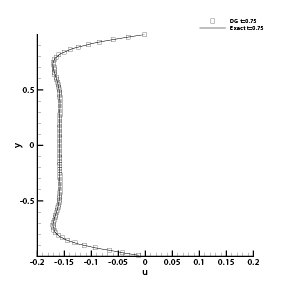}

    \caption{3D Womersley flow. Axial velocity contours in the plane $x=2$ (left column) and comparison of the velocity against the exact solution at $x=2$ and $z=0$ (right column) at times, from top to bottom, $t=[0.15, 0.45, 0.75]$.}
    \label{fig.Wom.2}
		\end{center}
\end{figure}

\begin{figure}[ht]
    \begin{center}
		\includegraphics[width=0.45\textwidth]{./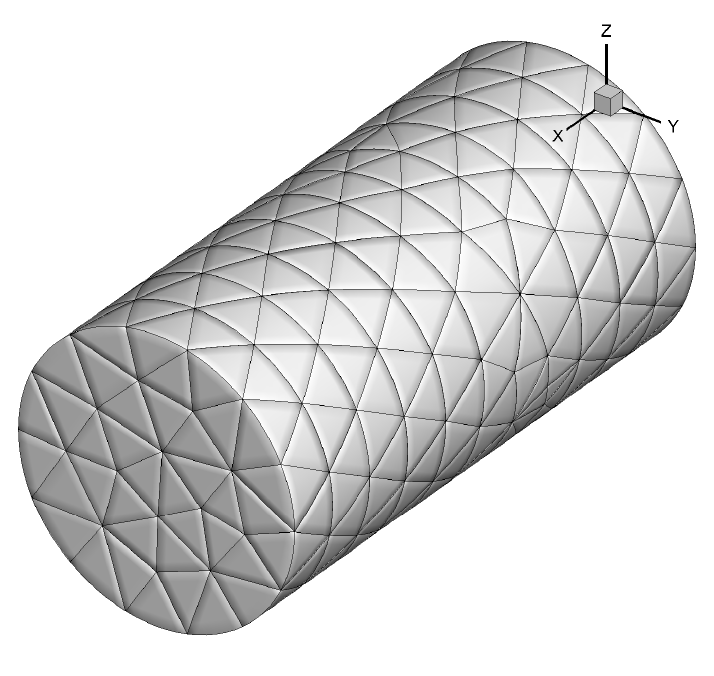}
    \includegraphics[width=0.45\textwidth]{./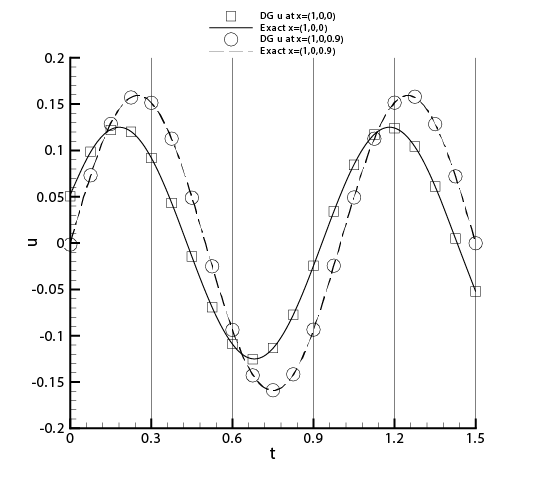}
    \caption{3D Womersley flow. Three dimensional view of the isoparametric grid used in this test case (left); 
		Time series of u in the plane $x=1$, $(y,z)=(0,0)$ and $(y,z)=(0,0.9)$ (right). 
		The vertical lines represent the very large time step size of $\Delta t=0.3$ used in this simulation.}
    \label{fig.Wom.3}
		\end{center}
\end{figure}
Due to the curved geometry of the problem we use a fully isoparametric approach to fit the cylinder. A plot of the isoparametric grid that has been used here is reported in Figure \ref{fig.Wom.3} on the left.
We test our numerical solution in the cutting slice $\Gamma=\{x=2\}$ and successively on the line given by $(x,z)=(2,0) \in \Gamma$. Figure \ref{fig.Wom.1} shows the evolution of the velocity profile $u$ on $\Gamma$ solved in a single time cell $\Gamma^{st}(t,\vec{x})=\Gamma(\vec{x}) \times [0.3,0.6]$ evaluated at several intermediate times. A comparison between numerical and exact solution is reported in Figure \ref{fig.Wom.2} as well as the plot of $\Gamma$, in order to show the axial symmetry of the solution, that is not trivial to obtain for the chosen discretization (very coarse unstructured mesh and
very large time steps). 
Finally, a plot of the time series of the velocity $u$ computed in $\xx=(1,0,0)$ and $\xx=(1,0,0.9)$ is reported in Fig. \ref{fig.Wom.3} and is compared with the exact solution. 
It is clear from Figures \ref{fig.Wom.2} and \ref{fig.Wom.3} that this test with the chosen time step can reproduce good results only if we use high order polynomials also in time; 
indeed, the solution for a first order method in time would look piecewise constant within each time step.

\subsection{Blasius boundary layer}
We consider here a classical benchmark for viscous incompressible fluids. 
For the particular case of laminar stationary flow over a flat plate, a solution of Prandtl's boundary layer equations was found by Blasius in \cite{Blasius1908} and is given by the solution of 
a nonlinear third-order ODE, namely: 
\begin{equation}
\left\{
\begin{array}{l}
    f'''+ff''=0, \\
    f(0)=0, \quad f'(0)=0, \quad \lim \limits_{\xi \rightarrow \infty} f'(\xi)=1,
\end{array}
\right.
\end{equation}
where $\xi=y \sqrt{\frac{u_{\infty}}{2\nu x}}$ is the Blasius coordinate; $f'=\frac{u}{u_\infty}$; and $u_\infty$ is the far field velocity. The reference solution is computed here using a tenth-order DG ODE solver, see e.g. \cite{ADERNSE}, together with a classical shooting method.
In order to obtain the Blasius velocity profile in our simulations we consider a steady flow over a flat plate. As a result of the viscosity, a boundary layer appears along the no-slip wall. For the current test, we consider $\Omega=[-0.2, 0.8]\times [-0.2, 0.2]^2$. An initially uniform flow $u(x,y,z,0)=u_\infty=1$ , $v(x,y,z,0)=w(x,y,z,0)=0$ and $p(x,y,0)=1$ is imposed as initial condition, while an inflow boundary is imposed on the left boundary; no slip boundary condition is considered in the flat plane $\Gamma=\{ (x,y,z) | \quad x\geq 0 \,\, y=y_{\min}\}$; slip boundary conditions are imposed 
at $z=z_{\min}$ and $z=z_{\max}$ and transmissive boundary conditions are imposed at the upper face $y=y_{\max}$ .  
We consider here an extreme case of a very coarse mesh, where we cover our domain $\Omega$ with a set of only $\Ni=1522$ tetrahedra, whose characteristic length is $h=0.07$. 
The chosen polynomial degree is $(p,p_\gamma)=(4,0)$, the final simulation time is $t_{end}=10$ and the viscosity is $\nu=3\cdot 10^{-4}$. 

\begin{figure}[ht]
    \begin{center}
    \includegraphics[width=0.98\textwidth]{./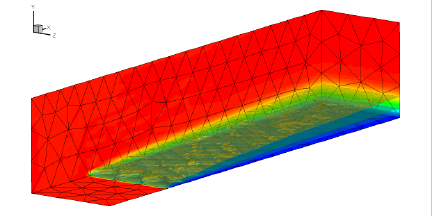}
    \caption{Blasius boundary layer: $3D$ plot of the domain $\Omega$ and sketch of the mesh on the boundary; the plotted iso-surfaces are corresponding to $u=0.2,0.4,0.8$}
    \label{fig.B.1}
		\end{center}
\end{figure}


\begin{figure}[ht]
    \begin{center}
		\includegraphics[width=0.38\textwidth]{./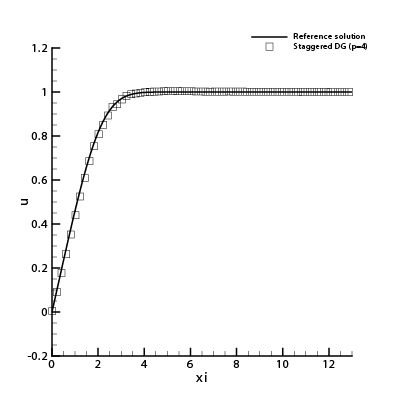}
    \caption{Blasius boundary layer: numerical solution and reference solution taken on the line $(x,y,z)=(0.4,y,0)$.} 
    \label{fig.B.3}
		\end{center}
\end{figure}

The resulting Blasius velocity profile is shown in Figure $\ref{fig.B.1}$ where also a sketch of the grid is reported. 
A comparison between the numerical results presented here and the Blasius solution is depicted in Figure $\ref{fig.B.3}$. 
A very good agreement between numerical and reference solution can be observed, which is quite remarkable, if we take into account the mesh size and considering that 
the major part of the boundary layer is essentially resolved in only one single control volume. 

\subsection{Backward-facing step.}
In this section, the three-dimensional numerical solution for the fluid flow over a backward-facing step is considered. For this test problem, both experimental and numerical results are available at several Reynolds numbers, see e.g. \cite{Armaly1983,Erturk2008}. In particular, it is known that two dimensional simulations are in good agreement with experimental evidence only up to $Re=400$. Beyond this critical value, two dimensional simulations present a large secondary recirculation zone that reduces the main recirculation zone. On the contrary, experimental results show that this secondary vortex appears only at higher Reynolds number due to three-dimensional effects (see e.g. \cite{Armaly1983}). The used step size is of $S=0.49$ and the ratio between the total height $H$ and the inlet height $h_{in}$ is of $H/h_{in}=1.9423$. We consider here a smaller domain with respect to the experimental setup of Armaly in \cite{Armaly1983}, but sufficient to see the three-dimensional effects. In particular $\frac{x}{S}\in [-10,20]$, $y\in [-0.49,0.51]$ and $\frac{z}{S}\in [0,12]$. The domain is covered using $\Ni=19872$ terahedral elements and we take $(p,p_\gamma)=(4,0)$ and $Re=600$. We impose the exact Poiseuille profile in the $y$-direction at the tube inlet, transmissive boundary conditions at the tube outlet and no-slip boundary conditions otherwise. For the current test $\Delta t$ is taken according to the CFL time restriction based on the magnitude of the flow velocity and $t_{end}=80$.

\begin{figure}[ht]
    \begin{center}
    \includegraphics[width=0.7\textwidth]{./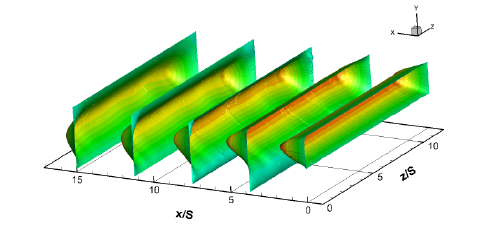} \\
    \caption{3D backward facing step. Value of $u$ in the $(y,z)$-plane at $x=[0,3.75,7.5,11.25,15]$.}
    \label{fig.BFS.1}
		\end{center}
\end{figure}

A plot of the velocity profile at several values of $x/S$ is shown in Figure $\ref{fig.BFS.1}$. 
The resulting recirculation zones in the symmetry plane and close to the side wall $\frac{z_{max}}{S}$ are shown in Figure $\ref{fig.BFS.2}$, as well as the equivalent in the plane $(\frac{x}{S},\frac{z}{S})$ close to the bottom and the top wall in Figure $\ref{fig.BFS.3}$. As we can see, no important secondary recirculation zones appear in the symmetric plane, while a couple of recirculations appear close to the side walls. The presence of these secondary recirculations seem to reduce the reattachment point for the main recirculation close to the side walls (see Figure $\ref{fig.BFS.3}$ top). On the contrary, a larger recirculation zone can be seen in the middle of the channel. The resulting reattachment point in the symmetry plane is $\frac{x_1}{S}=11.2$, that is really close to the one obtained in  the experimental case, whose value is $\frac{x_1}{S}=11.24$. Note that the two dimensional numerical simulation, as presented in \cite{2SINS}, leads to a reattachment point of $\frac{x_1}{S}=9.4$, 
which completely underestimates the experimental one.

\begin{figure}[ht]
    \begin{center}
    \includegraphics[width=0.7\textwidth]{./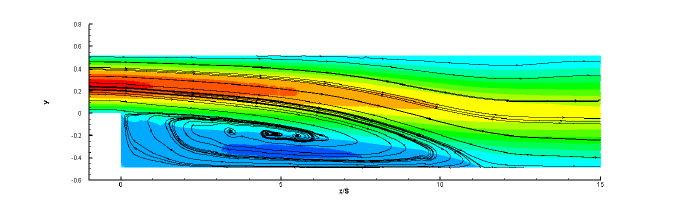} \\
		\includegraphics[width=0.7\textwidth]{./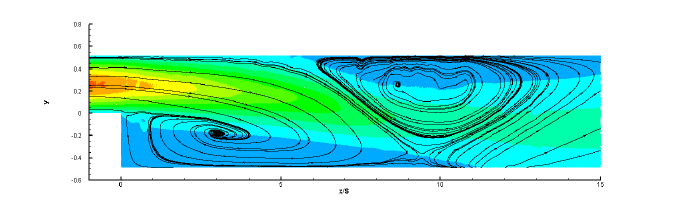} 
    \caption{3D backward facing step. Recirculation zones in the plane $(\frac{x}{S},y)$  in the symmetry plane (top) and close to the side wall at $\frac{z}{S}=12$ (bottom). }
    \label{fig.BFS.2}
		\end{center}
\end{figure}

\begin{figure}[ht]
    \begin{center}
    \includegraphics[width=0.7\textwidth]{./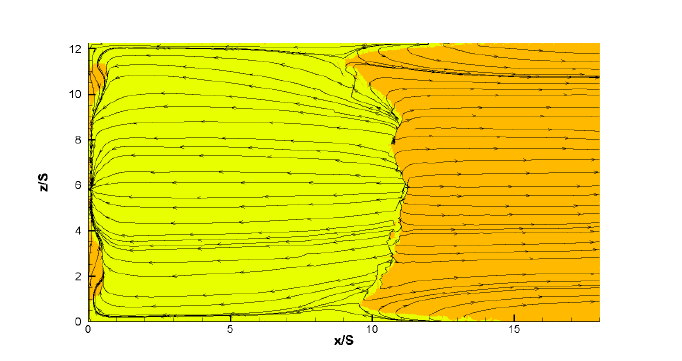} \\
		\includegraphics[width=0.7\textwidth]{./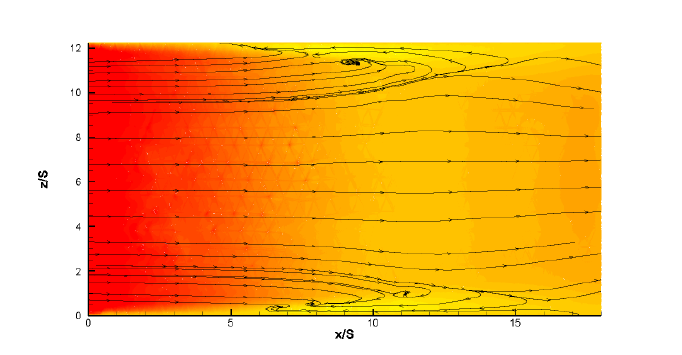} 
    \caption{3D backward facing step. Recirculation zones in the plane $(\frac{x}{S},\frac{z}{S})$ close to the bottom and close to the top wall. }
    \label{fig.BFS.3}
		\end{center}
\end{figure}

\subsection{Flow around a sphere}
In this section we consider the flow around a sphere. In particular we take as computational domain $\Omega=\mathcal{S}_{10} \cup \mathcal{C}_{10,15}-\mathcal{S}_{0.5}$, where $\mathcal{S}_r$ is a generic sphere with center $\vec{0}$ and radius $r$; $\mathcal{C}_{r,H}$ is a cylinder with circular basis on the $yz$-plane, radius $r$ and height $H$. We use a very coarse grid that is composed by a total number of $\Ni=14403$ tetrahedra whose characteristic length is $h=0.2$ close to the sphere, while it is only $h=0.8$ away from the sphere. A sketch of the grid is shown in Figure $\ref{fig.Sph.1}$.

\begin{figure}[ht]
    \begin{center}
    \includegraphics[width=0.6\textwidth]{./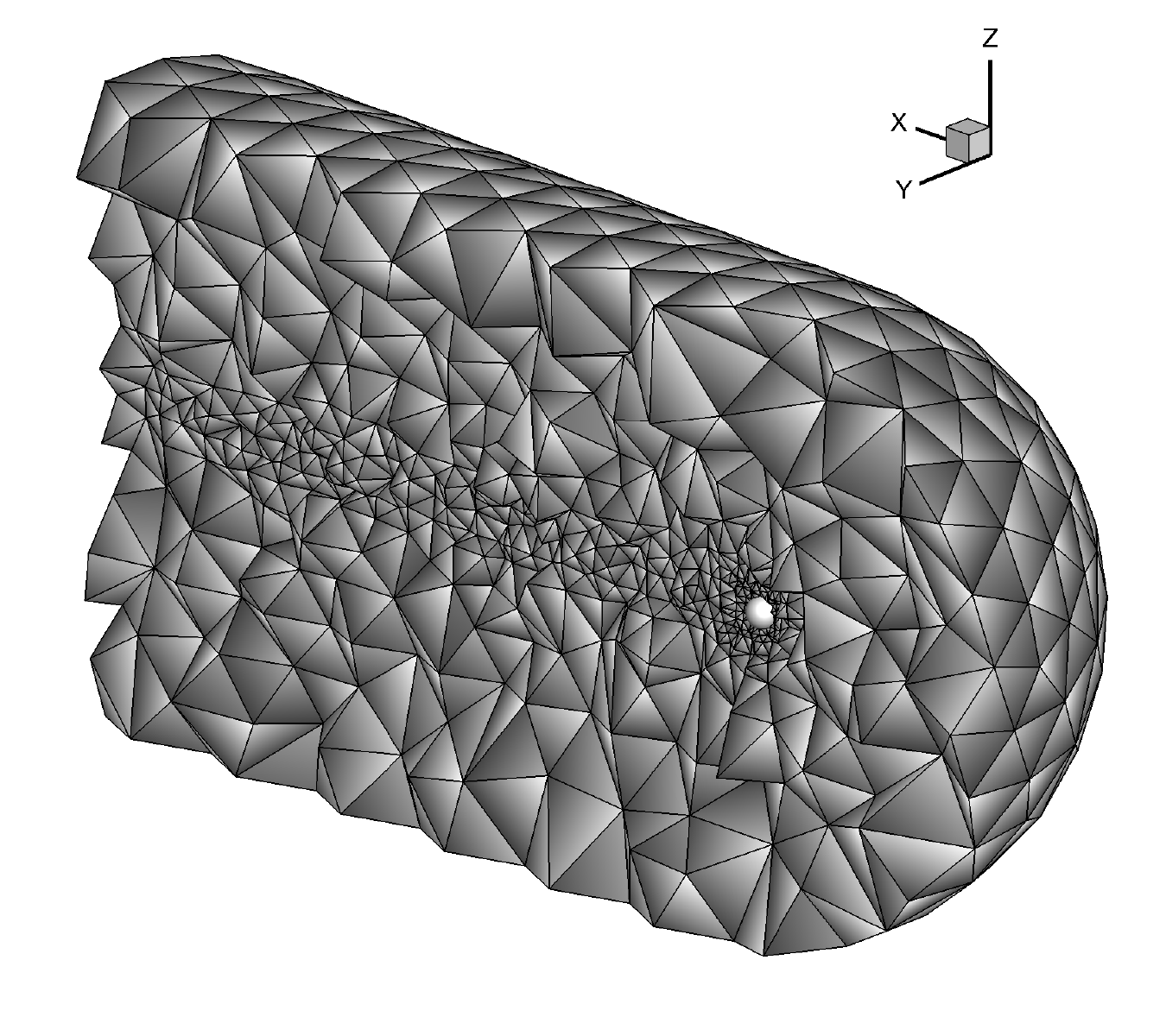}
    \caption{Flow around a sphere. Cut view of the computational domain with $\Ni=14403$.}
    \label{fig.Sph.1}
		\end{center}
\end{figure}

We start from an initial steady flow of magnitude $\mathbf{v}_0=(u_\infty,0,0)$ with $u_\infty=0.5$ and we impose $u_\infty$ on $\mathcal{S}_{10}\cap \{x\leq 0\}$ as boundary condition; transmissive boundary condition on $\mathcal{C}_{10,15}$ and no-slip condition on $\mathcal{S}_{0.5}$. We use a polynomial degree $(p,p_\gamma)=(3,0)$ and $\theta=0.51$ using the method explained in section $\ref{sec.CNmethod}$; $Re=300$; $t_{end}=300$ and $\Delta t$ is taken according to the CFL time restriction for the convective term. 

A plot of the spanwise velocity contours for $v$ is reported in Figure $\ref{fig.Sph.theta.1}$ at $t=300$ and shows a very complex and three-dimensional behavior of the numerical solution. 
The mean number of iterations needed to solve the pressure system with a tolerance of $tol=10^{-8}$ is $I_{mean}=201.8$ for this test problem. The maximum number of iterations is 
$I_{max}=2552$ and is observed only at the beginning of the simulation, when the constant initial condition for the velocity has to be adjusted. Instead, the minimum number of iterations 
$I_{min}=62$ is observed when the Von Karman vortex street is fully developed. 

\begin{figure}[ht]
    \begin{center}
		\includegraphics[width=0.8\textwidth]{./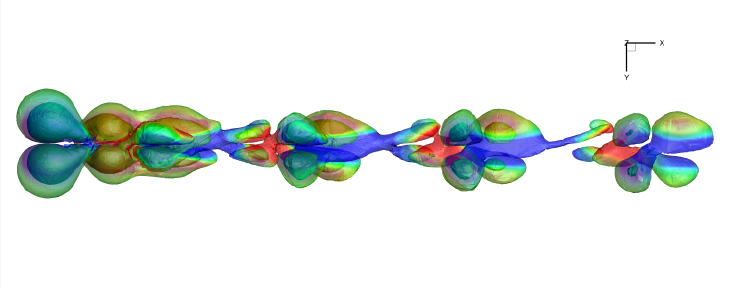}
		\includegraphics[width=0.8\textwidth]{./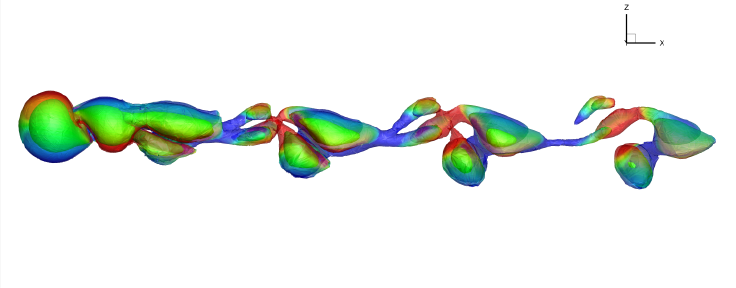}
		\includegraphics[width=0.8\textwidth]{./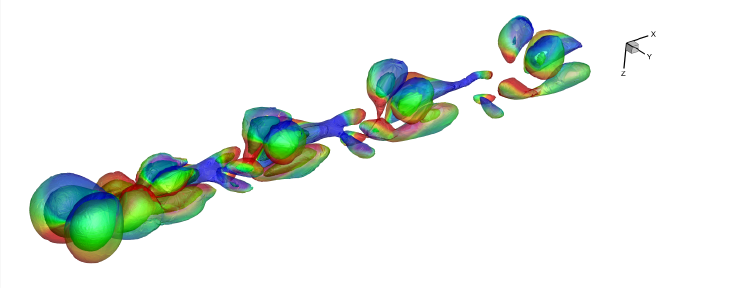}
    \caption{Flow around a sphere. Contour isosurfaces for the spanwise velocity $v$ in the $(x,y)$ plane, in the $(y,z)$ plane and 3D plot.}
    \label{fig.Sph.theta.1}
		\end{center}
\end{figure}


\begin{figure}[ht]
    \begin{center}
		\includegraphics[width=0.8\textwidth]{./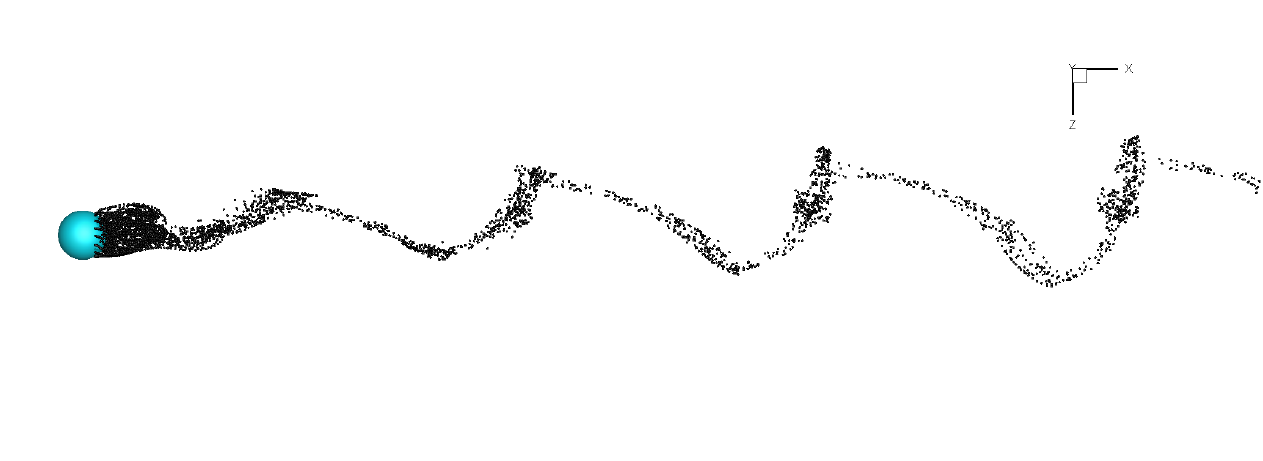}
		\includegraphics[width=0.8\textwidth]{./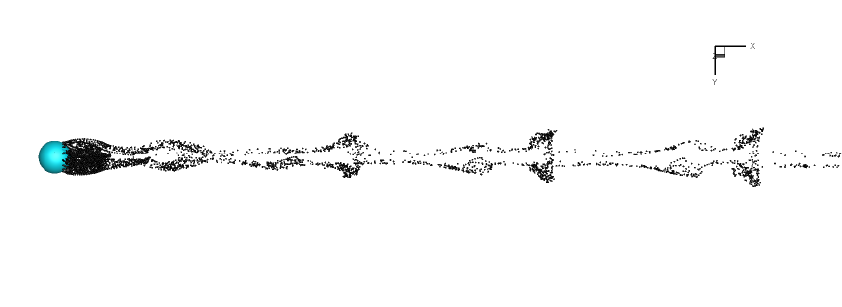}
    \caption{Flow around a sphere. Side view and upper view of the particle path at $t=300$.}
    \label{fig.Sph.theta.2}
		\end{center}
\end{figure}


\begin{figure}[ht]
    \begin{center}
		\includegraphics[width=0.7\textwidth]{./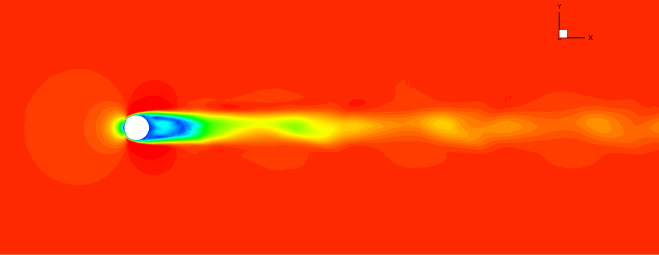}
		\includegraphics[width=0.7\textwidth]{./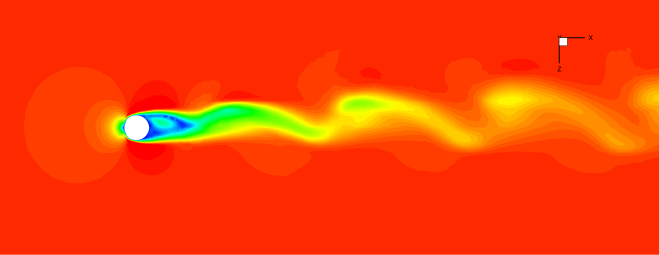}
    \caption{Flow around a sphere. Velocity magnitude at $t=t_{end}$ in the $(x,y)$ and $(x,z)$-plane.}
    \label{fig.Sph.theta.4}
		\end{center}
\end{figure}

A lateral and upper view of a particle tracer is reported in Figure $\ref{fig.Sph.theta.2}$ at $t=300$. The obtained results look very similar to the experimental ones 
obtained by H.Sakamoto et al. in \cite{Sakamoto1990}. The resulting Strouhal number for this simulation is $St=0.145$, which is close to the experimental range 
$St=0.15-0.18$ obtained in \cite{Sakamoto1990}.

\subsection{3D flow past a circular cylinder}
In this last test case we want to treat another classical problem for the incompressible Navier-Stokes equations that is the 3D flow around a circular cylinder. For this test, some numerical and experimental cases are available for a large range of Reynolds numbers. In particular several papers focus the attention on the formation of two instability modes characterized by large and small-scale streamwise vortex structures (see e.g. \cite{Williamson1988}), which act on the Reynolds-Strouhal number relationship. We consider here the problem of the flow past a circular cylinder in a confined channel and for a Reynolds number large enough to have three-dimensional effects and small-scale streamwise vortex structures. We define the blockage ratio $\beta=d/H$ where $d$ indicates the cylinder diameter and $H$ is the distance separating the two walls. In \cite{Rehimi2008} an experimental investigation for a blockage ratio of $\beta=1/3$ was presented, producint the 
$Re-St\cdot Re$ relation up to $Re=277$. Other numerical studies of Kanaris et al in \cite{Kanaris2011} give us a numerical analysis in the case of lower blockage ratio of $\beta=1/5$, finding a similar relation with respect to the unconfined experimental case of Williamson in \cite{Williamson1988}. 
We consider here two domains that are $\Omega_1=[-10,30]\times[-2.5,2.5]\times [-12,12]-C_{0.5,24}$ and $\Omega_2=[-10,30]\times[-1.5,1.5]\times [-12,12]-C_{0.5,24}$ where $C_{r,z}$ represents the cylinder of of radius $r$ and height $z$ centered in $0$ and corresponding to a blockage ratio of  $\beta=1/5$ and $\beta=1/3$, respectively. The first domain $\Omega_1$ is covered with a total number of $\Ni=50761$ tetrahedra and $\Omega_2$ is covered with $\Ni=32527$ elements. A sketch of the grid used in both the cases is shown in Figure $\ref{fig.Cyl.1}$.
\begin{figure}[ht]
    \begin{center}
		\includegraphics[width=0.45\textwidth]{./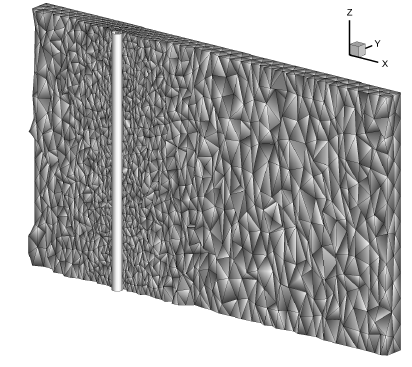}
		\includegraphics[width=0.45\textwidth]{./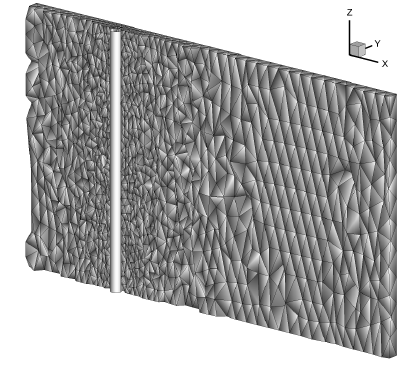}
    \caption{Flow around a cylinder. Half grid plot of $\Omega_1$ (left) and $\Omega_2$ (right).}
    \label{fig.Cyl.1}
		\end{center}
\end{figure}
As numerical parameters we use $(p,p_\gamma)=(3,0)$, $\theta=0.51$ and $t_{end}=200$. As initial condition we take a fully developed laminar Poiseuille profile and we impose velocity boundary conditions  on the inlet, transmissive boundary conditions on the outlet and no slip boundary conditions otherwise. Finally we impose for the two tests $\nu_1=1.66667 \cdot 10^{-3}$ and $\nu_2=1.80505 \cdot 10^{-3}$ corresponding to $Re_1=300$ and $Re_2=277$. Furthermore, isoparametric elements are considered for both the cases in order to fit better the curved cylinder. The resulting velocity contours 
at $t_{end}$ are reported in Figure $\ref{fig.Cyl.3}$, where we can observe the generation of the Von Karman vortex street past the cylinder, as well as the three-dimensional mixing effects given by the spanwise velocity 
$w$. 


\begin{figure}[ht]
    \begin{center}
		\includegraphics[width=0.45\textwidth]{./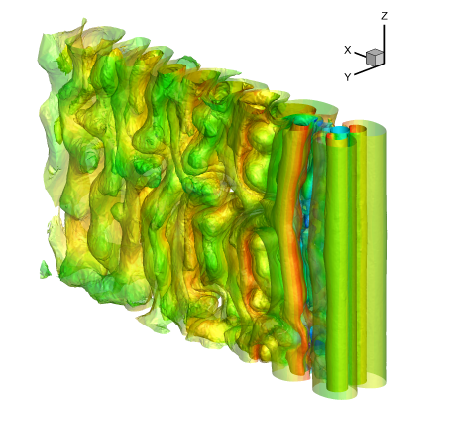}
		\includegraphics[width=0.45\textwidth]{./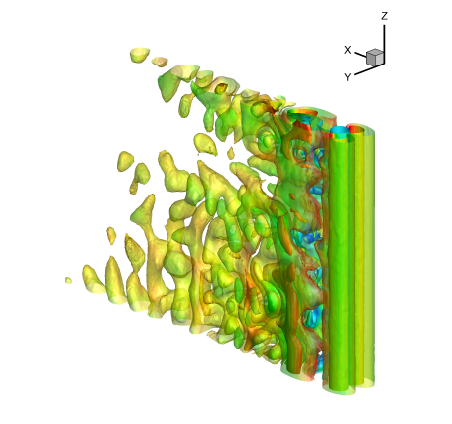}
		\includegraphics[width=0.45\textwidth]{./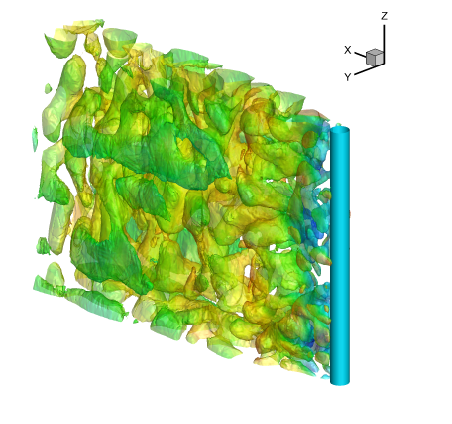}
		\includegraphics[width=0.45\textwidth]{./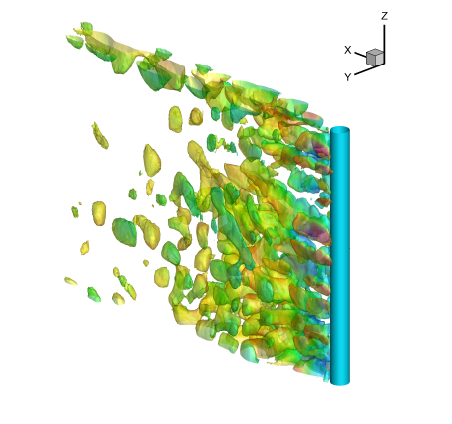}
    \caption{Flow around a cylinder. Isosurfaces of spanwise velocity $v=[\pm 0.1, \pm 0.03]$ and $w=\pm 0.03$ from top to bottom for the case $(Re,\beta)=(300,\frac{1}{5})$ (left column) and $(Re,\beta)=(277,\frac{1}{3})$ (right column). }
    \label{fig.Cyl.3}
		\end{center}
\end{figure}

The resulting Strouhal number for the first case is $St=0.198$ which is in good agreement with the numerical one $St=0.1989$ of Kanaris in \cite{Kanaris2011} and the experimental one of Williamson in \cite{Williamson1988}. In the second case the obtained Strouhal number is $St=0.2414$, which corresponds to a value of $St \cdot Re=66.877$ that is in line with the experimental one of Rehimi et al. in \cite{Rehimi2008}, whose extrapolated value 
is $St \cdot Re=66.929$. This confirms the suggestion given in \cite{Rehimi2008} that the Strouhal number increases with increasing blockage. 

\section{Conclusions}
\label{sec.concl}

In this paper we have proposed a new arbitrary high order accurate space-time DG method on \textit{staggered} unstructured tetrahedral meshes for the solution of the incompressible Navier-Stokes 
equations in three space dimensions. The key idea of our approach is indeed the use of a \textit{staggered mesh}, where the pressure is defined on the main tetrahedral grid, while the velocity 
is defined on a face-based staggered dual mesh, composed of non-standard five-point hexahedral elements. To avoid the solution of nonlinear systems due to the presence of the nonlinear convective 
terms, we opt for a semi-implicit discretization in combination with an outer Picard iteration, leading to a rather simple space-time pressure correction algorithm. 
To the knowledge of the authors, this is the first time that a \textit{staggered} space-time DG scheme has been proposed for the 3D incompressible Navier-Stokes equations on unstructured 
tetrahedral meshes. 

The use of a staggered grid follows the ideas of classical finite difference schemes for the incompressible Navier-Stokes equations, but it is not yet very widespread in the 
DG community. However, it allows to produce a linear system to be solved in each time step with the smallest number of unknowns (only the scalar pressure) and with the smallest possible 
stencil (5-point stencil). 
The same DG algorithm on a \textit{collocated mesh} would either lead to a 17-point stencil (if the pressure is used as the only unknown, substituting the momentum equation into the  
continuity equation), or to a four times larger linear system with pressure and velocity as unknowns (if a 5-point stencil is used, hence \textit{not} substituting the momentum equation 
into the continuity equation). 
In the special case of piecewise constant polynomials in time ($p_\gamma=0$), the final system matrix becomes even symmetric and positive definite for appropriate boundary conditions, 
thus allowing the use of the conjugate gradient method. In all test cases shown in this paper, the pressure system could be solved with a simple matrix-free version of the GMRES/CG  
method, without the use of any preconditioner. 
In addition, all the coefficient matrices needed by the scheme can be precomputed and stored in a preprocessing step. In this way also the extension to high order isoparametric geometry becomes 
natural and does not affect the computational effort during run time. 
The staggered DG approach further allows to avoid the use of numerical flux functions (Riemann solvers) in the scheme, since all quantities are readily defined where they are needed, 
apart from the nonlinear convective terms, which are treated in a classical manner.  

The new numerical method has been applied to a large set of different steady and unsteady benchmark problems. It has been shown that the method achieves high order of accuracy in both, space and time, allowing thus the use of very coarse meshes in space and the use of very large time steps, without compromising the overall accuracy of the method. 

Future work will include the extension of the proposed staggered space-time DG method to the compressible Navier-Stokes equations in order to produce a novel family of 
all Mach number flow solvers, similar to the ideas proposed in \cite{CasulliCompressible,klein,munz2003,KleinMach,MeisterMach,MunzPark,CordierDegond,DumbserIbenIoriatti,DumbserCasulli2016} 
in the context of semi-implicit finite difference and finite volume schemes for compressible flows.

\section*{Acknowledgments}
The research presented in this paper was partially funded by the European Research Council (ERC) under the European Union's Seventh Framework
Programme (FP7/2007-2013) within the research project \textit{STiMulUs}, ERC Grant agreement no. 278267.
\clearpage
\bibliographystyle{elsarticle-num}
\bibliography{SIDG}

\end{document}